\bfseries\color{green},
\DeclareFontFamily{OT1}{pzc}{}
\DeclareFontShape{OT1}{pzc}{m}{it}{<-> s * [1.100] pzcmi7t}{}
\DeclareMathAlphabet{\mathpzc}{OT1}{pzc}{m}{it}
\newtheorem{theorem}{Theorem}
\newtheorem{lemma}[theorem]{Lemma}
\newtheorem{corollary}[theorem]{Corollary}
\newtheorem{remark}[theorem]{Remark}
\newtheorem{comment}[theorem]{Comment}
\newtheorem{definition}[theorem]{Definition}
\newtheorem{proposition}[theorem]{Proposition}
\newtheorem{example}[theorem]{Example}
\newcommand{\be}{\begin{equation}}
\newcommand{\ee}{\end{equation}}
\newcommand{\beq}{\begin{eqnarray}}
\newcommand{\eeq}{\end{eqnarray}}
\newcommand{\beqs}{\begin{eqnarray*}}
\newcommand{\eeqs}{\end{eqnarray*}}
\newcommand{\bt}{\begin{theorem}}
\newcommand{\et}{\end{theorem}}
\newcommand{\bex}{\begin{example}}
\newcommand{\eex}{\end{example}}
\newcommand{\br}{\begin{remark}}
\newcommand{\er}{\end{remark}}
\newcommand{\bc}{\begin{corollary}}
\newcommand{\ec}{\end{corollary}}
\newcommand{\bl}{\begin{lemma}}
\newcommand{\el}{\end{lemma}}
\newcommand{\bp}{\begin{proposition}}
\newcommand{\ep}{\end{proposition}}
\newcommand{\bd}{\begin{definition}}
\newcommand{\ed}{\end{definition}}
\newcommand{\bas}{\begin{assumption}}
\newcommand{\eas}{\end{assumption}}
\newcommand{\R}{\mathbb{R}}
\newcommand{\N}{\mathbb{N}}
\def\0{{\bf 0}}
\def\dist{\operatorname{dist}}
\renewcommand{\div}{\operatorname{div}}
\newcommand{\MCH}{{\bf M-CH}}
\newcommand{\NMNCH}{{\bf NMN-CH}}
\newcommand{\MbSAV}{{\bf Mb-SAV}}
\title{
A mobility-SAV approach for a Cahn-Hilliard equation\\ with degenerate mobilities
}
\author{Elie Bretin}
\address{Univ Lyon, INSA de Lyon, CNRS UMR 5208, Institut Camille Jordan\\ 20 avenue Albert Einstein, F-69621 Villeurbanne, France\\ elie.bretin@insa-lyon.fr}
\author{Luca Calatroni}
\address{CNRS, Universit\'e C\^ote d'Azur, Inria, UMR 7271, Laboratoire I3S,\\ 2000 Route des Lucioles, 06903, Sophia-Antipolis, France\\ calatroni@i3s.unice.fr}
\author{Simon Masnou}
\address{Univ Lyon, Universit\'e Claude Bernard Lyon 1, CNRS UMR 5208, Institut Camille Jordan \\43 boulevard du 11 novembre
	1918, F-69622 Villeurbanne, France\\masnou@math.univ-lyon1.fr}
\subjclass{74N20, 35A35, 53E10, 53E40, 65M32, 35A15}
\keywords{Phase field approximation, scalar auxiliary variable (SAV), Cahn--Hilliard equation, surface diffusion, degenerate mobilities, numerical approximation}
\date{\today}
\begin{document} 
\begin{abstract}
A novel numerical strategy is introduced for computing approximations of solutions to a Cahn-Hilliard model with degenerate mobilities. This model  has recently been introduced as a second-order phase-field approximation for surface diffusion flows. Its  numerical discretization  is challenging due to the degeneracy of the mobilities, which generally requires an implicit treatment to avoid stability issues at the price of increased complexity costs. To mitigate this drawback, we consider new first- and second-order Scalar Auxiliary Variable (SAV) schemes that, differently from existing approaches, focus on the  relaxation of the mobility, rather than the Cahn-Hilliard energy. These schemes are introduced and analysed theoretically in the general context of gradient flows and then specialised for the Cahn-Hilliard equation with mobilities. Various numerical experiments are conducted to highlight the advantages of these new schemes  in terms of accuracy, effectiveness and computational cost.
\end{abstract}
\maketitle

\section{Introduction}

The classical mathematical model for the surface diffusion flow of the boundary $\partial\Omega(t)$ of a domain $\Omega(t)\subset\R^N$ stipulates that the evolution of $\partial\Omega(t)$ is governed by the law
\[
 V(t) =  \Delta_{\partial \Omega(t)} H_{\partial \Omega(t)},\qquad \Omega(0)=\Omega_0,
 \]
where $V(t)$ is the normal velocity, $H_{\partial \Omega(t)}$ is the (scalar) mean curvature and $\Delta_{\partial \Omega(t)}$ the Laplace-Beltrami operator on $\partial \Omega(t)$. We use the orientation convention that the scalar mean curvature along the boundary of a convex domain is non negative and that $V(t)$ is positive if $\Omega(t)$ grows.

Surface diffusion flow can be interpreted as the $\dot{H}^{-1}$-gradient flow of the perimeter energy
\begin{equation}  \label{eq:perimeter}
P(\Omega(t)) = \int_{\partial \Omega(t)} d{\mathcal H}^{N-1},
\end{equation}
with ${\mathcal H}^{N-1}$ the $(N-1)$-dimensional Hausdorff measure.

Classical phase field models to approximate surface diffusion flow are based on the Cahn-Hilliard energy 
\begin{equation} \label{eq:CahnHi}
 P_{\varepsilon}(u) =   \int_{\R^N}\frac{\varepsilon}{2} | \nabla u |^2 + \frac{1}{\varepsilon} W(u)\  dx
 \end{equation}
that approximates the perimeter in the sense of $\Gamma$-convergence \cite{Modica1977,Modica1977a}. Here, $\varepsilon>0$ is a small parameter that quantifies the sharpness of the approximation and $W$ is a reaction potential, typically a smooth double-well function such as
\begin{equation} \label{eq:doubleW}
	W(s) = \frac{1}{2}s^2 (1-s)^2.
	\end{equation}
The Cahn-Hilliard functional \eqref{eq:CahnHi} involves two competing terms: as $\varepsilon$ goes to $0$, the reaction term $\frac{1}{\varepsilon} W(u)$ forces $u$ to take values in $\{0,1\}$ (if $W$ is chosen as in \eqref{eq:doubleW}) while the gradient term $\frac{\varepsilon}{2}| \nabla u |^2 $ promotes smoothness in the transition zone between $\{u=0\}$ and $\{u=1\}$. Therefore, any minimizer $u_\varepsilon$ of $P_{\varepsilon}$ is a smooth approximation of the characteristic function of a set $\Omega$ and $\{u =\frac 1 2\}$ approximates $\partial\Omega$.

Sharp surface diffusion flow is the $\dot{H}^{-1}$-gradient flow of the perimeter energy and  the Cahn-Hilliard energy is a good approximation of the perimeter in the sense of $\Gamma$-convergence, which has consequences for the convergence of global minimizers. It is therefore rather legitimate to take as a phase field model for surface diffusion flow the $\dot{H}^{-1}$-gradient flow of the Cahn-Hilliard energy, that is the Cahn-Hilliard equation.
\begin{equation}\label{eq:CH_eq}
\begin{cases}
    \partial_t u  &= \Delta \mu \\
    \mu &=  - \Delta u + \frac{1}{\varepsilon^2} W'(u)
   \end{cases}
\tag{\textbf{CH}}
\end{equation}
This equation has been widely used, since the seminal work of Cahn and Hilliard~\cite{cahnspin,CAHN:1958}, to model the dynamics of phase separation in many different contexts, see the examples and the references listed in the survey~\cite{novick2008cahn}, see also the recent book~\cite{bookMiranville} where state-of-the-art results and numerous applications are presented. Among them, we briefly mention recent studies on the use of such equation fo model tumour growth \cite{Colli2015} and transmission problems \cite{Colli2020}. 

%

However, although used for this purpose, it has been proved by different authors \cite{pego1989front,alikakos1994convergence} that the solutions to the Cahn-Hilliard equation do not converge, as $\varepsilon$ goes to $0$, to surface diffusion flows but rather to solutions of a non-local Hele-Shaw model. Fortunately, as we will see now, it is possible to modify the equation in order to recover in the limit the correct flow.

 \subsection{A Cahn-Hilliard model with a single degenerate mobility}
In \cite{cahn1996cahn}, Cahn et al extended \eqref{eq:CH_eq} to accommodate a concentration-dependent mobility that cancels out in pure state regions, i.e. prevents any motion there. Because it has this cancellation property, this mobility is often referred to in literature as {\it degenerate}. Cahn et al's model, that we call the \MCH{} model, can be written as follows:
\begin{equation}   \label{eq:MCH}
\left\lbrace
\begin{aligned}
& \partial_t u = \div\left(M(u)\nabla\mu\right),\\
& \mu = - \Delta u  + \frac{W'(u)}{\varepsilon^2}.
\end{aligned}
\right.  \tag{\textbf{M-CH}}
\end{equation} 
With suitable assumptions on $M$ and $W$, a formal convergence proof to the correct motion, i.e. the surface diffusion flow, is shown in  \cite{cahn1996cahn}. However, the particular model studied there involves a logarithmic potential $W$, which raises several issues from a numerical viewpoint. The double well potential  \eqref{eq:doubleW} is more convenient from a numerical perspective, we will consider it in this work.

The choice of a suitable mobility $M$ has been discussed theoretically in several works, see, e.g.,  \cite{gugenberger2008comparison,lee2015degenerate,MR3466205}. By formal asymptotic arguments it can be showed that the choice $M(s)=s(1-s)$ does not lead to the correct velocity as an additional bulk diffusion term appears in the dynamics. These conclusions are  corroborated numerically in \cite{dai2012motion,dai2014coarsening,MR3457961} where undesired coarsening effects are observed. Actually a quartic mobility such as $M(s)= 36s^2(1-s)^2$
is necessary to recover the correct velocity, see also the extension to the anisotropic case in~\cite{Dziwnik2019}. From now on, we will use $M(s)= 36s^2(1-s)^2$.

Although the \eqref{eq:MCH} model captures successfully the sharp interface limits for suitable choices of $M$ and $W$, and gives satisfactory numerical results, it does suffer from a well-known drawback. In the asymptotic regime, the leading error term of the model is of order $1$ with the solution $u_{\varepsilon}$ being of the form
$$u_\varepsilon(t) = q\left( \frac{\dist(x,\Omega_\varepsilon(t))}{\varepsilon} \right) + \mathcal{O}(\varepsilon), \quad\text{where } \Omega_\varepsilon(t) = \{ x,\, u_{\varepsilon}(x,t) \geq 1/2 \} $$ 
Here, $q$ is the optimal profile associated with the parameter free 1d Cahn-Hilliard energy and $\dist(\cdot,\Omega_\varepsilon(t))$ is the signed distance function to $\Omega_\varepsilon(t)$ (negative inside, positive outside). When $W$ is chosen as in \eqref{eq:doubleW}, $q$ satisfies the explicit formula
$$q(s) = \frac{1-\tanh(\frac{s}{2})}{2}.$$

The approximation by \eqref{eq:MCH}, being of first order only, presents challenges when approaching the pure states $0$ or $1$ due to the emergence of undesired oscillations and imprecise solution profiles. Furthermore, as demonstrated in \cite{bretin2020approximation,refId0}, this approximation leads to numerical volume losses, contradicting the inherent volume-preserving nature of the Cahn-Hilliard equation.

The authors of \cite{ratz2006surface} managed to improve the numerical accuracy by introducing  another degeneracy in the model. It has been successfully adapted in various applications,  see for example \cite{albani2016dynamics,naffouti2017complex,salvalaglio2017morphological,salvalaglio2015faceting}.  However, the proposed model does not derive from an energy, it is thus more difficult to prove rigorously theoretical properties and to extend to complex multiphase applications. A variational adaptation has been proposed in \cite{salvalaglio2019doubly} where the second degeneracy is injected in the energy. But because it relies on modifying the energy, the approach is hard to extend to complex multiphase or anisotropic  applications.

\subsection{A second-order Cahn-Hilliard model with two degenerate mobilities} \label{sec:doubly_mob}

A different approach is proposed in  \cite{bretin2020approximation,refId0} where another mobility $N$, in addition to $M$,
is incorporated in the metric used to define the gradient flow, thus possibly changing the overall geometry of the evolution problem.
The so-called \NMNCH{} model proposed in \cite{bretin2020approximation,refId0} reads
\begin{equation}  \label{eq:NMN_CH}
\begin{cases}
   \partial_t u &= N(u) \div\left(M(u) \nabla (N(u)\mu) \right), \\
 \mu &= - \Delta u + \frac{W'(u)}{\varepsilon^2}.\\
  \end{cases} 
\tag{\textbf{NMN-CH}}
\end{equation}
This equation, considered on a bounded open domain $Q\subset\R^N$ with smooth boundary, derives from the weighted $\dot{H}^{-1}$ gradient flow of the Cahn-Hilliard energy associated with the weighted $\dot{H}^1$ scalar product 
$$\langle f,g\rangle_{\dot{H}^1}=\int_QM\nabla(Nf)\cdot\nabla(Ng)\ dx$$
complemented with the introduction of a dependence on $u$ of $M$ and $N$.
Using formal asymptotic expansion, it is shown in~\cite{bretin2020approximation,refId0} that with
$$
M(s) =  s^2 (1-s)^2 + \varepsilon^2 \quad\text{ and }\quad  N(s) = \frac{1}{\sqrt{M(s)}},
$$
the error term of order $1$ in the solution cancels out, and the \eqref{eq:NMN_CH} model is of order~$2$.
The profile obtained for the solution $u_\varepsilon$ to \eqref{eq:NMN_CH} is therefore more accurate, it satisfies 
$$u_\varepsilon(t) = q\left( \frac{\dist(x,\Omega_\varepsilon(t))}{\varepsilon} \right) + \mathcal{O}(\varepsilon^2).$$ 
Volume conservation is thus ensured up to an error of order $2$, in comparison with the order $1$ observed for \eqref{eq:MCH}.

However, the numerical approximation of \eqref{eq:NMN_CH} raises a number of difficulties, particularly with regard to $N$ and $M$ mobilities, which can take on very different values: in regions corresponding to pure phases ($u=1$ or $u=0$), $M(u)$ is very close to $0$, while $N(u)\sim 1/\epsilon$. The convex-concave splitting scheme used in~\cite{bretin2020approximation} after \cite{MR1676409} overcomes these difficulties and gives good numerical results. But beyond the good empirical performance of the scheme, a rigorous proof of convergence and stability is missing.\\

The objective of this paper is to design a novel numerical strategy based on the 
{\it Scalar Auxiliary Variable} (SAV) method \cite{HOU2019307,SHEN2018407,doi:10.113717M1150153,doi:10.113719M1264412,wang2022application,huang2023structurepreserving}  for the effective treatment of the mobility terms. This approach offers the advantage of generating provable unconditionally stable schemes (i.e., stable for all time steps $\delta t$) with order one or two, see Section \ref{sec:SAV_review} for more details.

\paragraph{Structure of the paper.}
In Section \ref{sec:numerical_strat}, we review some numerical schemes classically considered to approximate numerically the solutions of the Cahn-Hilliard model.  We recall in particular both the popular convex-concave and the SAV schemes in the homogeneous case (that is, without mobility) but explain also the difficulties encountered when applying this strategy to solve models \eqref{eq:MCH} and \eqref{eq:NMN_CH}. 
In Section \ref{sec:grad_flow} we introduce and analyse two SAV schemes incorporating mobilities in the general context of gradient flows of a convex function with respect to a suitable energy . The numerical schemes obtained are simple and endowed with unconditional stability and accuracy properties, unlike standard convex-concave schemes used, e.g., in \cite{bretin2020approximation,refId0}, to solve models  \eqref{eq:MCH} and \eqref{eq:NMN_CH}. 
The final Section \ref{sec:applic_CH} focuses on the application of these schemes to Cahn-Hilliard models endowed in the inhomogeneous case. The effectiveness of the schemes is verified through various numerical experiments.

\section{Numerical strategies for Cahn-Hilliard flows}  \label{sec:numerical_strat}

We recall some classical numerical methods to discretize the Cahn-Hilliard model in the case of an homogeneous mobility ($M \equiv 1$) in \eqref{eq:CH_eq}.
First, we define:
\begin{equation}  \label{eq:P_epsilon}
 P_{\varepsilon}(u) =  \int_Q \frac{\varepsilon}{2} | \nabla u |^2 + \frac{1}{\varepsilon} W(u) dx.
\end{equation}
and assume that all equations are solved on a square-box $Q = [0,\ell_1]\times \cdots \times [0,\ell_d]$ with periodic boundary conditions. Thanks to the homogeneity of the operators involved fast semi-implicit Fourier spectral method in the spirit  of \cite{Chen_fourier,Bretin_brassel,bretin_droplet,bretin_largephases,BRETIN2018324} can be used, see also  \cite{DU2020425} for a recent review of numerical methods for the phase field approximation of various geometric flows. \\
We recall that the Fourier $\boldsymbol K$-approximation of a function $u$ defined in 
$Q $ is given by
$$u^{\boldsymbol K}(x) = \sum_{{\boldsymbol k}\in K_N  } c_{\boldsymbol k} e^{2i\pi{\boldsymbol \xi}_k\cdot x},$$
where  $K_N =  [ -\frac{N_1}{2},\frac{N_1}{2}-1 ]\times [ -\frac{N_2}{2},\frac{N_2}{2}-1] \cdots \times   [ -\frac{N_d}{2},\frac{N_d}{2}-1] $,   ${\boldsymbol k} = (k_1,\dots,k_d)$ and ${\boldsymbol \xi_k} = (k_1/\ell_1,\dots,k_d/\ell_d)$. In this formula, the coefficients $c_{\boldsymbol k}$'s denote the first discrete Fourier coefficients of $u$. 
The use of the inverse discrete Fourier transform entails that  for the values  at the points 
$x_{\boldsymbol k} = (k_1 h_1, \cdots, k_d h_d)$, $h_{\alpha} = L_{\alpha}/N_{\alpha}$ for $\alpha\in\{1,\cdots,d\}$, there holds $u^{\boldsymbol K}_{\boldsymbol k} =   \textrm{IFFT}[c_{\boldsymbol k}]$. Conversely,
$c_{\boldsymbol k}$ can be computed as the discrete Fourier transform of $u^K_{\boldsymbol k},$ by $c_{\boldsymbol k} =
\textrm{FFT}[u^K_{\boldsymbol k}].$

 Given a positive time discretization parameter $\delta_t$, we now revise some classical numerical schemes used for constructing sequences $(u^n)_{n \geq 0}$ approximating the exact solution $u$ at times $n\delta_t$ on $Q$.
 \subsection{Convex-concave splitting}
A popular approach proposed by Eyre in  \cite{MR1676409} uses a convex-concave splitting of the Cahn-Hilliard energy \eqref{eq:P_epsilon}. This approach provides a simple, efficient, and stable  scheme to approximate various evolution problems with a gradient flow structure, see for instance \cite{MR2418360,MR2519603,MR2799512,MR3100769,MR3564350,MR3682074,Schonlieb2011} and the recent second-order extensions proposed in \cite{MR3874087,salvalaglio2019doubly,Doubly_anisotropic}.

In Eyre's approach, $P_{\varepsilon}$ is decomposed as the sum of a convex energy $P_{\varepsilon,c}$ and a concave energy $P_{\varepsilon,e}$ (a classical explicit decomposition will be given below):
$$P_{\varepsilon}(u) = P_{\varepsilon,c}(u) + P_{\varepsilon,e}(u).$$
An effective numerical scheme can be designed by integrating implicitly the convex part, and explicitly the concave one, that is:
\begin{equation}   \label{eq:eyre}
\begin{cases}
 \frac{u^{n+1} - u^{n}}{\delta_t} &=  \Delta \mu^{n+1} \\
 \varepsilon \mu^{n+1} &=  \nabla_u  P_{\varepsilon,c}(u^{n+1}) + \nabla_u  P_{\varepsilon,e}(u^{n}).
\end{cases}
\end{equation}
Stability can be easily proved by interpreting this scheme as one step, starting from $u^n$, of the implicit discretization of the semi-linearized  PDE 
$$
\begin{cases}
 \partial_t u &=  \Delta \mu \\
 \varepsilon \mu &= \nabla_u \overline{P}_{\varepsilon,u^{n}}(u) 
\end{cases},
$$
 where $\overline{P}_{\varepsilon,u^{n}}$ is  defined by
 $$ 
\overline{P}_{\varepsilon,u^{n}}(u)  =P_{\varepsilon,c}(u) + P_{\varepsilon,e}(u^n) +
\langle \nabla_u  P_{\varepsilon,e}(u^n)  , (u - u^n) \rangle.
$$ 
The convexity of $\overline{P}_{\varepsilon,u^{n}}$ implies that
$$ \overline{P}_{\varepsilon,u^{n}}(u^{n+1}) +  \langle \nabla \overline{P}_{\varepsilon,u^{n}}(u^{n+1}), u^{n} - u^{n+1} \rangle  \leq   \overline{P}_{\varepsilon,u^{n}}(u^{n}).$$
Moreover, since
$$ \langle \nabla \overline{P}_{\varepsilon,u^{n}}(u^{n+1}), u^{n} - u^{n+1} \rangle = -  \delta_t \langle  \varepsilon \mu^{n+1}, \Delta \mu^{n+1} \rangle =  \delta_t \varepsilon \| \nabla \mu^{n+1} \|^2,$$
we deduce that
$$ \overline{P}_{\varepsilon,u^{n}}(u^{n+1}) \leq  \overline{P}_{\varepsilon,u^{n}}(u^n)   = P_{\varepsilon}(u^{n}).$$
The concavity of $P_{\varepsilon,e}$ implies that  $ P_{\varepsilon}(u) \leq \overline{P}_{\varepsilon,u^{n}}(u)$, and we conclude that 
$$ P_{\varepsilon}(u^{n+1}) \leq    P_{\varepsilon}(u^{n}),$$
therefore the above scheme \eqref{eq:eyre} is unconditionally stable, i.e., it decreases the energy without any particular assumption on the time step $\delta_t$.

When $W$ is a smooth double well potential 
as in \eqref{eq:doubleW}, a standard splitting choice is:
$$ P_{\varepsilon,c}(u) =  \frac{1}{2}\int_Q  \left( \varepsilon |\nabla u|^2  + \frac{\alpha}{\varepsilon} u^2\right)  dx,\quad P_{\varepsilon,e}(u) =  \int_{Q} \frac{1}{\varepsilon} \left( W(u) - \alpha \frac{u^2}{2} \right)dx.$$
with $\alpha \geq \max_{s \in [0,1]} \left| W''(s) \right|$ to ensure the concavity of $P_{\varepsilon,e}$. With this choice, \eqref{eq:eyre} specifies as: 
$$
\begin{cases}
 (u^{n+1} - u^{n})/\delta_t &=  \Delta \mu^{n+1} \\
 \mu^{n+1} &=  \left( - \Delta u^{n+1} + \frac{\alpha}{\epsilon^2} u^{n+1} \right) + \left( \frac{1}{\epsilon^2} (W'(u^{n}) - \alpha u^{n}) \right),   
\end{cases}
$$
which can be written in matrix form as:
$$ \begin{pmatrix}
    I_d &  - \delta_t \Delta \\
    \Delta - \alpha/\epsilon^2 & I_d
   \end{pmatrix}  
   \begin{pmatrix}
    u^{n+1} \\
    \mu^{n+1}
   \end{pmatrix}
   =
    \begin{pmatrix}
    u^{n} \\
     \frac{1}{\epsilon^2} (W'(u^{n}) - \alpha u^{n}).
   \end{pmatrix}
  $$
The pair $(u^{n+1},\mu^{n+1})$  can be thus expressed as
  $$u^{n+1} = L \left[ u^{n} + \frac{\delta_t}{\epsilon^2} \Delta \left(  W'(u^{n}) - \alpha u^{n} \right) \right],\quad \mu^{n+1}= L\left[ \frac{1}{\epsilon^2} (W'(u^{n} )  -  \Delta u^{n}) \right].$$
 with $L = \left( I_d +  \delta_t \Delta ( \Delta - \alpha/\epsilon^2 I_d ) \right)^{-1}$ having the Fourier symbol
 $$\hat{L}(\xi) = 1/(1 + \delta_t 4 \pi^2 |\xi|^2 ( 4 \pi^2 |\xi|^2 + \alpha/\epsilon^2)),$$
which makes it particularly well suited for computation in Fourier space.
  
An extension of this numerical scheme to approximate solutions to the Cahn-Hilliard model \eqref{eq:MCH} with degenerate mobility has been proposed in various contributions, see, e.g., \cite{guillengonzalez2023energystable}. However, the extension requires an implicit treatment of the mobility term, which prevents any explicit computation in Fourier space, which is why alternative methods are needed.

 \subsubsection{A convex-concave splitting for the mobility}   \label{eq:cvx-conc_split}
In \cite{bretin2020approximation,refId0}, a convex-concave splitting strategy was considered for the numerical treatment of \eqref{eq:MCH} and \eqref{eq:NMN_CH} based on the gradient structure of a functional $J$ defined as
$$J_{M,N,u}(\mu) :=  \int_Q \frac{M(u)}{2} |\nabla (N(u) \mu)| ^2 dx$$
with $N\equiv 1$ in the case of \eqref{eq:MCH}. The motivation for such a functional is the following: considering that, in a Cahn-Hilliard type flow, surface tensions are energetic terms whereas mobilities can be associated with the metric governing the flow, $J$ corresponds to a modification of the classical $\dot{H}^1$ semi-norm in order to incorporate the mobilities.

Using $J_{M,N,u}$, both \eqref{eq:MCH} and \eqref{eq:NMN_CH} can be rewritten as
$$
\begin{cases}
	u_t &=  - \nabla J_{M,N,u}(\mu) \\
	\mu &= \nabla P_{\varepsilon}(u)/\varepsilon 
\end{cases}
$$
Following Eyre's approach,  the mobility-weighted functional $J_{M,N,u}$ is decomposed in~\cite{bretin2020approximation,refId0} as the sum of a convex and a concave function:  
$$J_{M,N,u}(\mu) = J_{M,N,u,1}(\mu) - J_{M,N,u,2}(\mu),$$
and the following numerical scheme is considered:
$$
\begin{cases}
	(u^{n+1} - u^{n})/\delta_t& =  - \nabla J_{M,N,u^n,1}(\mu^{n+1})  + \nabla J_{M,N,u^n,2}(\mu^{n}), \\
	\mu^{n+1} &= \nabla \overline{P}_{\varepsilon,u^{n}}(u^{n+1})/\varepsilon.
\end{cases}
$$
As before, $\overline{P}_{\varepsilon,u^{n}}$ is defined by
$$ \overline{P}_{\varepsilon,u^{n}}(u)  :=P_{\varepsilon,c}(u) + P_{\varepsilon,e}(u^n) +
\langle \nabla_u  P_{\varepsilon,e}(u^n)  , (u - u^n) \rangle,$$
with
$$ P_{\varepsilon,c}(u) =  \frac{1}{2}\int_Q \varepsilon |\nabla u|^2  + \frac{\alpha}{\varepsilon^2} u^2  dx, \quad  P_{\varepsilon,e}(u) =  \int_{Q} \frac{1}{\varepsilon} \left( W(u) - \alpha \frac{u^2}{2}\right)dx.$$
The following settings are used in  \cite{bretin2020approximation,refId0}:
\begin{itemize}
	\item For the \eqref{eq:MCH} model ($N\equiv 1$):
	\begin{equation}   \label{eq:def_m_MCH}
J_{M,u,1}(\mu) =  \int_Q \frac{m}{2} |\nabla (\mu)| ^2dx, \quad \text{ with }\quad m: = \sup_{s \in [0,1]} M(s).
\end{equation}
	It is easily seen that
	\begin{equation}  \label{eq:def_e2_MCH}
	J_{M,u,2}(\mu) = \int_Q \frac{m-M(u)}{2} |\nabla \mu| ^2 dx,
	\end{equation}
	is  convex and quadratic. 
	\item For the \eqref{eq:NMN_CH} model (with $N(u)=1/\sqrt{M(u)}$):
	\begin{equation} \label{eq:def_e1_NMNCH}
	J_{M,N,u,1}(\mu) =  \int_Q \frac{m}{2} |\nabla (\mu)| ^2 + \frac{\beta}{2} |\mu|^2dx.
	\end{equation}
	Observing that
	\begin{align*}
		J_{M,N,u}(\mu) = \frac{1}{2}\int_Q M(u) | \nabla (N(u)\mu)|^2 dx =  \frac{1}{2} \int_Q |\nabla \mu|^2 dx + 
		\int_Q G(u) \cdot \nabla \mu \mu dx + \frac{1}{2} \int_Q |G(u)|^2 \mu^2 dx,
	\end{align*}
with $G(u) = - \frac{1}{2} \nabla (\log(M(u)))$, it follows that
	\begin{equation} \label{eq:def_e2_NMNCH}
	J_{M,N,u,2}(\mu)=  \frac{1}{2} \int_Q (m-1) |\nabla \mu|^2 dx - 
	\int_Q G(u) \cdot \nabla \mu \mu dx + \frac{1}{2} \int_Q  (\beta - |G(u)|^2) \mu^2 dx,
\end{equation}
	is positive, convex, and quadratic for $m, \beta$ sufficiently large. 
	
\end{itemize}
Despite the efficiency of the numerical schemes associated with these settings, and their empirical numerical stability observed in  \cite{bretin2020approximation,refId0}, a rigorous proof of stability (i.e., the decay of the underlying Cahn-Hilliard energy) could not be proved.
In the following section we consider a numerical variant of these schemes which maintains the same level of efficiency but for which stability properties can be proved. 

%
 \subsection{Scalar auxiliary variable schemes}   \label{sec:SAV_review}

The {\it Scalar Auxiliary Variable} (SAV) approach has been considered in a variety of papers \cite{HOU2019307,SHEN2018407,doi:10.113717M1150153,doi:10.113719M1264412,Huang2020} 
to tackle a large class of gradient flow systems, in particular the Cahn-Hilliard equation \cite{wang2022application,huang2023structurepreserving,yang}, dissipative systems \cite{bouchriti2020remarks,zhang2022generalized} and variants \cite{ESAV,ExpSAV}. 

In the particular case of the Cahn-Hilliard energy $P_\varepsilon$ \eqref{eq:P_epsilon}, the standard SAV approach  consists in introducing an auxiliary variable $r\,:Q\times [0,T]\to \R^+$  to stabilize the numerical discretization. This variable is usually associated with the reaction term in $P_{\varepsilon}$ and defined as
 \begin{equation}  \label{eq:def_rW}
 r := \sqrt{ \int_Q W(u) dx}.
\end{equation}
 Assuming that everything is smooth and that $\int_Q W(u) dx$ does not vanish, the derivation of this equality gives
 $$ r_t =\displaystyle \frac{1}{2 \sqrt{ \int_Q W(u) dx}  } \int_{Q} W'(u) u_t dx$$
 This leads to the following standard SAV relaxation of the Cahn-Hilliard system \eqref{eq:CH_eq}:
 $$
\left\{ \begin{array}{lll}
   \partial_t u &=&  \Delta \mu, \\
  \mu &=&\displaystyle -\Delta u + \frac{r}{ \sqrt{ \int_Q W(u) dx} } \frac{ W'(u)}{\varepsilon^2}, \\
  r_t &=&\displaystyle \frac{1}{2 \sqrt{ \int_Q W(u) dx}  } \int_{Q} W'(u) u_t dx,
 \end{array}\right.
 $$
 A first-order natural discretization scheme for this SAV flow is:
 $$
 \left\{ \begin{array}{lll}
 (u^{n+1} - u^{n})/\delta_t &=& \Delta \mu^{n+1}  \\
 \mu^{n+1} &=&\displaystyle  - \Delta u^{n+1} + \frac{r^{n+1}}{ \sqrt{\int_Q W(u^{n})dx}} \frac{W'(u^{n})}{\varepsilon^2} \\
 r^{n+1} - r^{n} &=& \displaystyle  \frac{1}{2 \sqrt{\int_Q W(u^n) dx } } \int_Q W'(u^n) (u^{n+1} - u^n) dx,
 \end{array}\right.
 $$
Updating $r$ in the third equation requires efficient integration methods, it is typically done using standard integration rules combined with a suitable discretization.

The interest of the above SAV numerical scheme comes by observing that:
\begin{align}
 - \delta_t \int_Q | \nabla \mu^{n+1}|^2 dx &=  \langle \mu^{n+1},  u^{n+1} - u^{n} \rangle = \int_Q \nabla u^{n+1} \cdot \nabla (u^{n+1} - u^n) \ dx + 2 (r^{n+1} -  r^{n}) r^{n+1}  \notag \\
 &= \tilde{P}_{\varepsilon}(u^{n+1},r^{n+1}) - \tilde{P}_{\varepsilon}(u^{n},r^n) +  \tilde{P}_{\varepsilon}(u^{n+1} - u^{n},r^{n+1}-r^n),   \label{eq:SAV_general}
\end{align}  
where  $\tilde{P}_{\varepsilon}$ is a relaxation of $P_{\varepsilon}$ defined by 
$$ \tilde{P}_{\varepsilon}(u,r): = \frac{1}{2}\int_Q |\nabla u|^2 dx + r^2.$$
It follows from \eqref{eq:SAV_general} and the non-negativity of $ \tilde{P}_{\varepsilon}$ that, for all $n$ and all $\delta_t>0$,
$$  \tilde{P}_{\varepsilon}(u^{n+1},r^{n+1}) \leq  \tilde{P}_{\varepsilon}(u^{n},r^{n}),$$
i.e., the scheme is unconditionally stable with respect to  $\tilde{P}_{\varepsilon}$.

The matrix form of \eqref{eq:SAV_general} is
$$ \begin{pmatrix}
    I_d &  - \delta_t \Delta \\
    \Delta  & I_d
   \end{pmatrix}  
   \begin{pmatrix}
    u^{n+1} \\
    \mu^{n+1}
   \end{pmatrix}
   =
    \begin{pmatrix}
    u^{n} \\
     \frac{r^{n+1}}{ \sqrt{\int_Q W(u^n) dx }  } \frac{1}{\epsilon^2} W'(u^{n})
   \end{pmatrix}.
  $$

Observe that
\begin{equation}
u^{n+1} = u^{n+1}_1 + r^{n+1} u^{n+1}_2,\quad \mu^{n+1} = \mu^{n+1}_1 + r^{n+1} \mu^{n+1}_2,\label{un+1,mun+1}
\end{equation}
where $( u_1^{n+1},\mu_1^{n+1})$ and $( u_2^{n+1},\mu_2^{n+1})$ are solutions to the following systems that can be solved easily in Fourier domain:
$$
 \begin{pmatrix}
    I_d &  - \delta_t \Delta \\
    \Delta  & I_d
   \end{pmatrix}  
   \begin{pmatrix}
    u_1^{n+1} \\
    \mu_1^{n+1}
   \end{pmatrix}
   = \begin{pmatrix}
    u^{n} \\
     0
   \end{pmatrix}, \qquad
    \begin{pmatrix}
    I_d &  - \delta_t \Delta \\
    \Delta  & I_d
   \end{pmatrix}  
   \begin{pmatrix}
    u_2^{n+1} \\
    \mu_2^{n+1}
   \end{pmatrix}
   = \begin{pmatrix}
    0 \\
      \frac{1}{ \sqrt{\int_Q W(u^n) dx }  } \frac{1}{\epsilon^2} W'(u^{n})
   \end{pmatrix}
 $$
Observe now that $r^{n+1}$ can be calculated with the last equation of \eqref{eq:SAV_general}. Indeed,
 $$  r^{n+1}  - r^{n} = h_n(u_1^{n+1}) + r^{n+1} h_n(u_2^{n+1}) - h_n(u^{n})$$
 where
 $$h_n(u) := \frac{1}{2 \sqrt{\int_Q W(u^n) dx } } \int_Q W'(u^n) u\  dx,
 $$
thus
$$ r^{n+1} = \frac{r^{n} -  h_n(u^{n}) + h_n(u_1^{n+1}) }{1 -  h_n(u_2^{n+1}) }.$$
Given  $( u_1^{n+1},\mu_1^{n+1})$, $( u_2^{n+1},\mu_2^{n+1})$, and $r^{n+1}$, the values of $u^{n+1},\mu^{n+1}$ follow from \eqref{un+1,mun+1}.

\medskip
 SAV approaches yield relatively efficient numerical schemes, with algorithmic costs that closely resemble those of convex-concave approaches, but endowed with stability guarantees. Furthermore, since higher-order time discretization can be used, more accurate orders  can be obtained while maintaining good stability properties, like for example in  \cite{yang} where a second-order scheme 
  is considered. As shown in \cite{JIANG2022110954}, SAV schemes perform well as long as $r^n$ remains close to the quantity $\sqrt{\int_Q W(u^n) dx}$ for every $n$, which, unfortunately, is not always the case. To enhance the accuracy of these schemes, the authors therein add an inertial term to enforce a better approximation, while preserving the decreasing property of the relaxation of $P_{\varepsilon}$.

The SAV approach can be straightforwardly applied to models \eqref{eq:MCH} and \eqref{eq:NMN_CH},  leading in the latter case to the following system   
$$
\left\{\begin{array}{lll}
 u_t &=& N(u)\div( M(u) \nabla ( N(u)\mu)) , \\
 \mu &=&\displaystyle -\varepsilon\Delta u + \frac{r(t)}{\varepsilon\sqrt{\int_Q W'(u) dx}}~W'(u),\\
 r_t & =&\displaystyle \frac{1}{2\varepsilon\sqrt{\int_Q W'(u) dx}}\int_Q W'(u)u_t~dx
\end{array}\right.
$$
which is naturally associated to the following numerical scheme:
\begin{equation}   \label{eq:SAV_naive}
\left\{\begin{array}{lll}
(u^{n+1} - u^{n})/\delta_t &=&  N(u^n)\div( M(u^n) \nabla ( N(u^n)\mu^{n+1}))  \\
\mu^{n+1} &= &\displaystyle- \Delta u^{n+1} + \frac{r^{n+1}}{ \sqrt{\int_Q W(u^{n})dx}} \frac{W'(u^{n})}{\varepsilon^2} \\
r^{n+1} - r^{n} &=&\displaystyle  \frac{1}{2 \sqrt{\int_Q W(u^n) dx } } \int_Q W'(u^n) (u^{n+1} - u^n) dx
\end{array}\right.
\end{equation}
The decay
$$  \tilde{P}_{\varepsilon}(u^{n+1},r^{n+1}) \leq  \tilde{P}_{\varepsilon}(u^{n},r^{n}).$$ 
can be easily obtained by simply adapting the proof of \cite[Theorem 2.1]{yang}, observing that:
\begin{align*}
 - \delta_t \int_Q M(u^n)| \nabla (N(u^n) \mu^{n+1})|^2 dx &= \langle \mu^{n+1},  u^{n+1} - u^{n} \rangle \\
 &= \int_Q \nabla u^{n+1} \cdot \nabla (u^{n+1} - u^n) dx + 2 (r^{n+1} -  r^{n}) r^{n+1} \\
 &= \tilde{P}_{\varepsilon}(u^{n+1},r^{n+1}) - \tilde{P}_{\varepsilon}(u^{n},r^n) +  \tilde{P}_{\varepsilon}(u^{n+1} - u^{n},r^{n+1}-r^n).
\end{align*}
However, compared to the non-degenerate case, \eqref{eq:SAV_naive} depends on a linear system that is no longer explicitly computable in Fourier space and is highly susceptible to numerical instabilities due to the choice of the mobilities $M$ and $N$, possibly leading to poor conditioning.  Note that to avoid possible vanishing of the denominators, in the definition  \eqref{eq:def_rW} a positive constant $C_0$ can be added.

With the perspective of defining SAV numerical schemes able to deal effectively with degenerate models like \eqref{eq:NMN_CH}, we show in the following section how  first and second-order in time SAV discretizations can be defined for the numerical resolution of rather general gradient flows. To illustrate the numerical accuracy and the stability property, we first focus on an exemplar linear problem. The SAV schemes introduced are then applied to the \eqref{eq:NMN_CH} model in Section  \ref{sec:applic_CH}.

\section{SAV approaches for general $J^{-1}$-gradient flows}   \label{sec:grad_flow}
We consider the following general gradient flow structure:
\begin{equation}  \label{eq:grad-flow}
\begin{cases}
 u_t &= - \nabla J(\mu) \\
 \mu &= \nabla E(u),
\end{cases}
\end{equation}
which is obviously related to flows previously discussed. Indeed, when $E$ is the Cahn-Hilliard energy,
\begin{enumerate}
\item The $L^2$-gradient flow of $E$, i.e. the Allen-Cahn equation, corresponds to the choice $J(\mu)=\frac 1 2\|\mu\|_{L^2}^2$,
\item The $\dot{H}^{-1}$ flow of $E$, i.e. the Cahn-Hilliard equation,  is obtained with $J(\mu)=\frac 1 2\|\nabla \mu\|_{L^2}^2$,
\end{enumerate}
The \eqref{eq:MCH} and \eqref{eq:NMN_CH} models do not strictly enter this general structure, for they both involve mobilities that depend on $u$, hence the metric structure changes along the flow. However, what really motivates our study is the derivation of numerical schemes associated with \eqref{eq:grad-flow}, and in a time-discrete setting a reformulation is possible that associates directly the \eqref{eq:MCH} and \eqref{eq:NMN_CH} models with numerical discretizations of \eqref{eq:grad-flow}, as we shall see with details in the next sections.

%
%

For simplicity, we leave the characterization and analysis of  \eqref{eq:grad-flow} in general function spaces for future work, and we examine the problem for $u, \mu \in {\mathcal H}$, where ${\mathcal H}$ is a Hilbert space. We focus on functionals $J,\, E\,: {\mathcal H}\to\R$ of the form
\begin{equation}
J(\mu) = \frac{1}{2} \| L \mu \|^2\quad  \text{ and } \quad  E(u) = \frac{1}{2} \| A u - b \|^2\label{def:JE}
\end{equation}
with $L,\,A\in{\mathcal L}(\mathcal H)$ continuous and $b\in{\mathcal H}$. With this choice, system \eqref{eq:grad-flow} specifies as
\begin{equation}   \label{eq:CH_gen2}
\left\{\begin{array}{lll}
 u_t &=& - L^*L \mu, \\
 \mu &=& A^* (Au - b).
\end{array}\right.
\end{equation}
Both $E$ and $J$ decrease along this flow since
\[\
 \frac{d}{dt} E(u) = \langle \nabla E(u) , u_t \rangle =  - \langle \mu , L^* L \mu \rangle = - \| L \mu\|^2 \leq 0,
\]
and
\[
  \frac{d}{dt} J(\mu) = \langle \nabla J(\mu) , \mu_t \rangle = - \langle u_t , A^* A u_t \rangle = - \| A u_t \|^2 \leq 0.
  \]

We mentioned above that the Allen-Cahn and the Cahn-Hilliard equations, as well as the \eqref{eq:MCH} and \eqref{eq:NMN_CH} models in a time discrete setting, are directly related to \eqref{sec:grad_flow}. But these four models are based on $P_{\varepsilon}$ (see \eqref{eq:P_epsilon}) which is not quadratic in contrast with the choice $E(u) = \frac{1}{2} \| A u - b \|^2$ in \eqref{def:JE}. This is again a matter of time discretization: for these four models, we will identify $E$ (up to an additive constant) with:
 $$    \frac 1\varepsilon\overline{P}_{\varepsilon,u^{n}}(u)  =\frac 1\varepsilon P_{\varepsilon,c}(u) + \frac 1\varepsilon P_{\varepsilon,e}(u^n) +
\frac 1\varepsilon\langle \nabla_u  P_{\varepsilon,e}(u^n)  , u - u^n \rangle.$$
As for $J$, let us examine for the sake of illustration the case of the \eqref{eq:MCH} model (see the next sections for the other models). We define for \eqref{eq:MCH} (still in a time discrete setting):
$$J(\mu)=J_{M,u^n}(\mu) =  \frac{1}{2} \int_Q M(u^n) |\nabla  \mu| ^2 dx$$
With these definitions, $J_{M,u^n}$ and $\overline{P}_{\varepsilon,u^{n}}$  are convex, quadratic and of the form \eqref{def:JE} with
$$L \mu =  \sqrt{M(u^n)} \nabla  \mu,$$
and 
$$A^*A u = - \Delta u + \frac{\alpha}{\varepsilon^2} u ,\quad A^*b =  \frac{1}{\varepsilon^2} (W'(u^n) - \alpha u^n).$$
Obviously, in the continuous setting such operators $L$ and $A$ are not continuous on $L^2$ or $\dot{H}^1$, but it will be the case in the discrete setting.


\medskip
We now discuss the numerical discretization of \eqref{def:JE} and \eqref{eq:CH_gen2}. Following the convex-concave splitting used in section \ref{eq:cvx-conc_split}, we decompose $J$ to obtain a scheme with a convex part treated implicitly and a concave part treated explicitly. For this, we introduce two linear operators $L_1$, $L_2$ such that
\begin{equation}   \label{eq:L1L2}
J(\mu) = J_1(\mu) - J_2(\mu)\qquad \text{ with }\quad  J_i(\mu) = \frac{1}{2} \| L_i  \mu \|^2,\;\;i\in\{1,2\}
\end{equation}
so that
$$L^*  L=L_1^*  L_1-L_2^*  L_2.$$

For the numerical discretization of \eqref{eq:grad-flow} with $J$ defined as in \eqref{eq:L1L2}, let us first recall the approach introduced in \cite{bretin2020approximation,refId0} which treats implicitly the mobility term $L_1$ and explicitly the mobility term $L_2$. Given a time step $\delta t>0$, this approach gives the following first-order discretization scheme:
\begin{equation} \label{eq:Conv_concave1st}
\begin{cases}  
  \frac{u^{n+1} - u^{n}}{\delta t} &=  -{L_1}^*  L_1 \mu^{n+1}  + {L_2}^* L_2 \mu^{n}   , \\
  \mu^{n+1} &= \nabla E(u^{n+1}),
\end{cases}
\tag{\textbf{Cvx split}}
\end{equation}
Although this numerical scheme is very effective in practice, no energy stability result of the form $E(u^{n+1}) \leq E(u^n)$ has been proved. We actually believe there is no such stability, we will show later a numerical counter-example where the energy rises over a few iterations before decreasing again.   

\begin{remark}[Choosing $L_1$ and $L_2$]
Splitting \eqref{eq:L1L2} is computationally interesting if $L_1$ is ``nice enough" to be treated implicitly.  The explicit treatment of $L_2$ reduces the algorithmic cost without affecting the stability. A typical situation where such decomposition makes sense is when $L_1$ is a spatially homogeneous operator that can be dealt with efficiently using Fourier techniques, whereas $L_2$ is spatially inhomogeneous.  For instance, in the particular case of \eqref{eq:MCH},  $J_1$ and $J_2$ can be chosen as
 $$ J_1(\mu) = \frac{1}{2}\int_Q m | \nabla \mu |^2 dx, \qquad\text{ and }\quad J_2(\mu) =  \frac{1}{2}\int_Q (m - M(u^n)) | \nabla \mu |^2 dx,$$
 where $M$ and $m$ are defined in \eqref{eq:def_m_MCH}.
\end{remark}

Let us now introduce an auxiliary scalar variable $r=\sqrt{J_2(\mu)}$ to relax the mobility $J_2$. After derivation, the following new gradient flow can be defined in the spirit of SAV approaches, we shall denote it as \MbSAV{}:
\begin{equation}
\begin{cases}  \label{eq:CH_gen_SAV}
	  u_t &=  -L_1^*  L_1 \mu  + \frac{r}{\sqrt{J_2(\mu)}} {L_2}^* L_2 \mu  , \\
  r_t &=  \frac{1}{2\sqrt{J_{2}(\mu)}} \langle  L_2  \mu,   L_2  \mu_t \rangle\\
 \mu &= \nabla E(u),
\end{cases}
\tag{\textbf{Mb-SAV}}
\end{equation}

\begin{remark} Note that when $L_2 \mu = 0$, the previous system reduces to 
$u_t =  -L_1^*  L_1 \nabla E(u)$, which can be discretized using standard implicit techniques.
\end{remark}

\begin{remark}
	We stress that, in contrast with existing SAV schemes, the relaxation in \eqref{eq:CH_gen_SAV} is not based on the splitting of the energy $E$, but rather of the mobility term $J$.
\end{remark}

In the next paragraphs, we examine first and second-order discretization schemes for this new gradient flow.

\subsection{A first-order SAV scheme}  \label{sec:SAV}

 We consider the following first-order discretization scheme:
\begin{equation} \label{eq:SAV_1st}
\begin{cases}  
  \frac{u^{n+1} - u^{n}}{\delta t} &=  -{L_1}^*  L_1 \mu^{n+1}  + \frac{r^{n+1}}{\sqrt{J_{2}(\mu^n)}} {L_2}^* L_2 \mu^{n}   , \\
  r^{n+1} - r^{n} &=  \frac{1}{2\sqrt{J_{2}(\mu^n)}} \langle  L_2  \mu^{n},   L_2 (\mu^{n+1} - \mu^n) \rangle .\\
 \mu^{n+1} &= \nabla E(u^{n+1}),
\end{cases}
\tag{\textbf{Mb-SAV 1}}
\end{equation}
which can be seen as a semi-implicit scheme that treats $-L_1^*L_1\mu$ and $r$ implicitly, while it treats explicitly $J_2(\mu)$ and its gradient  $L_2^*L_2\mu$.

\begin{proposition}  \label{th:decay_1stSAV}
Let $(u^n,\mu^n,r^n)_{n\geq 0}$ be the sequence of solutions to \eqref{eq:SAV_1st} starting from $(u^0,\mu^0,r^0)$ with $\mu^0=\nabla E(u^0)$ and $r^0= \sqrt{J_2(\mu^{0})}$. 
$$\forall n\in\N, \quad r^{n+1}\geq 0\Longrightarrow E(u^{n+1}) \leq E(u^{n}).$$
Furthermore, the functional $\tilde J$ defined by
\begin{equation}   \label{eq:def_Jtilde}
	 \tilde{J}(\mu,r) := \frac{1}{2} \| L_1 \mu\|^2 - r^2.
	\end{equation}
satisfies the following decay property:
$$\forall n\in\N,\quad \tilde{J}(u^{n+1},r^{n+1}) \leq \tilde{J}(u^{n},r^{n})$$ 
\end{proposition}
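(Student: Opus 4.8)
The plan is to prove both statements by testing the $u$-update in \eqref{eq:SAV_1st} against suitable increments and exploiting the quadratic form of $E$ and of the $J_i$'s. Three elementary observations will be used throughout. First, since $\mu^{n+1}=\nabla E(u^{n+1})$ and $E(u)=\tfrac12\|Au-b\|^2$, the gradient $\nabla E$ is affine with linear part $A^*A$, so $\mu^{n+1}-\mu^n=A^*A(u^{n+1}-u^n)$ and hence $\langle u^{n+1}-u^n,\mu^{n+1}-\mu^n\rangle=\|A(u^{n+1}-u^n)\|^2\ge0$. Second, the exact Taylor expansion of the quadratic $E$ reads $E(u^{n+1})-E(u^n)=\langle\mu^{n+1},u^{n+1}-u^n\rangle-\tfrac12\|A(u^{n+1}-u^n)\|^2$, so in particular $E(u^{n+1})-E(u^n)\le\langle\mu^{n+1},u^{n+1}-u^n\rangle$. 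Third, $\|L_2\nu\|\le\|L_1\nu\|$ for all $\nu$, because $J_1(\nu)-J_2(\nu)=J(\nu)=\tfrac12\|L\nu\|^2\ge0$.

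For the decay of $\tilde{J}$ (read as $\tilde{J}(\mu^{n},r^{n})$, correcting the evident misprint), I would test the first line of \eqref{eq:SAV_1st} with $\mu^{n+1}-\mu^n$. By the first observation the left-hand side equals $\tfrac1{\delta t}\|A(u^{n+1}-u^n)\|^2\ge0$. On the right-hand side, expanding $\langle L_1\mu^{n+1},L_1(\mu^{n+1}-\mu^n)\rangle$ by polarization and replacing $\tfrac{r^{n+1}}{\sqrt{J_2(\mu^n)}}\langle L_2\mu^n,L_2(\mu^{n+1}-\mu^n)\rangle$ by $2r^{n+1}(r^{n+1}-r^n)=(r^{n+1})^2-(r^n)^2+(r^{n+1}-r^n)^2$ via the second line of \eqref{eq:SAV_1st}, one obtains the exact identity
\begin{equation*}
\tfrac1{\delta t}\|A(u^{n+1}-u^n)\|^2=\tilde{J}(\mu^n,r^n)-\tilde{J}(\mu^{n+1},r^{n+1})-\tfrac12\|L_1(\mu^{n+1}-\mu^n)\|^2+(r^{n+1}-r^n)^2 .
\end{equation*}
It then suffices to absorb the last term: Cauchy--Schwarz applied to the second line of \eqref{eq:SAV_1st} gives $|r^{n+1}-r^n|\le\tfrac1{\sqrt2}\|L_2(\mu^{n+1}-\mu^n)\|$, hence $(r^{n+1}-r^n)^2\le\tfrac12\|L_1(\mu^{n+1}-\mu^n)\|^2$ by the third observation, so the last two terms on the right combine non-positively and the non-negativity of the left-hand side forces $\tilde{J}(\mu^{n+1},r^{n+1})\le\tilde{J}(\mu^n,r^n)$.

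For the decay of $E$ under the hypothesis $r^{n+1}\ge0$, I would test the first line of \eqref{eq:SAV_1st} with $\mu^{n+1}$ and use the second observation to get $E(u^{n+1})-E(u^n)\le\langle\mu^{n+1},u^{n+1}-u^n\rangle=\delta t\bigl(-\|L_1\mu^{n+1}\|^2+\tfrac{r^{n+1}}{\sqrt{J_2(\mu^n)}}\langle L_2\mu^n,L_2\mu^{n+1}\rangle\bigr)$. Using $r^{n+1}\ge0$, $\|L_2\mu^n\|=\sqrt{2J_2(\mu^n)}$ and Cauchy--Schwarz, the cross term is $\le\sqrt2\,r^{n+1}\|L_2\mu^{n+1}\|$; if in addition $r^{n+1}\le\sqrt{J_2(\mu^{n+1})}=\tfrac1{\sqrt2}\|L_2\mu^{n+1}\|$, this is $\le\|L_2\mu^{n+1}\|^2\le\|L_1\mu^{n+1}\|^2$ and $E(u^{n+1})\le E(u^n)$ follows. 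Everything therefore reduces to the two-sided a priori bound $0\le r^n\le\sqrt{J_2(\mu^n)}$, which I would establish by induction on $n$ (the non-negativity assumed in the statement being what propagates it): it holds at $n=0$ since $r^0=\sqrt{J_2(\mu^0)}$, and, assuming it at level $n$ together with $r^{n+1}\ge0$, a direct computation with $a:=J_2(\mu^n)$, $w:=L_2\mu^n$ (so $\|w\|^2=2a$) and $v:=L_2(\mu^{n+1}-\mu^n)$ yields
\begin{equation*}
(r^{n+1})^2-J_2(\mu^{n+1})=\bigl((r^n)^2-a\bigr)+\langle w,v\rangle\Bigl(\tfrac{r^n}{\sqrt a}-1\Bigr)+\Bigl(\tfrac{\langle w,v\rangle^2}{2\|w\|^2}-\tfrac12\|v\|^2\Bigr);
\end{equation*}
the first and third brackets are $\le0$ (the latter by Cauchy--Schwarz), and the middle term is $\le0$ when $\langle w,v\rangle\ge0$, while for $\langle w,v\rangle<0$ one uses $|\langle w,v\rangle|\le\sqrt2\,\|w\|\,r^n$ — which is precisely the content of $r^{n+1}\ge0$ — together with $r^n\le\sqrt a$ to bound $(r^n)^2-a+\langle w,v\rangle(\tfrac{r^n}{\sqrt a}-1)\le-(\sqrt a-r^n)^2\le0$, closing the induction. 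The only non-routine point is this monotonicity argument for the scalar variable $r^n$; the degenerate case $J_2(\mu^n)=0$ is ruled out a priori (e.g.\ by adding a positive constant in the definition of $r$, as noted after \eqref{eq:def_rW}) or handled via the reduction mentioned after \eqref{eq:CH_gen_SAV}.
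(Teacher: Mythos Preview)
Your argument is correct and follows the same overall strategy as the paper: test the $u$-update against $\mu^{n+1}$ for the decay of $E$, against $\mu^{n+1}-\mu^n$ for the decay of $\tilde J$, and control the SAV residual via Cauchy--Schwarz together with $\|L_2\nu\|\le\|L_1\nu\|$. Your treatment of $\tilde J$ is essentially identical to the paper's.

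The one genuine difference is how you obtain the key upper bound $r^{n+1}\le\sqrt{J_2(\mu^{n+1})}$. You run an induction with a case split on the sign of $\langle L_2\mu^n,L_2(\mu^{n+1}-\mu^n)\rangle$, using the hypothesis $r^{n+1}\ge0$ in the negative case. The paper instead observes, directly from the $r$-update and Cauchy--Schwarz applied to $\langle L_2\mu^n,L_2\mu^{n+1}\rangle\le\|L_2\mu^n\|\,\|L_2\mu^{n+1}\|$, the one-line monotonicity
\[
r^{n+1}-\sqrt{J_2(\mu^{n+1})}\ \le\ r^{n}-\sqrt{J_2(\mu^{n})},
\]
which, telescoped from $r^0=\sqrt{J_2(\mu^0)}$, yields $r^{n+1}\le\sqrt{J_2(\mu^{n+1})}$ for all $n$ \emph{unconditionally}. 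This is both shorter and slightly stronger: it proves the per-step implication exactly as stated, without needing $r^k\ge0$ at earlier steps to propagate the induction. Your version, strictly read, delivers $E(u^{n+1})\le E(u^n)$ only under the cumulative assumption $r^1,\dots,r^{n+1}\ge0$; this suffices for the intended application (where nonnegativity is enforced), but the paper's route is cleaner and matches the statement more precisely.
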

\begin{proof}~\\
\noindent {\it Decay of $E$}: from the definition of $r^n$ we get that
$$
    r^{n+1}-r^n = \frac{1}{2\sqrt{J_{2}(\mu^n)}}  \langle  L_2  \mu^{n},   L_2 \mu^{n+1} \rangle  - \sqrt{J_{2}(\mu^n)}.\notag
$$
Then, by Cauchy-Schwarz inequality:
$$  \frac{1}{2} \langle  L_2 \mu^{n+1} ,  L_2 \mu^{n} \rangle \leq  \frac{1}{2} \| L_2 \mu^n \| \| L_2 \mu^{n+1}  \| = \sqrt{J_2(\mu^n)} \sqrt{J_2(\mu^{n+1})},$$
thus
$$ r^{n+1} - \sqrt{J_2(\mu^{n+1})} \leq  r^n - \sqrt{J_2(\mu^{n})}.$$
By induction, we deduce from the assumption $r^0 = \sqrt{J_2(\mu^{0})}$ that
$$r^{n+1} \leq \sqrt{J_{2}(\mu^{n+1})}.$$
\noindent $E$ being convex, we have that
$$ E(u^{n+1}) + \langle \nabla E(u^{n+1}) , u^{n} - u^{n+1} \rangle \leq E(u^{n}),$$
so $E$ is decreasing if $\langle \nabla E(u^{n+1}) , u^{n+1} - u^{n} \rangle \leq 0$.
To prove it, we multiply the first equation of \eqref{eq:SAV_1st} by  $ \mu^{n+1}$ and we obtain, using  $r^{n+1}\leq \sqrt{J_{2}(\mu^{n+1})}$ and the assumption  $r^{n+1}\geq 0$, that
\begin{align*}
\frac{1}{\delta_t} \langle \nabla E(u^{n+1}) , u^{n+1} - u^{n} \rangle  &=~  - \left( \| L_1 \mu^{n+1} \|^2  -  \frac{r^{n+1}}{\sqrt{J_{2}(\mu^n)}}  \langle   L_2 \mu^{n+1} ,  L_2 \mu^{n} \rangle \right),  \\
&\leq ~- \left( \| L_1 \mu^{n+1} \|^2   - 2 r^{n+1}\sqrt{J_{2}(\mu^{n+1})} \right),\\
&\leq ~- \left( \| L_1 \mu^{n+1} \|^2   - 2J_{2}(\mu^{n+1}) \right)\\
&= ~- 2  J(\mu^{n+1})  \leq 0,
\end{align*}
which implies that 
$$ E(u^{n+1})  \leq  E(u^{n}).$$
\noindent {\it Decay of $\tilde J$}: As $E$ is convex, we have for all $n\in\N$
\begin{align*}
0 &\leq~\frac 1{\delta t}\langle \nabla E(u^{n+1}) - \nabla E(u^{n}), u^{n+1} - u^{n} \rangle \\ 
  &=~  \langle  \mu^{n+1} - \mu^{n},   -L_1^*  L_1 \mu^{n+1}  + \frac{r^{n+1}}{\sqrt{J_{2}(\mu^n)}} L_2^* L_2 \mu^{n}   \rangle \\
  &=~  \frac{1}{2} \left( - \| L_1 \mu^{n+1} \|^2  - \| L_1 (\mu^{n+1} - \mu^n) \|^2 +  \| L_1 \mu^{n} \|^2 \right) + \left( 2 r^{n+1} (r^{n+1} - r^{n}) \right) \\
  &=~  \tilde{J}(\mu^n,r^n) - \tilde{J}(\mu^{n+1},r^{n+1}) -  \tilde{J}(\mu^{n+1}- \mu^{n},r^{n+1} - r^{n}), 
\end{align*}
where $\tilde{J}$ is defined as in \eqref{eq:def_Jtilde}. Observing that, by Cauchy-Schwarz inequality,
\begin{align*}
    |r^{n+1}-r^n| =  \frac{1}{2\sqrt{J_{2}(\mu^n)}} |\langle  L_2  \mu^{n},   L_2 (\mu^{n+1} - \mu^n)| \rangle \leq  \sqrt{J_{2}(\mu^{n+1} - \mu^n)},
\end{align*}
we deduce that
$$  \tilde{J}(\mu^{n+1}- \mu^{n},r^{n+1} - r^{n}) \geq  \frac{1}{2} \| L_1 (\mu^{n+1} - \mu^{n})\|^2 - J_{2}(\mu^{n+1} - \mu^n)  = J(\mu^{n+1} - \mu^n)  \geq 0,$$
from which follows the decay property of $\tilde J$:
$$  \tilde{J}(\mu^{n+1},r^{n+1})  \leq  \tilde{J}(\mu^n,r^n).$$
\end{proof}

\begin{remark}
The result above relies on the assumptions $r^0= \sqrt{J_2(\mu^{0})}$ and $r^{n+1}\geq 0$. The first one is very natural for the SAV approach. The second one may not be true along the iterations even if $r^{n+1}$ is a good approximation of the positive quantity $\sqrt{J_{2}(\mu^{n+1})}$. In practice, we observe that for most numerical simulations $r^{n+1}$ becomes negative after sufficiently many iterations, and from there the scheme becomes unstable, i.e. the energy may increase. To solve this issue, the simplest solution is to add to the numerical scheme the constraint that $r^{n+1}\geq 0$, we shall discuss it later.
\end{remark}

We now explain how to compute the solution of the scheme \eqref{eq:SAV_1st} easily. 
Recall that whenever $E(u) = \frac{1}{2} \| Au - b \|^2$ the first and third equations of the scheme are equivalent to the system:
$$
\begin{pmatrix} I_d  & \delta_t {L_1}^*  L_1 \\ - A^* A  & I_d  \end{pmatrix} 
\begin{pmatrix}  u^{n+1} \\  \mu^{n+1}   \end{pmatrix} =  \begin{pmatrix}  u^{n} \\  -A^* b   \end{pmatrix} + r^{n+1}  \begin{pmatrix} \delta_t {L_2}^* L_2 \mu^{n}/\sqrt{J_{2}(\mu^n)} \\  0   \end{pmatrix}.
$$
It is easily seen that $ u^{n+1}=u_1^{n+1} + r^{n+1} u_2^{n+1}$ and $\mu^{n+1} =   \mu_1^{n+1} + r^{n+1} \mu_2^{n+1}$ with $(u^{n+1}_1, \mu^{n+1}_1)$, $(u^{n+1}_2, \mu^{n+1}_2)$ the solutions to the systems

\begin{equation}
\label{eq:u_1-mu_1}
\begin{pmatrix}  u^{n+1}_1 \\  \mu^{n+1}_1   \end{pmatrix} =  
\begin{pmatrix} I_d  & \delta_t {L_1}^*  L_1 \\ - A^* A  & I_d  \end{pmatrix}^{-1} 
\begin{pmatrix}  u^{n} \\ - A^*b   \end{pmatrix}
\end{equation}
and
\begin{equation}
\label{eq:u_2-mu_2}
 \begin{pmatrix}  u^{n+1}_2 \\  \mu^{n+1}_2   \end{pmatrix} =  
\begin{pmatrix} I_d  & \delta_t {L_1}^*  L_1 \\ - A^* A  & I_d  \end{pmatrix}^{-1} 
\begin{pmatrix}  \delta_t {L_2}^* L_2 \mu^{n}/\sqrt{J_{2}(\mu^n)} \\  0   \end{pmatrix}.
\end{equation}

To calculate $r^{n+1}$, observe that
$$   r^{n+1} - r^{n} =   h_n(\mu^{n+1}) - h_n(\mu^{n}) \text{ where } h_n(\mu) := \frac{1}{2\sqrt{J_{2}(\mu^{n})}}  \langle  L_2 \mu^{n} ,   L_2 \mu  \rangle.$$
Using $\mu^{n+1} =   \mu_1^{n+1} + r^{n+1} \mu_2^{n+1}$, we  deduce that 
\begin{equation}
\label{eq:r-nplus1}
r^{n+1} = \frac{ r^{n} -h_n(\mu^{n}) +  h_n(\mu_1^{n+1}) }{1 - h_n(\mu_2^{n+1})}
\end{equation}

In practice, the numerical scheme decomposes into three steps:
\begin{description}
\item[Step \#1:] Compute $(u^{n+1}_1, \mu^{n+1}_1)$, $(u^{n+1}_2, \mu^{n+1}_2)$ using \eqref{eq:u_1-mu_1} and \eqref{eq:u_2-mu_2}
\item[Step \#2:] Compute $r^{n+1}$ using \eqref{eq:r-nplus1}
\item[Step \#3:]  Compute $u^{n+1}=u_1^{n+1} + r^{n+1} u_2^{n+1}$ and $\mu^{n+1} =   \mu_1^{n+1} + r^{n+1} \mu_2^{n+1}$
\end{description}

\begin{remark} 
In view of Proposition~\ref{th:decay_1stSAV}, the stability of the scheme is ensured if $r^{n+1}\geq 0$. This condition can be easily plugged into the scheme by simply replacing Step $2$ with 
 $$ r^{n+1} = max\left\{\frac{ r^{n} -h_n(\mu^{n}) +  h_n(\mu_1^{n+1}) }{1 - h_n(\mu_2^{n+1})},0 \right\}.$$
This replacement does not modify the stability proof as in the case of $r^{n+1}=0$,  $(u^{n+1},\mu^{n+1})$ is just the solution of an implicit discretization scheme for the $L_1$ gradient flow of $E$:
 $$
 \begin{cases}
 u_t &= - \nabla J_1(\mu) = -L_1^{*} L_1 \mu , \\
 \mu &= \nabla E(u),
\end{cases}
 $$
 
\end{remark}

\subsection{A first-order improved SAV scheme}  \label{sec:SAV_improved}

The accuracy of the SAV scheme presented in the previous section depends on
the quality of the approximation $r^{n} \simeq \sqrt{J_2(\mu^n)}$. Unfortunately, this is a recurring problem with SAV approaches, and it appears that over fairly long time scales, this approximation can become quite false, and even $r^{n}$ can cancel out.  
However, unlike the classical SAV approach, it is not difficult to force this equality without disturbing the energy stability result, since $r$ only appears in the flow with respect to $J$ and not in the flow with respect to $E$. This brings us to consider the following first order improved scheme:

\begin{equation} \label{eq:SAV_1st_improved}
\begin{cases} 
  \tilde{r}^{n} &=  \sqrt{J_{2}(\mu^n)} \\
  \frac{u^{n+1} - u^{n}}{\delta t} &=  -{L_1}^*  L_1 \mu^{n+1}  + \frac{r^{n+1}}{\sqrt{J_{2}(\mu^n)}} {L_2}^* L_2 \mu^{n}   , \\
  r^{n+1} - \tilde{r}^{n} &=  \frac{1}{2\sqrt{J_{2}(\mu^n)}} \langle  L_2  \mu^{n},   L_2 (\mu^{n+1} - \mu^n) \rangle .\\
 \mu^{n+1} &= \nabla E(u^{n+1}),
\end{cases}
\tag{\textbf{Mb-SAV 1}+}
\end{equation}
which can be computed in three steps: 
\begin{description}
\item[Step \#1:] Compute $(u^{n+1}_1, \mu^{n+1}_1)$, $(u^{n+1}_2, \mu^{n+1}_2)$ using \eqref{eq:u_1-mu_1} and \eqref{eq:u_2-mu_2}
\item[Step \#2:] Since $\tilde{r}^n=h_n(\mu^n)$, compute $r^{n+1}$ using 
$$r^{n+1} = \frac{  h_n(\mu_1^{n+1}) }{1 - h_n(\mu_2^{n+1})}$$

\item[Step \#3:]  Compute $u^{n+1}=u_1^{n+1} + r^{n+1} u_2^{n+1}$ and $\mu^{n+1} =   \mu_1^{n+1} + r^{n+1} \mu_2^{n+1}$
\end{description}

%
%
%
In the limit $\delta_t\to+ \infty$, this scheme ensures the property that $r^{n+1} > 0$. Indeed,
$$
 \begin{pmatrix}  u^{n+1}_1 \\  \mu^{n+1}_1   \end{pmatrix} = \begin{pmatrix} (A^* A)^{-1} A^* b \\ 0 \end{pmatrix} \quad\text{ and }\quad 
  \begin{pmatrix}  u^{n+1}_2 \\  \mu^{n+1}_2   \end{pmatrix} = \begin{pmatrix} (A^* A)^{-1} L_2^{*} L_2 \mu^n / \sqrt{J_{2}(\mu^n)}  \\   (L_1^* L_1)^{-1} L_2^{*} L_2 \mu^n / \sqrt{J_{2}(\mu^n)}  \end{pmatrix},
$$
therefore 
$$ h_n(\mu_2^{n+1}) =  \frac{\| L_2 L_1^{-1} L_2 \mu^n  \|^2}{\|  L_2 \mu^n  \|^2}.$$
Observe that, being $L$ invertible,
$$\forall x\not=0,\qquad \|Lx\|^2= \|L_1x\|^2- \|L_2x\|^2>0$$
therefore, being $L_1$ invertible,
$$\forall x\not=0,\quad \|L_2(L_1)^{-1}x\|<\|x\|.$$
It follows that $ h_n(\mu_2^{n+1}) <1$ and then $r^{n+1} > 0$ (in the limit $\delta_t\to+ \infty$).

In all numerical simulations, we observe that the non-negativity condition $r^{n+1} \geq 0$ is always satisfied but we have not been able so far to prove it rigorously. We emphasize that even if the property were not true in general, we could force it by simply replacing Step \#2 in the scheme with
$$r^{n+1} = \max\left(\frac{  h_n(\mu_1^{n+1}) }{1 - h_n(\mu_2^{n+1})},0\right)$$

%
%

\subsection{A second-order SAV scheme}   \label{sec:SAV2}  

We now consider the second-order discretization scheme:
\begin{equation}   \label{eq:SAV_2}
\begin{cases}
  \frac{u^{n+1} - u^{n}}{\delta t} &=  -{L_1}^*  L_1 \overline{\mu}^{n+1/2}  + \frac{ \overline{r}^{n+1/2}}{\sqrt{J_{2}(\mu^{n+1/2})}} {L_2}^* L_2  \mu^{n+1/2}    , \\
  r^{n+1} - r^{n} &=  \frac{1}{2\sqrt{J_{2}(\mu^{n+1/2})}} \langle  L_2 \mu^{n+1/2},   L_2 (\mu^{n+1} - \mu^n) \rangle .\\
 \mu^{n+1} &= \nabla E(u^{n+1}),
\end{cases}
\tag{\textbf{Mb-SAV 2}}
\end{equation}
where 
$$\mu^{n+1/2} := \frac{\mu^{n} + \tilde{\mu}^{n+1}}{2}, \quad  \overline{\mu}^{n+1/2} := \frac{\mu^{n+1} + \mu^{n}}{2}, \quad \overline{r}^{n+1/2}: =  \frac{r^{n+1} + r^{n}}{2}$$
and  $\tilde{\mu}^{n+1}$ is an approximation of order $1$ of $\mu$ at  time $t_{n+1}$
which can be obtained, for instance, using the previous SAV scheme \eqref{eq:SAV_1st} as a preliminary step.

\begin{proposition}   \label{th:decay_2ndSAV}
Let $(u^n,\mu^n,r^n)_{n\geq 0}$ be the sequence of solutions to \eqref{eq:SAV_2} starting from $(u^0,\mu^0,r^0)$ with $\mu^0=\nabla E(u^0)$ and $r^0= \sqrt{J_2(\mu^{0})}$. 
$$\forall n\in\N, \quad \bar r^{n+1/2}\geq 0~\Longrightarrow ~E(u^{n+1}) \leq E(u^{n}).$$

Furthermore, the functional $\tilde J$ as in \eqref{eq:def_Jtilde} by
$$ 
	 \tilde{J}(\mu,r) := \frac{1}{2} \| L_1 \mu\|^2 - r^2.
$$
satisfies
$$  \tilde{J}(u^{n+1},r^{n+1}) \leq \tilde{J}(u^{n},r^{n}).$$
\end{proposition}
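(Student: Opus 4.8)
The plan is to imitate the proof of Proposition~\ref{th:decay_1stSAV}, exploiting that in the setting~\eqref{def:JE} both $E$ and the relaxed mobility $\tilde J$ of~\eqref{eq:def_Jtilde} are quadratic, so that increments across a midpoint are exact, and that the first two equations of~\eqref{eq:SAV_2} are arranged precisely so that the $J_1$-dissipation cancels against the update of the scalar variable. I would treat the two assertions separately, the decay of $\tilde J$ first, since it is unconditional.

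\emph{Decay of $\tilde J$.} Writing $\overline{u}^{n+1/2}:=(u^{n+1}+u^{n})/2$ and using the exact identities $\frac{1}{2}(\|L_1\mu^{n+1}\|^2-\|L_1\mu^{n}\|^2)=\langle \mu^{n+1}-\mu^{n},L_1^{*}L_1\overline{\mu}^{n+1/2}\rangle$ and $(r^{n+1})^2-(r^{n})^2=2\,\overline{r}^{n+1/2}(r^{n+1}-r^{n})$, I substitute the second equation of~\eqref{eq:SAV_2} into the latter and obtain
$$\tilde J(\mu^{n+1},r^{n+1})-\tilde J(\mu^{n},r^{n})=\langle \mu^{n+1}-\mu^{n},\ L_1^{*}L_1\overline{\mu}^{n+1/2}-\frac{\overline{r}^{n+1/2}}{\sqrt{J_{2}(\mu^{n+1/2})}}\,L_2^{*}L_2\mu^{n+1/2}\rangle$$
By the first equation of~\eqref{eq:SAV_2} the vector in the right entry is exactly $-(u^{n+1}-u^{n})/\delta t$, so the difference equals $-\frac{1}{\delta t}\langle\nabla E(u^{n+1})-\nabla E(u^{n}),u^{n+1}-u^{n}\rangle=-\frac{1}{\delta t}\|A(u^{n+1}-u^{n})\|^2\le 0$ by convexity of $E$; no positivity of $r$ is used here.

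\emph{Decay of $E$.} Here I use that $\nabla E$ is affine, so $\overline{\mu}^{n+1/2}=\frac{1}{2}(\nabla E(u^{n+1})+\nabla E(u^{n}))=\nabla E(\overline{u}^{n+1/2})$, whence, $E$ being quadratic, $E(u^{n+1})-E(u^{n})=\langle\overline{\mu}^{n+1/2},u^{n+1}-u^{n}\rangle$ exactly. Substituting the first equation of~\eqref{eq:SAV_2} gives
$$\frac{1}{\delta t}(E(u^{n+1})-E(u^{n}))=-\|L_1\overline{\mu}^{n+1/2}\|^2+\frac{\overline{r}^{n+1/2}}{\sqrt{J_{2}(\mu^{n+1/2})}}\langle L_2\mu^{n+1/2},L_2\overline{\mu}^{n+1/2}\rangle$$
Using $\overline{r}^{n+1/2}\ge 0$ and Cauchy--Schwarz, the second term is $\le 2\,\overline{r}^{n+1/2}\sqrt{J_2(\overline{\mu}^{n+1/2})}$, and since $J=J_1-J_2\ge 0$ forces $\|L_2\mu\|\le\|L_1\mu\|$, hence $\|L_1\overline{\mu}^{n+1/2}\|^2\ge 2J_2(\overline{\mu}^{n+1/2})$, the claimed monotonicity $E(u^{n+1})\le E(u^{n})$ follows as soon as one establishes
$$\overline{r}^{n+1/2}\ \le\ \sqrt{J_{2}(\overline{\mu}^{n+1/2})}$$
(or already its weaker relative $\tilde J(\overline{\mu}^{n+1/2},\overline{r}^{n+1/2})\ge 0$), which plays here the role that $r^{n+1}\le\sqrt{J_2(\mu^{n+1})}$ played in Proposition~\ref{th:decay_1stSAV}.

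The step I expect to be the main obstacle is precisely this bound on $\overline{r}^{n+1/2}$. In the first-order case it came from the telescoping inequality $r^{n+1}-\sqrt{J_2(\mu^{n+1})}\le r^{n}-\sqrt{J_2(\mu^{n})}$, which closed because the update of $r$ was driven by $\mu^{n}$ and $\mu^{n+1}$ themselves, together with $r^0=\sqrt{J_2(\mu^0)}$. In~\eqref{eq:SAV_2} the update of $r$ is driven instead by the predictor $\mu^{n+1/2}=(\mu^{n}+\tilde{\mu}^{n+1})/2$, so the naive telescoping no longer closes: the Cauchy--Schwarz bound issued by the $r$-equation only yields the inconclusive $|r^{n+1}-r^{n}|\le\sqrt{J_2(\mu^{n+1})}+\sqrt{J_2(\mu^{n})}$, and passing from pointwise control of $r^{n},r^{n+1}$ to control at the midpoint pair works against the triangle inequality for the seminorm $\mu\mapsto\|L_2\mu\|$. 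One therefore has to combine the $r$-equation estimate carefully with the convexity of that seminorm and, plausibly, with the monotonicity of $\tilde J$ just obtained (which bounds $\tilde J(\mu^{n},r^{n})$ from above but not from below, so keeping $\tilde J$ nonnegative at the midpoint pair needs an extra argument). Once such a bound is secured, the remaining steps are routine Cauchy--Schwarz and arithmetic; and, as in the first-order discussion, if it should fail along some iterations one expects it can be enforced by a projection step without spoiling the stability proof.
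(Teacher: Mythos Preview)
Your treatment of the decay of $\tilde J$ is essentially identical to the paper's: both test the first equation of~\eqref{eq:SAV_2} against $\mu^{n+1}-\mu^{n}$, use the exact midpoint identities for the quadratic forms $\|L_1\cdot\|^2$ and $r^2$, and conclude via the monotonicity of $\nabla E$.

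For the decay of $E$ the approaches diverge slightly. You bound the cross term by Cauchy--Schwarz as $2\,\overline{r}^{n+1/2}\sqrt{J_2(\overline{\mu}^{n+1/2})}$ and then seek the pointwise control $\overline{r}^{n+1/2}\le\sqrt{J_2(\overline{\mu}^{n+1/2})}$ (or its relaxation $\tilde J(\overline{\mu}^{n+1/2},\overline{r}^{n+1/2})\ge 0$). The paper instead applies Young's inequality $2\,\overline{r}^{n+1/2}h_n(\overline{\mu}^{n+1/2})\le(\overline{r}^{n+1/2})^2+J_2(\overline{\mu}^{n+1/2})$ directly, which after regrouping yields
\[
\frac{1}{\delta t}\bigl(E(u^{n+1})-E(u^{n})\bigr)\ \le\ -\Bigl(J(\overline{\mu}^{n+1/2})+\tilde J(\overline{\mu}^{n+1/2},\overline{r}^{n+1/2})\Bigr),
\]
so the residual condition is the strictly weaker $J+\tilde J\ge 0$ at the midpoint pair rather than $\tilde J\ge 0$ alone. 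This is the one substantive technical difference, and it is worth adopting: it costs nothing and lowers what you would need to check.

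That said, the obstacle you flag is real, and the paper does \emph{not} resolve it either: its proof of the $E$-decay simply stops at the displayed inequality above without arguing that $J(\overline{\mu}^{n+1/2})+\tilde J(\overline{\mu}^{n+1/2},\overline{r}^{n+1/2})\ge 0$, and the subsequent decay of $\tilde J$ (which gives only an upper bound on $\tilde J(\mu^{n},r^{n})$, not a lower one) does not supply this. So your diagnosis that the midpoint control of $\overline{r}^{n+1/2}$ is the missing ingredient is accurate; the paper's route weakens the requirement but leaves the same gap open.
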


\begin{proof}
\noindent {\it Decay of $E$}: we first observe that
$$r^{n+1}-r^n = h_n(\mu^{n+1}) - h_n(\mu^{n})$$
where
$$h_n(\mu) = \frac{1}{2\sqrt{J_{2}(\mu^{n+1/2})}}  \langle  L_2\mu^{n+1/2},   L_2\mu \rangle.$$
Moreover, we remark that 
$$ h_n(\mu)  \leq   \frac{1}{2\sqrt{J_{2}(\mu^{n+1/2})}}  \| L_2 \mu^{n+1/2}] \|  ~ \| L_2 \mu  \| =  \sqrt{J_{2}(\mu)}.$$

In contrast with the first-order discretization scheme, and even when $r=0$, showing the decay of $E$ under a general convexity assumption for $E$ is not straightforward. To simplify the proof of stability, we assume in this case that $E$ is both convex and quadratic, which implies that it satisfies the following equality:
$${ E(u^{n+1}) + \langle (\nabla E(u^{n+1}) + \nabla E(u^{n}))/2 , u^{n} - u^{n+1} \rangle = E(u^{n})}.$$
In particular, $E$ is decreasing as soon as $ \langle (\nabla E(u^{n+1}) +\nabla E(u^{n}))/2  , u^{n+1} - u^{n} \rangle \leq 0$.

Multiplying the first equation by  $ \overline{\mu}^{n+1/2}$ and assuming that $\overline{r}^{n+1/2} \geq 0$, we obtain that

\begin{eqnarray*}
  \frac{1}{\delta_t} \langle (\nabla E(u^{n+1})  + \nabla E(u^{n+1}))/2 , u^{n+1} - u^{n} \rangle &=&  - \left(  \|  L_1[ \overline{\mu}^{n+1/2}] \|^2  - 2 \overline{r}^{n+1/2}  h_n( \overline{\mu}^{n+1/2})   \right),    \\
   &\leq&   - \left(  \|  L_1[ \overline{\mu}^{n+1/2}] \|^2  -  \left( J_2(\overline{\mu}^{n+1/2}) +  (\overline{r}^{n+1/2})^2 \right)     \right), \\
     &\leq&   - \left(   J\left(  \overline{\mu}^{n+1/2} \right) + \tilde{J}\left(  \overline{\mu}^{n+1/2},\overline{r}^{n+1/2}\right)    \right),\\
\end{eqnarray*}
where $\tilde{J}$ is defined as in \eqref{eq:def_Jtilde}.

\medskip
\noindent {\it Decay of $\tilde{J}$}: As $E$ is convex, we have that 
\begin{eqnarray*}
0 &\leq& \langle \nabla E(u^{n+1}) - \nabla E(u^{n}), u^{n+1} - u^{n} \rangle, \\ 
  &\leq&  \langle  \mu^{n+1} - \mu^{n},   -{L_1}^*  L_1 (\mu^{n+1} + \mu^n)/2  + \frac{(r^{n+1} + r^n)/2}{\sqrt{J_{2}(\mu^{n+1)}}} {L_2}^* L_2 \mu^{n+1/2}   \rangle, \\
  &\leq&  \frac{1}{2} \left( - \| L_1 \mu^{n+1} \|^2  +  \| L_1 \mu^{n} \|^2 \right) +   (r^{n+1})^2  - (r^{n})^2,  \\
  &\leq&  \tilde{J}(\mu^n,r^n) - \tilde{J}(\mu^{n+1},r^{n+1}),
\end{eqnarray*}
which implies the decay of $\tilde{J}$:
$$  \tilde{J}(\mu^{n+1},r^{n+1}) \leq  \tilde{J}(\mu^n,r^n).$$
 \end{proof}

Let us see now how the solution to scheme \eqref{eq:SAV_2} can be explicitly computed. We deduce from the first and third equations that 

$$
\begin{pmatrix} I_d  &  \frac{1}{2}\delta_t {L_1}^*  L_1 \\ - A^* A  & I_d  \end{pmatrix} 
\begin{pmatrix}  u^{n+1} \\  \mu^{n+1}   \end{pmatrix} =  \begin{pmatrix}  u^{n} - \frac{1}{2}\delta_t {L_1}^*  L_1 \mu^n \\  -A^* b   \end{pmatrix} + \overline{r}^{n+1/2}  \begin{pmatrix} \delta_t {L_2}^* L_2 \mu^{n+1/2}/\sqrt{J_{2}(\mu^{n+1/2})} \\  0   \end{pmatrix}.
$$
 The first step of the scheme computes
 $$\begin{pmatrix}  u^{n+1}_1 \\  \mu^{n+1}_1   \end{pmatrix} =  
\begin{pmatrix} I_d  & \frac{1}{2}\delta_t {L_1}^*  L_1 \\ - A^* A  & I_d  \end{pmatrix}^{-1} 
\begin{pmatrix}  u^{n} - \frac{1}{2}\delta_t {L_1}^*  L_1 \mu^n  \\ - A^*b   \end{pmatrix}$$
and
$$\begin{pmatrix}  u^{n+1}_2 \\  \mu^{n+1}_2   \end{pmatrix} =  
\begin{pmatrix} I_d  & \frac{1}{2}\delta_t {L_1}^*  L_1 \\ - A^* A  & I_d  \end{pmatrix}^{-1} 
\begin{pmatrix}  \delta_t {L_2}^* L_2 \mu^{n+1/2}/\sqrt{J_{2}(\mu^{n+1/2})} \\  0   \end{pmatrix},$$
so that $u^{n+1} =   u_1^{n+1} +\overline{r}^{n+1/2} u_2^{n+1}$ and $\mu^{n+1} =   \mu_1^{n+1} +\overline{r}^{n+1/2}\mu_2^{n+1}$.

The second step of the scheme estimates the value of $r^{n+1}$ using
$$   r^{n+1} - r^{n} =   h_n(\mu^{n+1}) - h_n(\mu^{n}) \text{ where } h_n(\mu) = \frac{1}{2\sqrt{J_{2}(\mu^{n+1/2})}}  \langle  L_2[\mu^{n+1/2}],   L_2[\mu] \rangle.$$
In particular, with $\mu^{n+1} =   \mu_1^{n+1} + \frac{r^{n+1}+r^{n}}{2} \mu_2^{n+1}$, we  deduce that 
$$
r^{n+1} = \frac{ r^{n} -h_n(\mu^{n}) +  h_n(\mu_1^{n+1}) + \frac{1}{2}r^n  h_n(\mu_2^{n+1}) }{1 - \frac{1}{2}h_n(\mu_2^{n+1})}
$$

As for the first order scheme, the positivity of $r^{n+1/2}$ is not guaranteed but can easily be imposed by setting
$$
r^{n+1} = \max \left( \frac{ r^{n} -h_n(\mu^{n}) +  h_n(\mu_1^{n+1}) + \frac{1}{2}r^n  h_n(\mu_2^{n+1}) }{1 - \frac{1}{2}h_n(\mu_2^{n+1})},0 \right)
$$

\subsection{A second-order improved SAV scheme}  \label{sec:SAV2impr}  
Just as we did for the first-order scheme, it is possible to improve the accuracy of the second-order SAV scheme by considering the following: 
\begin{equation}  \label{eq:SAV_2impr}
\begin{cases}
  \tilde{r}^{n} &= \sqrt{J_{2}(\mu^{n})} \\
  \frac{u^{n+1} - u^{n}}{\delta t} &=  -{L_1}^*  L_1 \overline{\mu}^{n+1/2}  + \frac{ \overline{r}^{n+1/2}}{\sqrt{J_{2}(\mu^{n+1/2})}} {L_2}^* L_2  \mu^{n+1/2}    , \\
  r^{n+1} - \tilde{r}^{n} &=  \frac{1}{2\sqrt{J_{2}(\mu^{n+1/2})}} \langle  L_2 \mu^{n+1/2},   L_2 (\mu^{n+1} - \mu^n) \rangle .\\
 \mu^{n+1} &= \nabla E(u^{n+1}),
\end{cases}
\tag{\textbf{Mb-SAV 2}+}
\end{equation}

With respect to first order, the stability is not perturbed, especially if we also impose the positivity of $r^{n+1}$ with 

$$
r^{n+1} = \max \left\{ \frac{h_n(\mu_1^{n+1}) + \frac{1}{2}r^n  h_n(\mu_2^{n+1}) }{1 - \frac{1}{2}h_n(\mu_2^{n+1})},0 \right\}. 
$$
This new improved scheme decreases $E$ like the other schemes.

\subsection{Numerical experiments}

We test the consistency of the numerical schemes described so far by considering  the following optimization problem defined for $u\in\R^2$:
\[
\min_u~ E(u) := \frac{1}{2}\| A u - b\|^2
\] 
for $A\in\R^{2\times 2}$ and $b\in\R^2$ given by:
\[
A = \begin{pmatrix} 0.25 & 0\\ 0 & 2 \end{pmatrix},\quad b = \begin{pmatrix} 0 \\ 0 \end{pmatrix}.
\]
We address the problem using the gradient flow structure \eqref{eq:CH_gen2} and the splitting \eqref{eq:L1L2} with
\[
L_1 = \begin{pmatrix} 1& 0\\ 0 & 1\end{pmatrix} \quad \text{ and }\quad L_2 = \begin{pmatrix} 0.5& -0.4\\ -0.4 & 0.5\end{pmatrix}.
\]
All schemes are initialised with $u^0=(0.1, 2)^t$.

We compare the performances of the following numerical schemes:
\begin{enumerate}
	\item The convex-concave splitting scheme \eqref{eq:Conv_concave1st} described at the beginning of Section \ref{sec:grad_flow};
	\item The 1st order SAV scheme \eqref{eq:SAV_1st} and its improved variant \eqref{eq:SAV_1st_improved} described in Sections \ref{sec:SAV} and \ref{sec:SAV_improved}, respectively;
	\item The 2nd order SAV scheme \eqref{eq:SAV_2}  and its improved variant \eqref{eq:SAV_2impr} described in Sections \ref{sec:SAV2} and \ref{sec:SAV2impr}, respectively.
\end{enumerate}

In the first test we validate the numerical consistency of the schemes by comparing the exact solution $u_{ex}$ with numerical solutions computed at a final time $T=5$ for ten different values of discretisation time step $\delta t \in [10^{-4}, 0.2]$. Our results are reported in Figure \ref{fig:orders}. We observe that the \eqref{eq:SAV_1st} scheme,  its improved version \eqref{eq:SAV_1st_improved} and the Cvx split scheme \eqref{eq:Conv_concave1st} show indeed a $O(\delta t)$ consistency, whereas the scheme \eqref{eq:SAV_1st_improved}  shows a smaller error. The second order consistency $O(\delta t^2)$ is obvious for schemes  \eqref{eq:SAV_2}  and \eqref{eq:SAV_2impr}. 

\begin{figure}[!t]
	\centering
	\includegraphics[height=7cm]{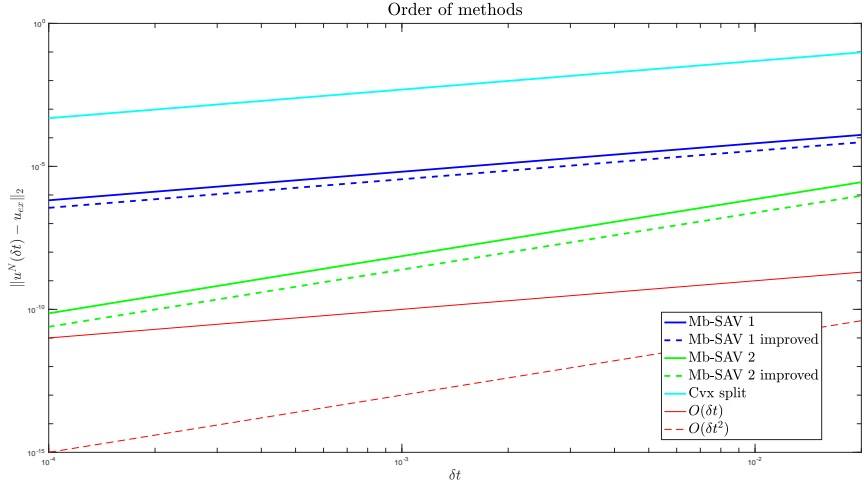}
	\caption{Consistency: for each method and for several values of $\delta_t$, plots of errors $\|u^N - u(T) \|$ with $T=5$, $N = T/\delta_t$.}
	\label{fig:orders}
\end{figure}

We now perform a stability analysis testing the numerical schemes above for different choices of the discretization time step
$\delta t \in \left\{0.1, 1, 4\right\}$.  Notice that  the time step $\delta_t = 4$ equals the CFL for the explicit scheme. For each choice of $\delta t$, we show the evolution of the trajectory of $u^n$ in the $x$-$y$ plane in comparison to the `exact' one computed at each time point by means of an exact (matrix exponential) solver in Figure \eqref{test_stability_p}. We also plot the evolution of the energy  $E(u^n)$ and the mobility $J(\mu^n)$ along iterations with $T=200$ in Figure \ref{test_stability_E} and \ref{test_stability_J}, respectively.

Interestingly, we observe in Figure \ref{test_stability_p} that for small time steps  $\delta t$ all methods show the decay of both the energy $E$ and the mobility $J$,  as well as an accurate approximation of the trajectory, while as $\delta t$ increases trajectories become fuzzier with possible significant deviations from the exact one. A better approximation of the trajectory can be  observed for the SAV approaches which clearly appear more accurate than that computed by the Cvx scheme. Note, however, that as $\delta t$ increases, first-order methods tend to be more effective than second-order approaches, which tend to give oscillations. 

In Figure \ref{test_stability_E}, the decay of the energy $E$ is observed independently of the time step, in accordance with Propositions \ref{th:decay_1stSAV} and \ref{th:decay_2ndSAV}. Note that the same does not hold for Cvs split solutions (cyan line) for which an increase of the energy values is observed in the early iterations. This clearly demonstrates the interest of SAV approaches for stabilizing the numerical behaviour.

As shown in Figure \ref{test_stability_J},  $J$ is not guaranteed to decrease along the iterations as such property holds only for its relaxed versions $\displaystyle\tilde{J}$ defined in \eqref{eq:def_Jtilde}, as shown in Figure \ref{test_stability_Jrelax}.

Lastly, we show in Figure\ref{test_approx_rn} the evolution of the numerical approximation $r^n$ of $\sqrt{J_2(\mu^n)}$ along iterations. Again, the approximation is accurate for small time steps ($\delta_t=0.1$) but, for time steps comparable with the CFL condition ($\delta_t=1$ and $\delta_t=4$), the approximation is of lower quality. Furthermore, $r^n$ remains positive for the first-order SAV scheme, but cancels out quite fast for the second-order scheme. The latter fact implies that, for excessively large time steps, the second-order SAV scheme flows with respect to the gradient structure of $J_1$, instead of $J$.

These numerical experiments show clearly that, in contrast with the classical splitting scheme \eqref{eq:Conv_concave1st}, the SAV approaches guarantee that the decay of the energy $E$, even for very large time steps. We also always observe the decay of the relaxed mobility $\tilde J$, but not always the decay of $J$, in particular for second-order schemes. For time steps below the CFL condition (equal for these experiments to $0.1$) second-order SAV approaches seem more accurate than first-order SAV approaches, i.e. the error between the iterate $u^n$ and the limit is smaller. But for time steps above the CFL condition, first-order SAV approaches become more accurate, at least on these examples, and we recommend to use them for large time steps instead of second-order SAV approaches.

\begin{figure}[!htbp]
	\centering 
	\includegraphics[height=4cm]{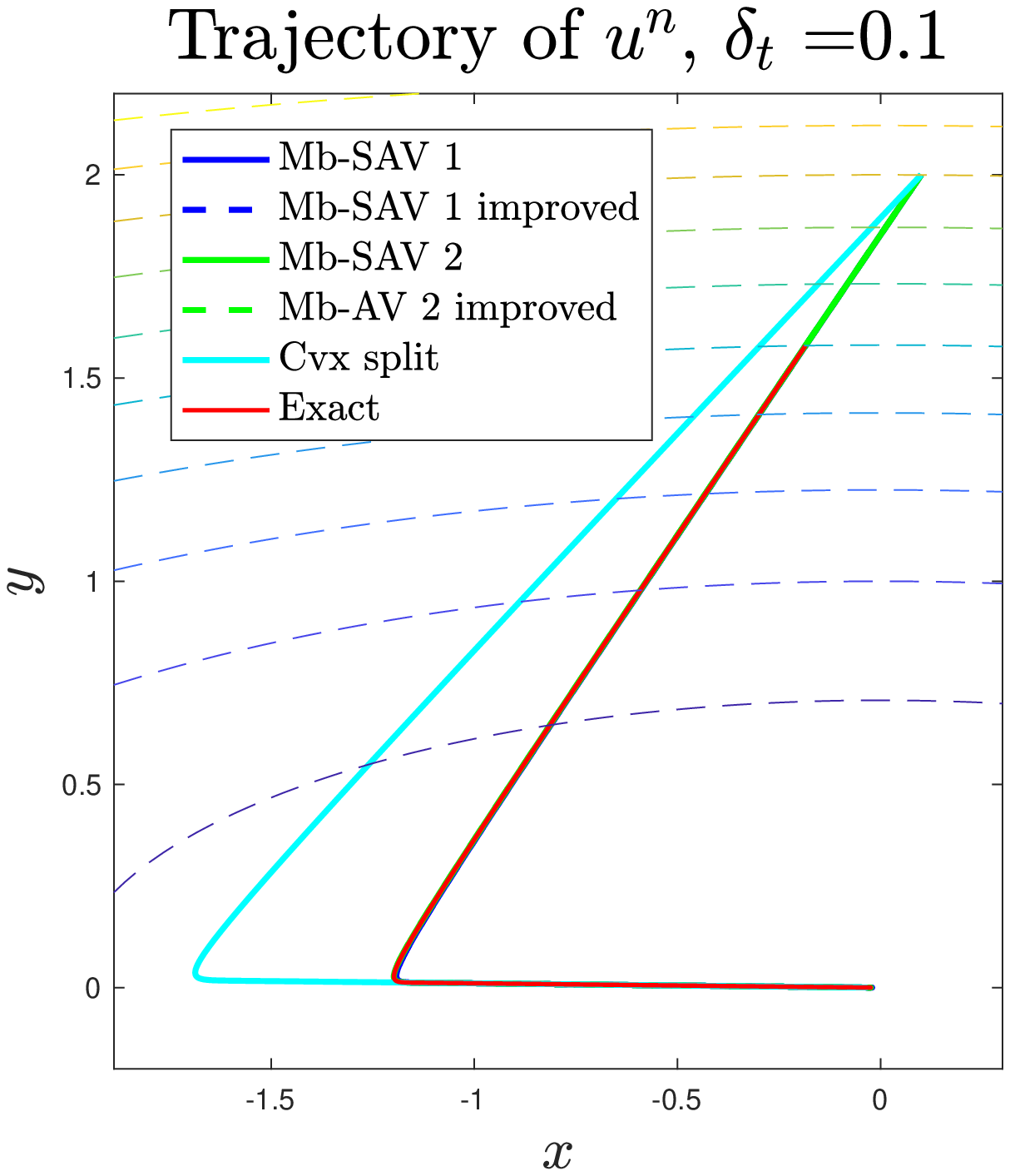}
	\includegraphics[height=3cm]{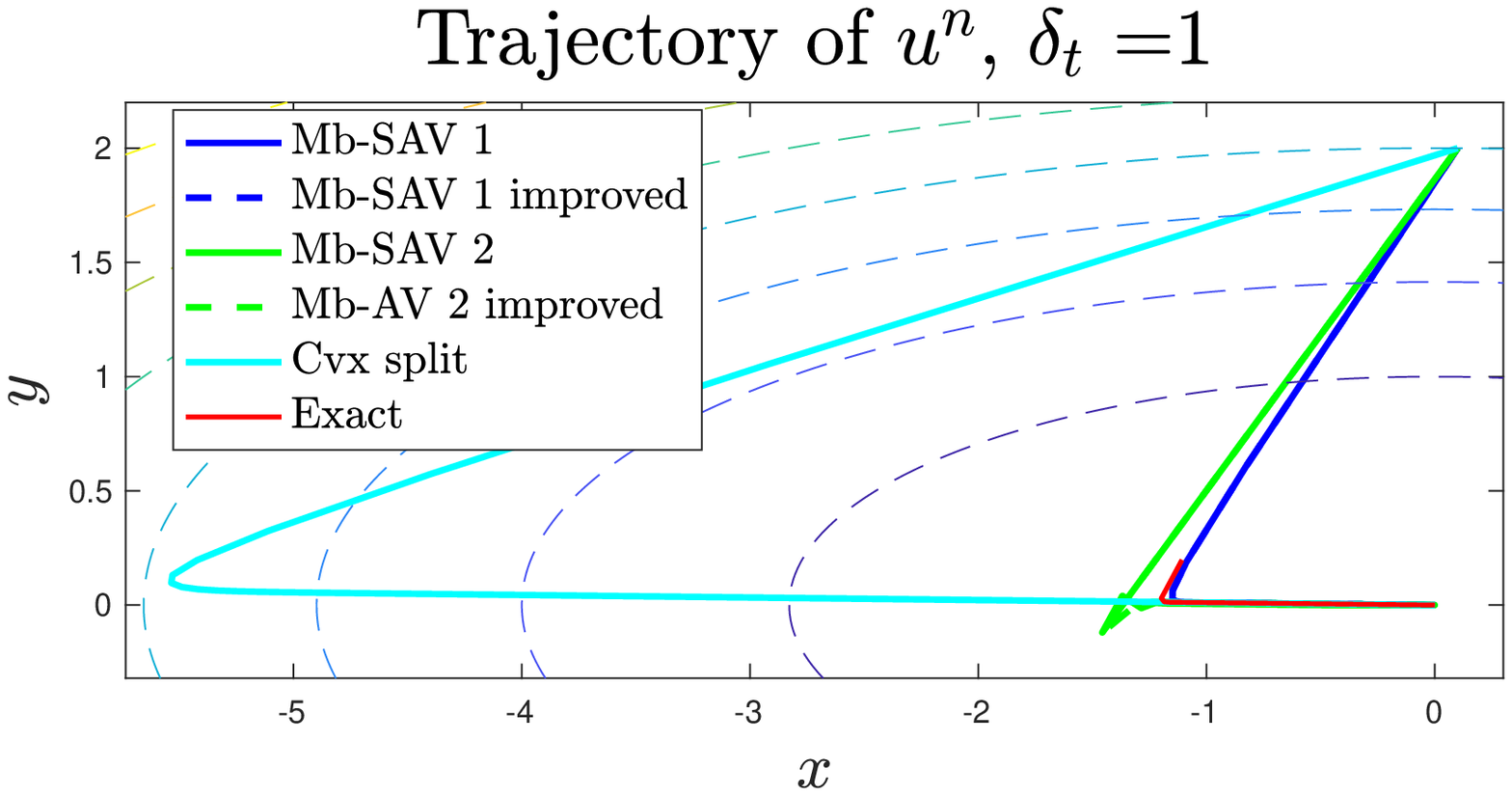}
	\includegraphics[height=3cm]{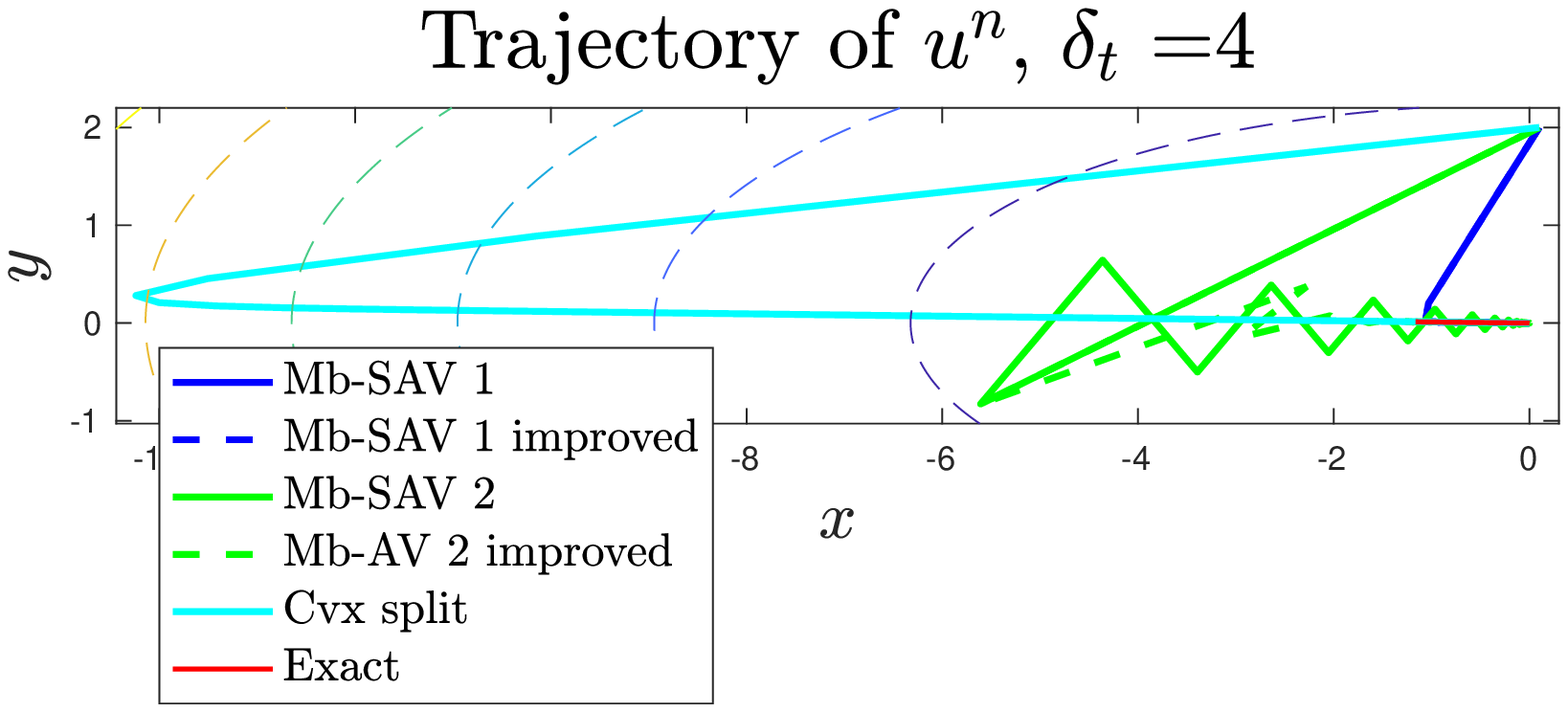}
	\caption{Trajectories of  $u^n$ along iterations for $\delta_t\in\{0.1,1,4\}$.}
	\label{test_stability_p}
	
\end{figure}

\begin{figure}[!htbp]
	\centering
	\includegraphics[width=0.32\textwidth]{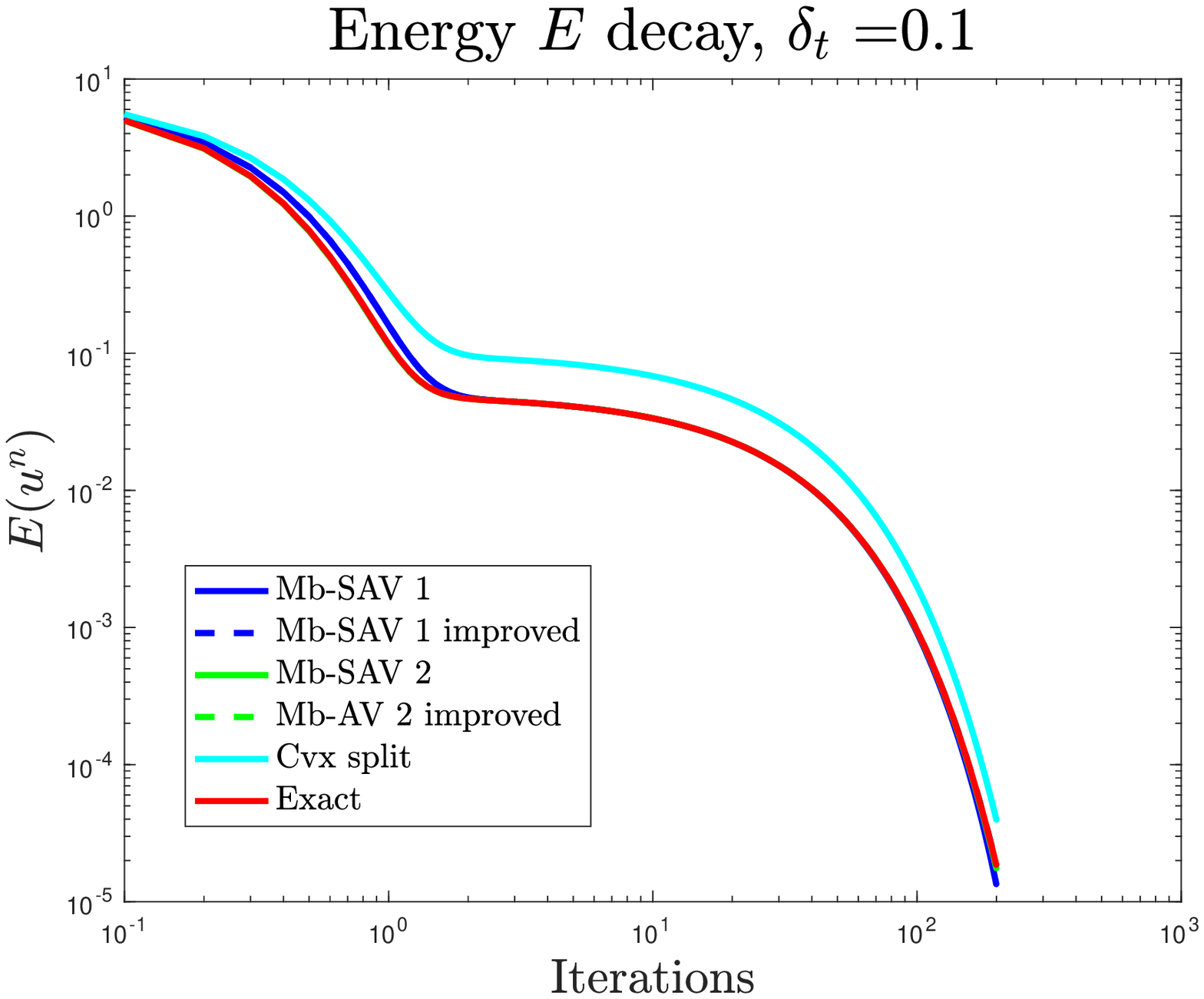}
	\includegraphics[width=0.32\textwidth]{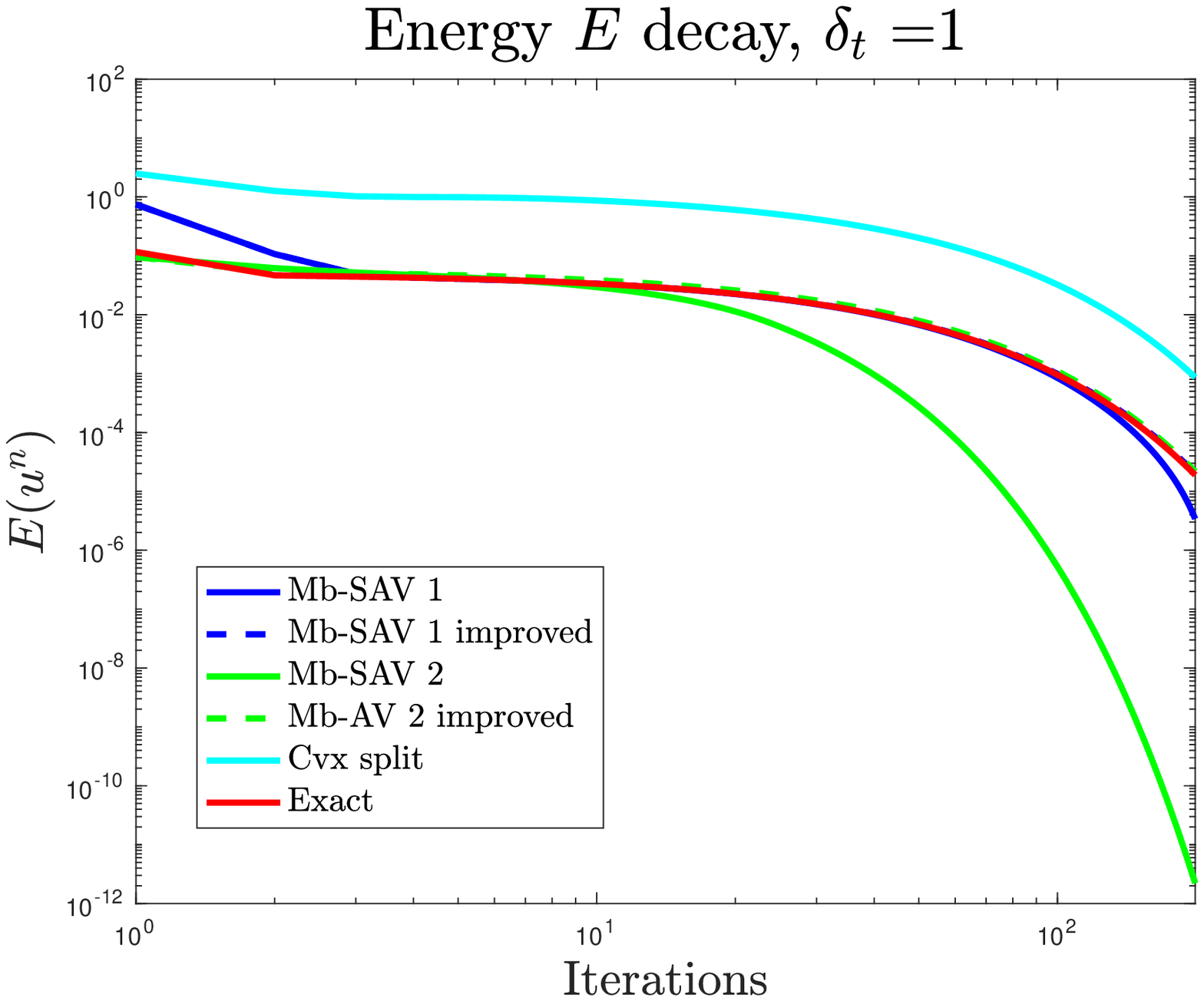}
	\begin{tikzpicture}[spy using outlines={circle,yellow,magnification=3.5,size=1.9cm, connect spies}]
		\node[anchor=south west,inner sep=0]  at (0,0) 
		{\includegraphics[width=0.32\textwidth]{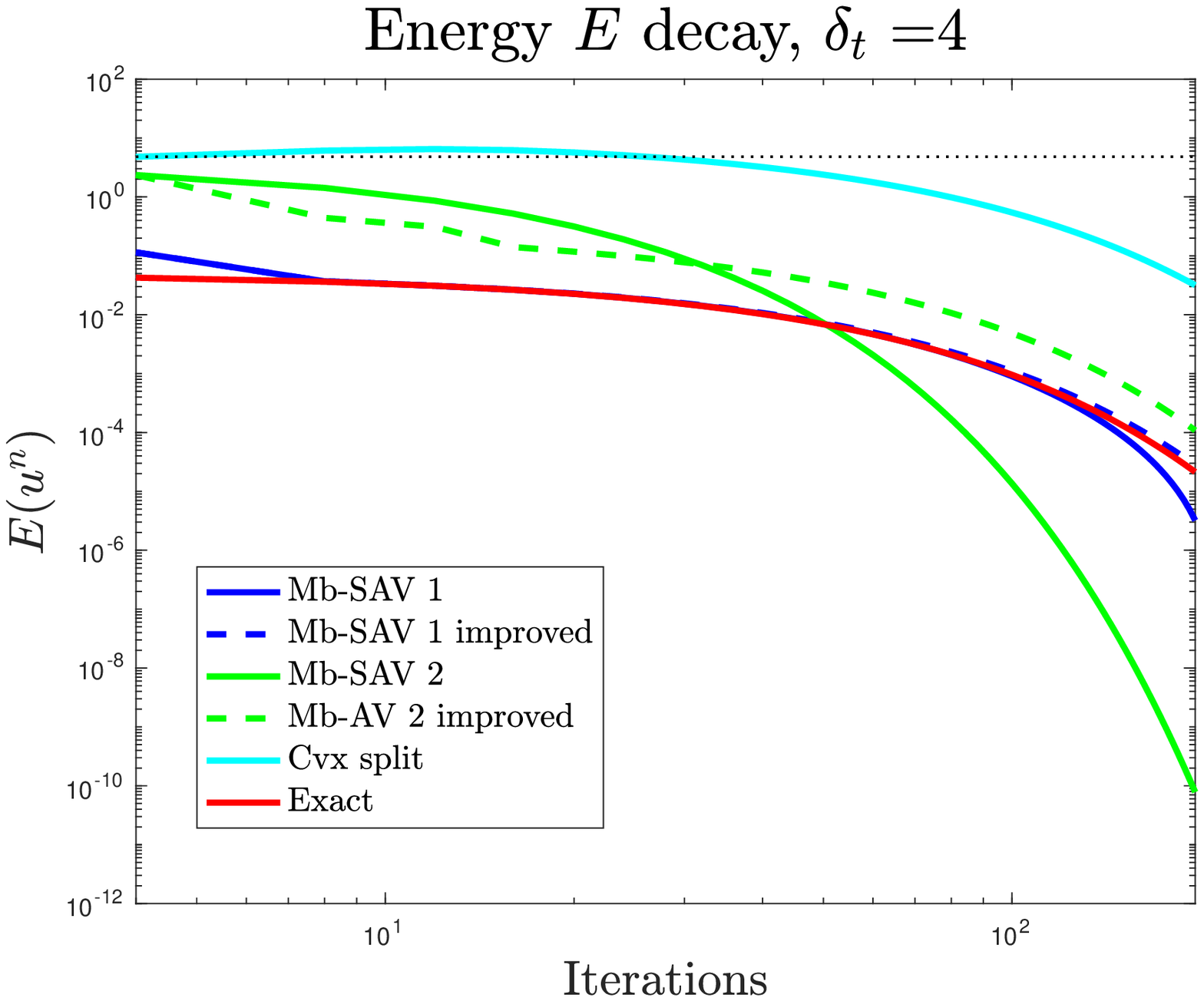}};
		\spy on (1.5,3.7) in node [left] at (4.8,1.8);
	\end{tikzpicture}
	\caption{Evolution of $E$ along iterations for $\delta_t\in\{0.1,1,4\}$. For $\delta_t =4$, the black dotted line $y=E_{\textrm{cvx}}(u^0)$ is plotted as a reference to show the increase of $E$ for the Cvx split scheme.}
	\label{test_stability_E}
\end{figure}

\begin{figure}[!htbp]
	\centering
	\includegraphics[width=0.32\textwidth]{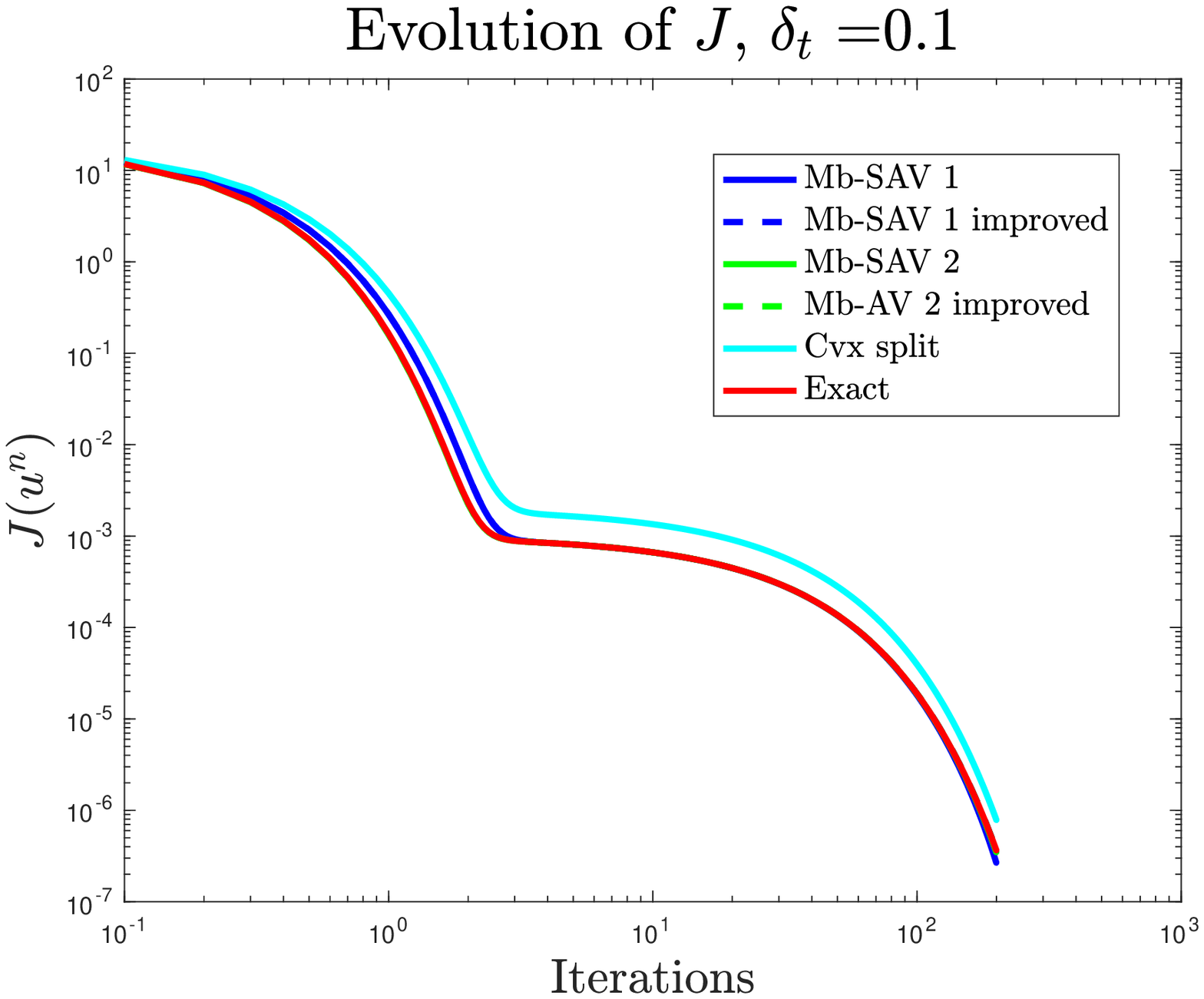}
		\begin{tikzpicture}[spy using outlines={circle,yellow,magnification=3.5,size=1.6cm, connect spies}]
		\node[anchor=south west,inner sep=0]  at (0,0) 
		{	\includegraphics[width=0.32\textwidth]{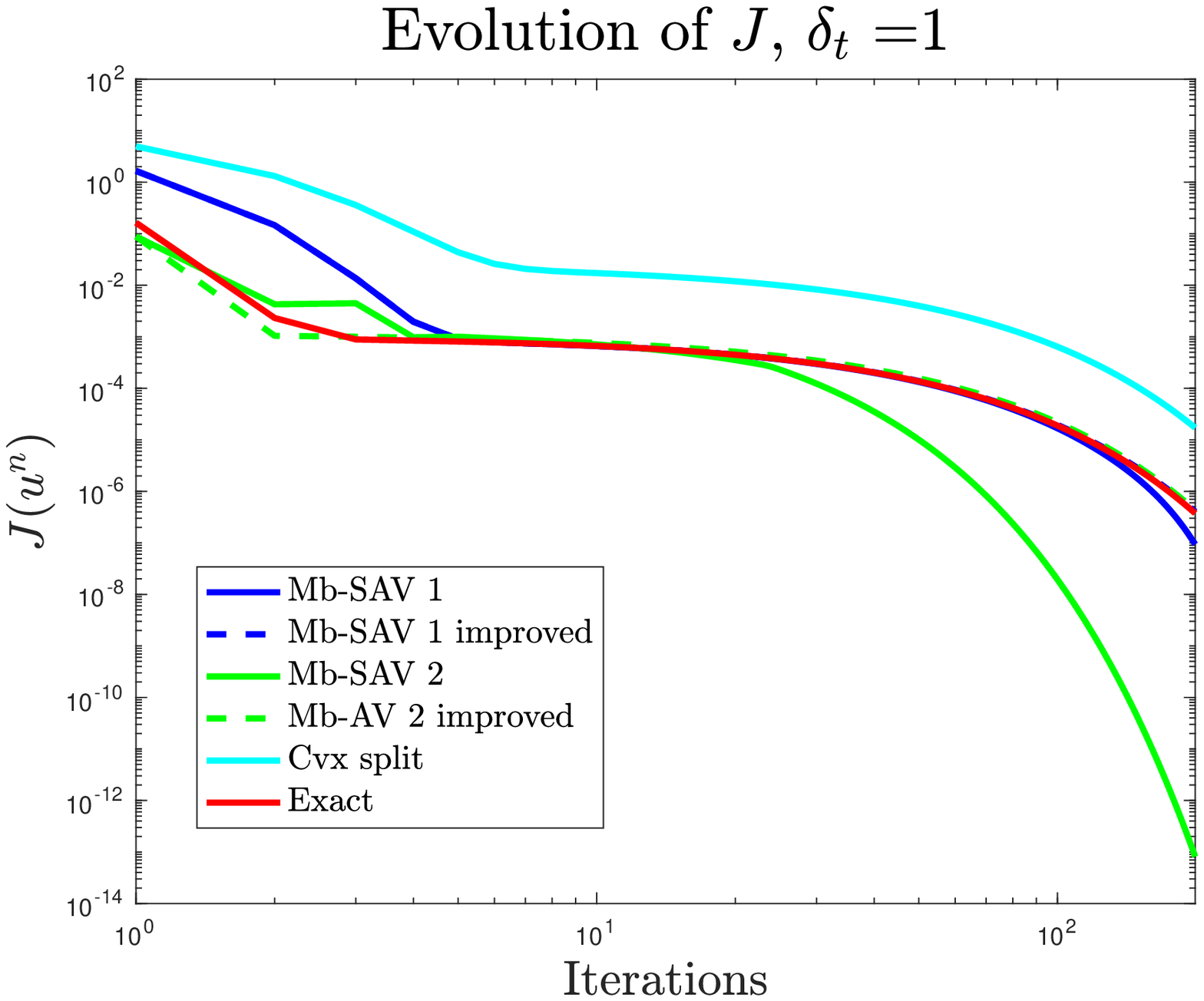}};
		\spy on (1.3,3) in node [left] at (4.8,1.4);
	\end{tikzpicture}
	\includegraphics[width=0.32\textwidth]{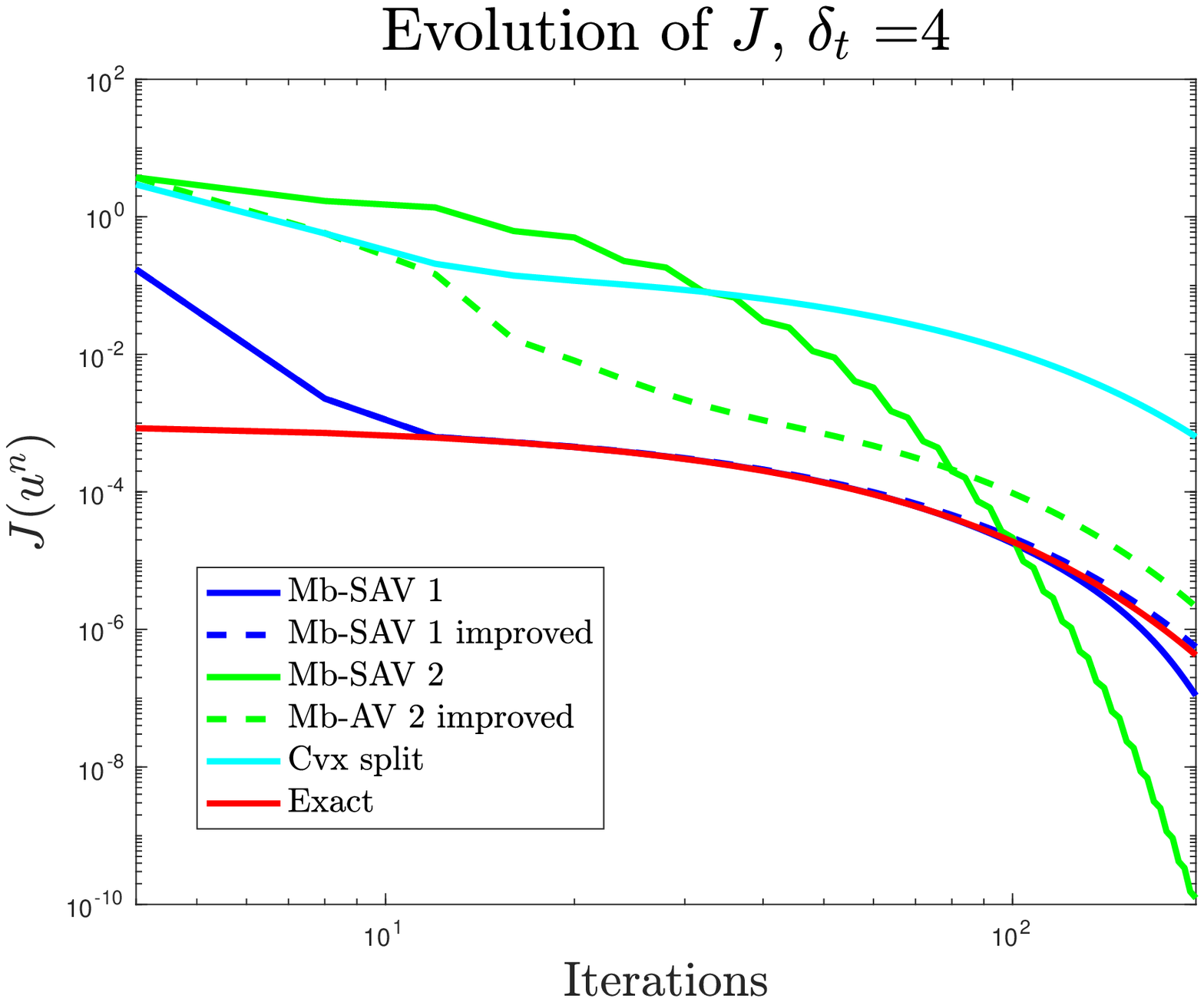}
	\caption{Evolution of $J$ along iterations for $\delta_t\in\{0.1,1,4\}$.}
	\label{test_stability_J}
\end{figure}

\begin{figure}[!htbp]
	\centering
	\includegraphics[width=0.32\textwidth]{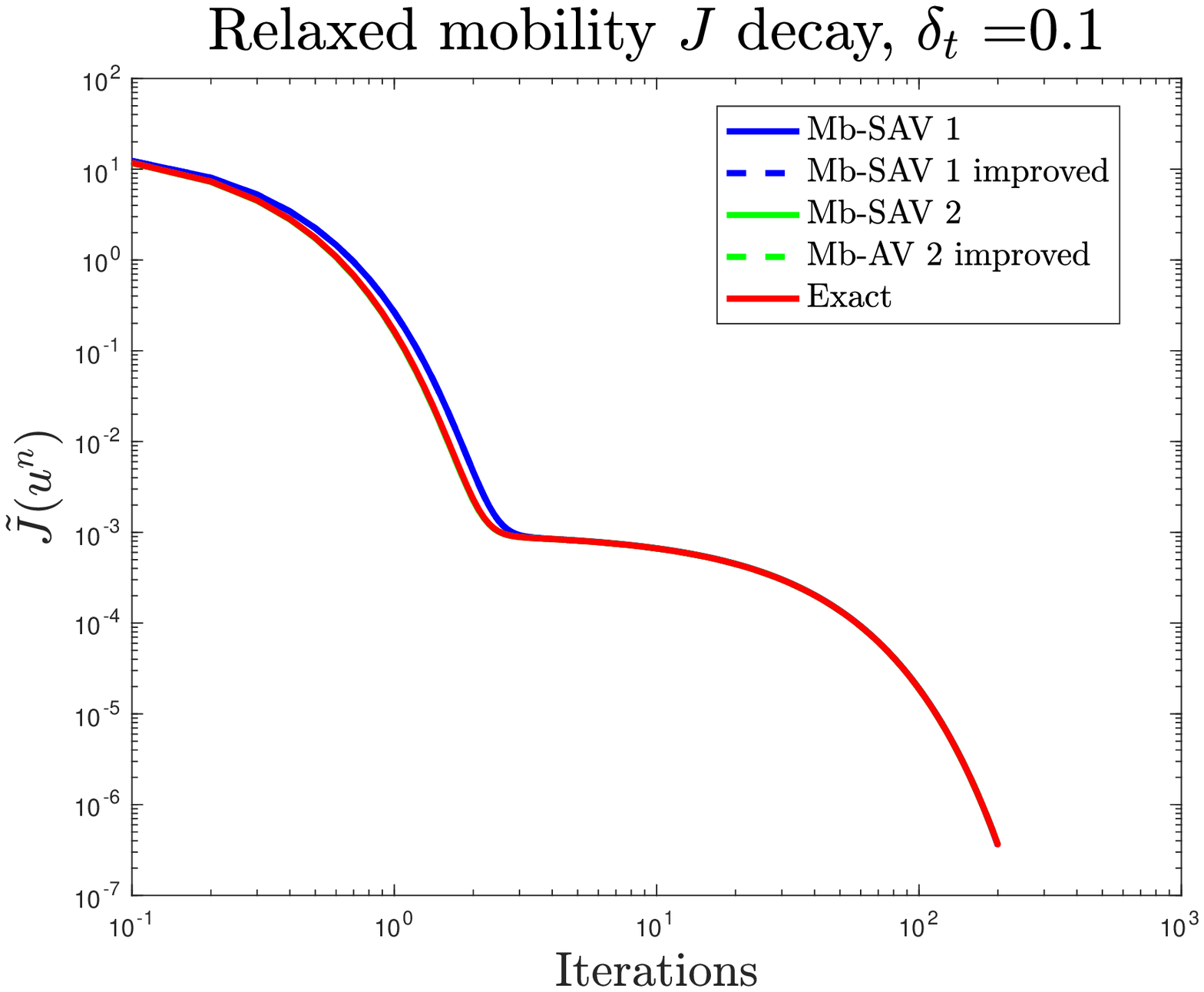}
	\includegraphics[width=0.32\textwidth]{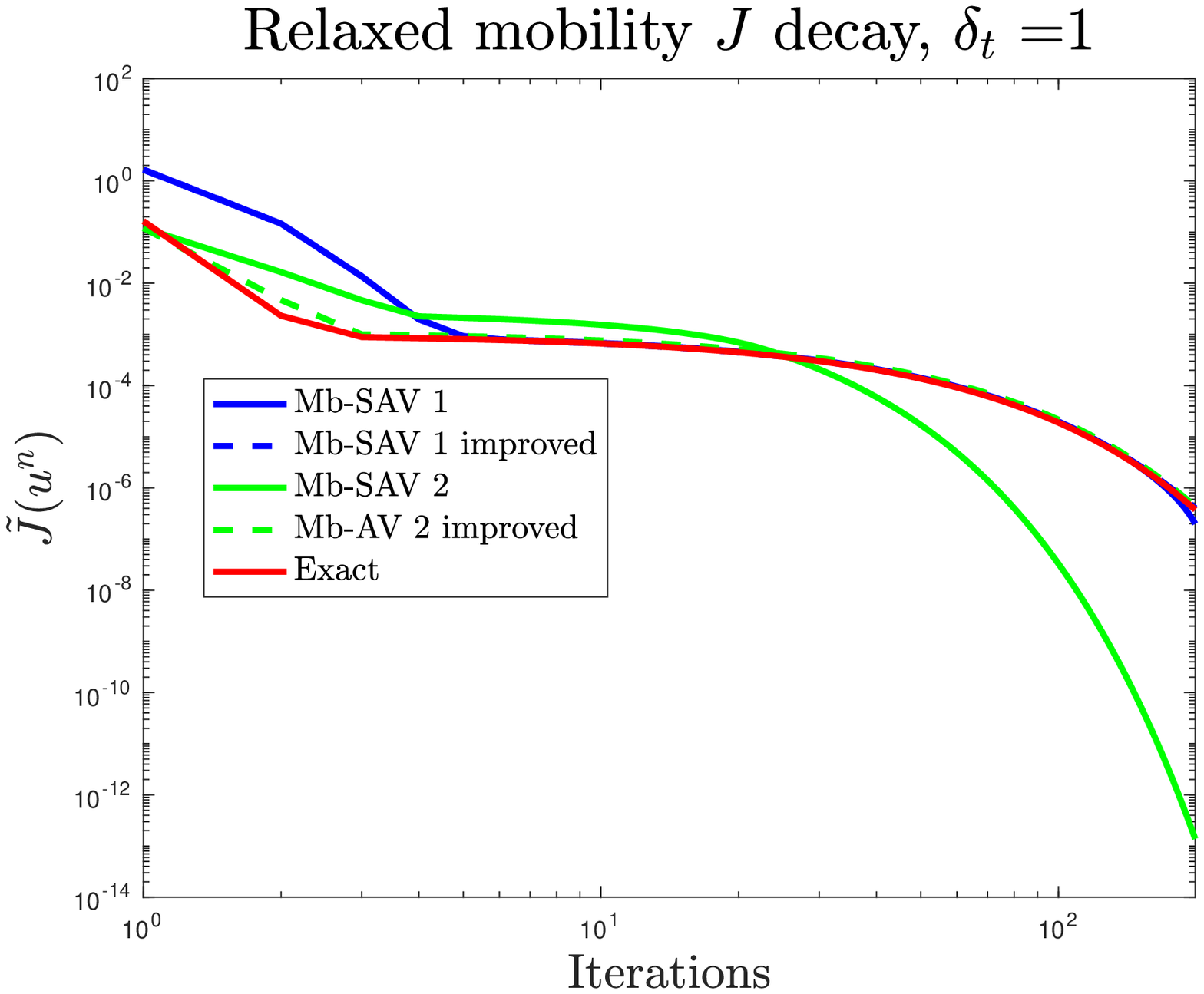}
	\includegraphics[width=0.32\textwidth]{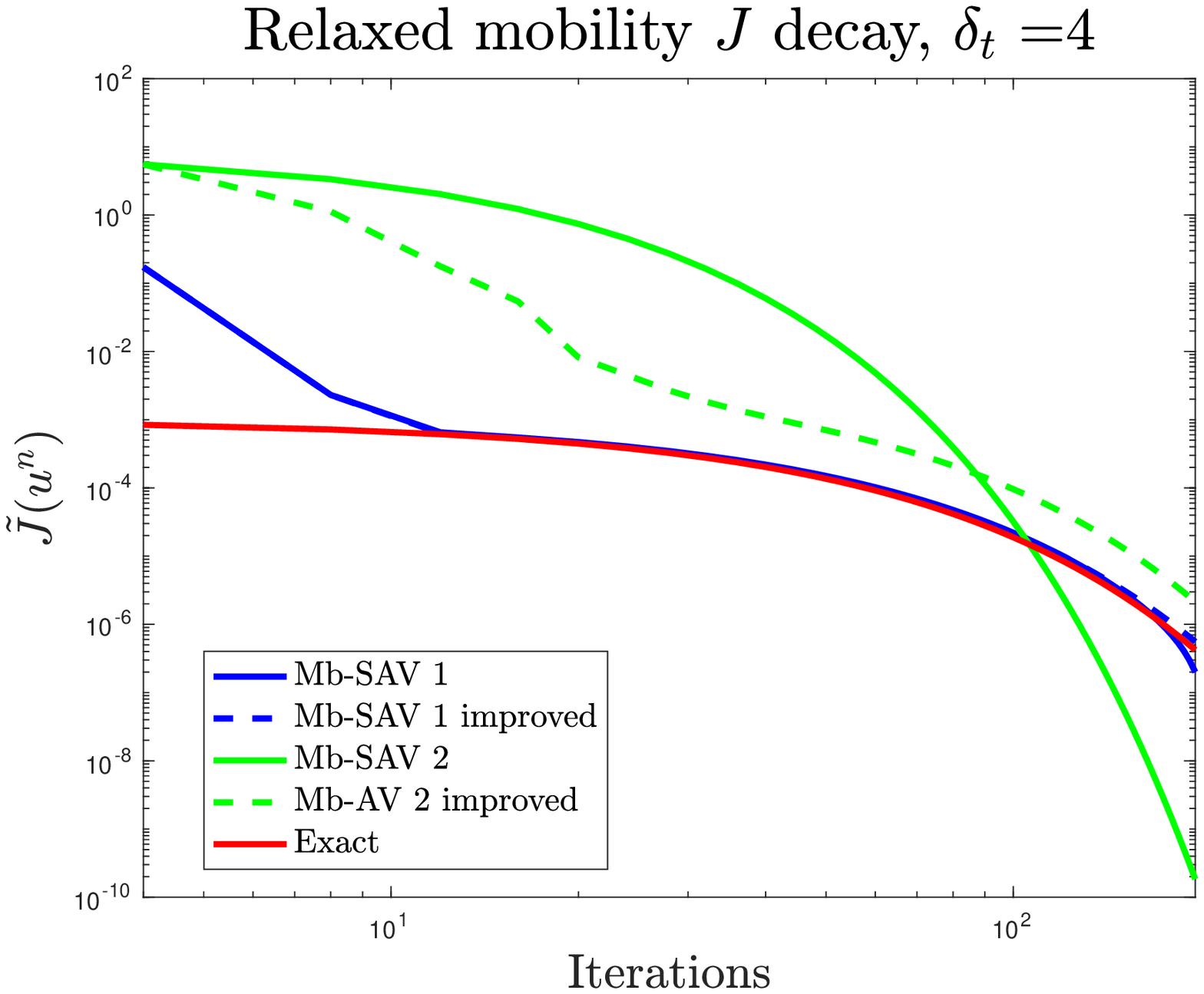}
	\caption{Evolution of $\tilde{J}$ in \eqref{eq:def_Jtilde} along iterations for $\delta_t\in\{0.1,1,4\}$.}
	\label{test_stability_Jrelax}
\end{figure}

\begin{figure}[!htbp]
	\centering
	\includegraphics[width=0.32\textwidth]{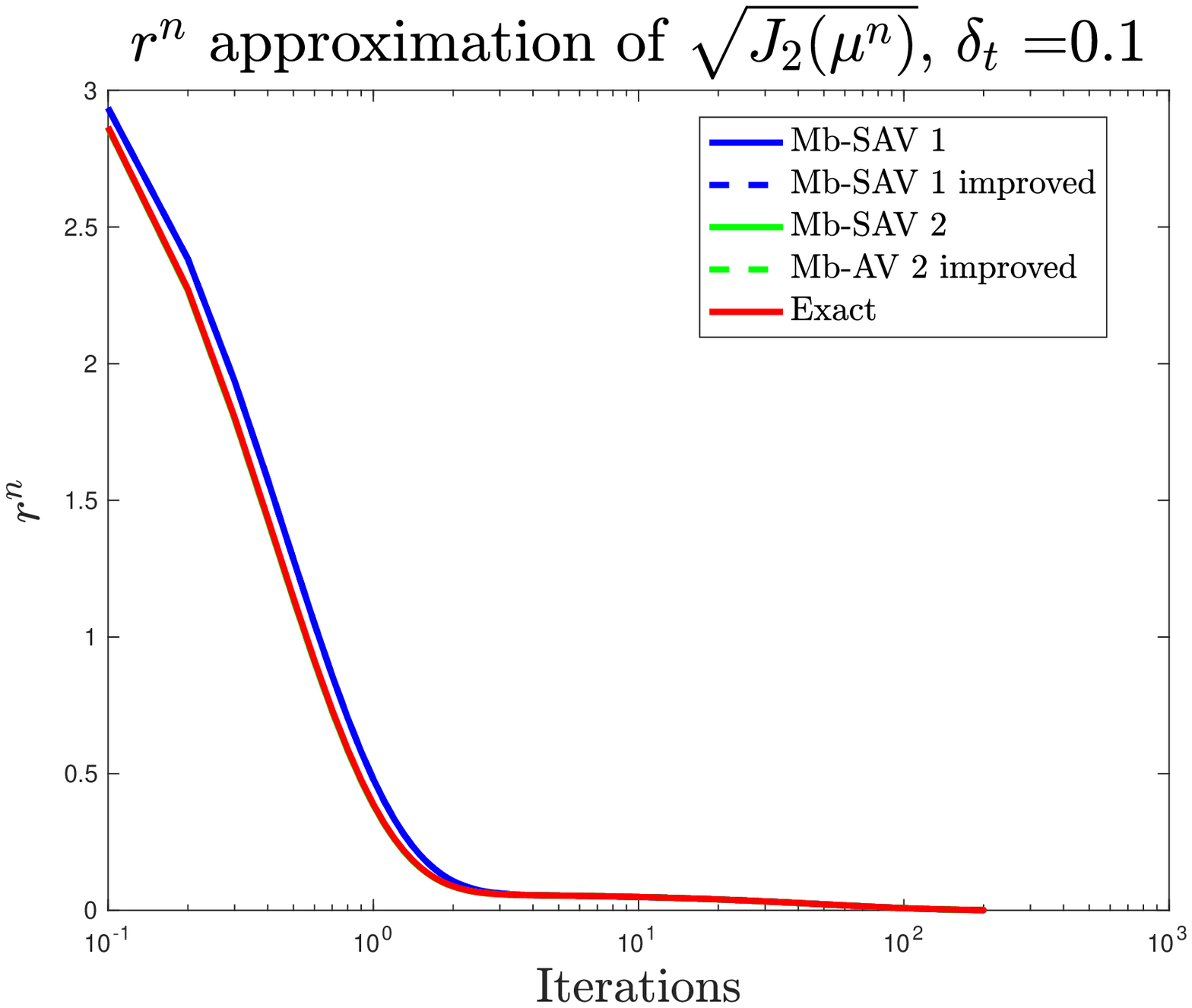}
	\includegraphics[width=0.32\textwidth]{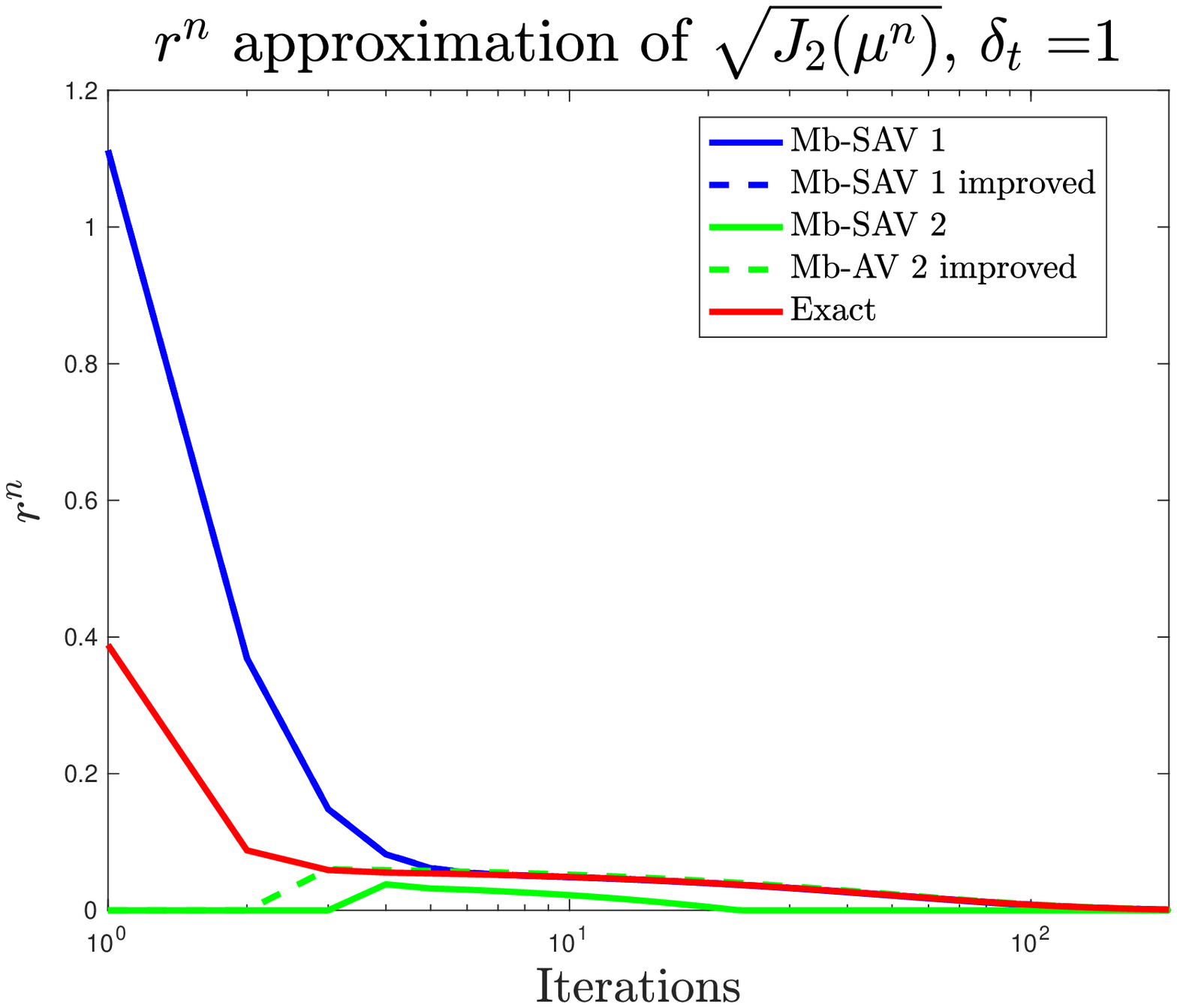}
	\includegraphics[width=0.32\textwidth]{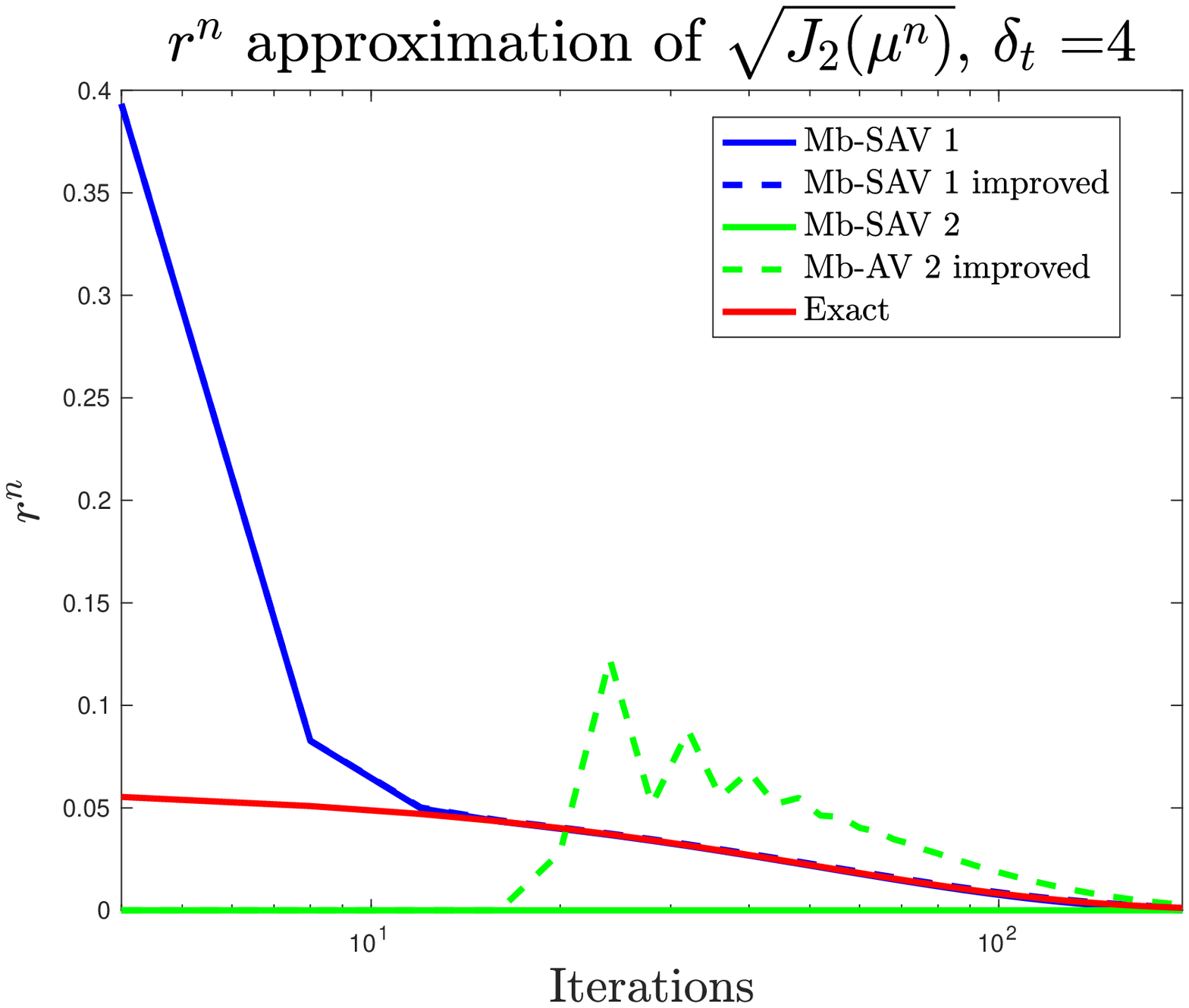}
	\caption{Quality of the approximation $r^n\approx \sqrt{J_2(\mu^n)}$ for the different schemes along iterations for $\delta_t\in\{0.1,1,4\}$.}
	\label{test_approx_rn}
\end{figure}

%
%
%

\section{Application  to Cahn-Hilliard models}  \label{sec:applic_CH}
The application of the SAV numerical schemes discussed in the previous section to the Cahn-Hilliard models with general mobilities \eqref{eq:MCH} and \eqref{eq:NMN_CH}
is quite immediate as it is based on the following observations:
\begin{itemize}
\item The mobility $J_{M,N,u}$ depends on $u$ and evolves as the flow of:
$$J_{M,N,u}(\mu) =  \int_Q \frac{M(u)}{2} |\nabla (N(u) \mu)| ^2 dx.$$
Recalling the discussion in Section \ref{eq:cvx-conc_split}, we can thus set up the SAV schemes by considering the splitting
\begin{equation}  \label{eq:split_NMN_CH_J}
J_{M,N,u}(\mu) = J_{M,N,u,1}(\mu) - J_{M,N,u,2}(\mu),
\end{equation}
with $ J_{M,N,u,1}$ defined as in \eqref{eq:def_m_MCH} and \eqref{eq:def_e1_NMNCH}, and $ J_{M,N,u,2}$ defined as in \eqref{eq:def_e2_MCH} and \eqref{eq:def_e2_NMNCH} for the  \eqref{eq:MCH} and \eqref{eq:NMN_CH}, respectively.

\item  The Cahn-Hilliard energy  $P_{\varepsilon}$ in \eqref{eq:P_epsilon} is neither convex nor quadratic.
Hence, we define an energy $E$ which evolves along iterations depending on the estimate $u^n$ at time $t_n$, i.e. we consider: 
$$  E_n(u) := \frac{\overline{P}_{\varepsilon,u^{n}}(u)}{\varepsilon}  :=\frac{1}{\varepsilon}(P_{\varepsilon,c}(u) + P_{\varepsilon,e}(u^n) +
\langle \nabla_u  P_{\varepsilon,e}(u^n)  , (u - u^n) \rangle),$$
where
$$ P_{\varepsilon,c}(u) =  \frac{1}{2}\int_Q \varepsilon |\nabla u|^2  + \frac{\alpha}{\varepsilon} u^2  dx \quad\text{ and } \quad P_{\varepsilon,e}(u) =  \int_{Q} \frac{1}{\varepsilon} (W(u) - \alpha \frac{u^2}{2})dx.$$
\end{itemize}

The adaptation of \eqref{eq:SAV_1st} to this setting is:
\begin{equation}  \label{eq:SAV_CH_NMN}
\begin{cases}
 \tilde{r}^{n} &= \sqrt{J_{M,N,u^{n},2}(\mu^n)} \\
 (u^{n+1} - u^{n})/dt &= - \nabla J_{M,N,u^{n},1}(\mu^{n+1})  + \frac{r^{n+1}}{\sqrt{J_{M,N,u^{n},2}(\mu^n)}}\nabla J_{M,N,u^{n},2}(\mu^{n}) \\
 r^{n+1} - \tilde{r}^{n} &= \frac{1}{2 \sqrt{J_{M,N,u^{n},2}(\mu^n)} } \displaystyle\int_Q \nabla J_{M,N,u^{n},2}(\mu^n) ( r^{n+1} - \tilde{r}^{n}) dx, \\
 \mu^{n+1} &= \nabla \overline{P}_{\varepsilon,u^{n}}(u^{n+1})/\varepsilon.
\end{cases}
\tag{\textbf{CH-Mb-SAV1}}
\end{equation}
By Proposition \ref{th:decay_1stSAV}, we deduce that
$$ \overline{P}_{\varepsilon,u^{n}}(u^{n+1}) \leq \overline{P}_{\varepsilon,u^{n}}(u^{n}).$$
The convex-concave splitting of $P_{\varepsilon}$
further entails that $P_{\varepsilon}(u^{n+1}) \leq P_{\varepsilon,u^{n}}(u^{n+1})$, so that using the property
$P_{\varepsilon}(u^{n}) = P_{\varepsilon,u^{n}}(u^{n})$,
we deduce that the Cahn-Hilliard energy is non-increasing along the iterations, that is:
$$ 
P_{\varepsilon}(u^{n+1}) \leq P_{\varepsilon}(u^{n}). 
$$

\begin{remark}
 Note that since $\overline{P}_{\varepsilon,u^n}$ is an approximation of $P_{\varepsilon}$, the use of SAV schemes of order $2$ does not appear useful here.  For extending the result to order $2$, first an approximation of $\tilde{u}^{n+1}$ at time $t_{n+1}$ should be considered and then used to define an approximation $u^{n+1/2} = (u^{n} + \tilde{u}^{n+1})/2$ and an energy: 
 $$  E_n(u) := \overline{P}_{\varepsilon,u^{n+1/2}}(u)/\varepsilon$$
 However, the stability analysis renders more difficult in this case. Applying an analogous argument as above entails only the decay of
 $\overline{P}_{\varepsilon,u^{n+1/2}}$ from which the decay of $P_{\varepsilon}$ cannot be easily deduced.
 A different idea would be to employ a second-order SAV relaxation for both the mobility and the energy $P_{\varepsilon}$. Although this choice may indeed lead to a simpler proof of stability, the corresponding numerical scheme would be more difficult.
\end{remark}

\subsection{Application to the \eqref{eq:MCH} model}
Let us now specify \eqref{eq:SAV_CH_NMN} for the \eqref{eq:MCH} model. We recall that in this case:
$$ J_{M,N,u,1}(\mu)=J_{M,u,1}(\mu) =  \int_Q \frac{m}{2} |\nabla \mu|^2dx\quad \text{ and }\quad J_{M,N,u,2}(\mu)=J_{M,u,2}(\mu) =  \int_Q \frac{m - M(u)}{2} |\nabla \mu|^2dx,$$
so that  \eqref{eq:SAV_CH_NMN} takes the form:
\begin{equation}  \label{eq:SAV_MCH}
\begin{cases}
  \tilde{r}^n &= \sqrt{J_{M,u^n,2}(\mu^n)} \\
  \frac{u^{n+1} - u^{n}}{\delta_t} &= m \Delta \mu^{n+1} - \frac{r^{n+1}}{\sqrt{J_{M,u^n,2}(\mu^n)}} \div((m - M(u^n)) \nabla \mu^{n})\\ 
  r^{n+1} - \tilde{r}^{n} &=  \displaystyle\frac{1}{2 \sqrt{J_{M,u^n,2}(\mu^n)}} \int_Q (m-M(u^n)) \nabla \mu^n \cdot \nabla (\mu^{n+1} - \mu^n) dx.\\
  \mu^{n+1} &=   - \Delta u^{n+1} + \frac{\alpha}{\epsilon^2} u^{n+1} + \frac{1}{\epsilon^2} \left(W'(u^{n}) - \alpha u^{n} \right),
\end{cases}
\tag{\textbf{M-CH-SAV1}}
\end{equation}
which shows that $(u^{n+1},\mu^{n+1})$ is solution of
$$
\begin{pmatrix} I_d  & -\delta_t m \Delta \\ \Delta - \alpha/\varepsilon^2 I_d & I_d  \end{pmatrix} 
\begin{pmatrix}  u^{n+1} \\  \mu^{n+1}   \end{pmatrix} =  \begin{pmatrix}  u^n \\   \frac{1}{\varepsilon^2}( W'(u^{n}) - \alpha u^n)  \end{pmatrix}   + r^{n+1} \begin{pmatrix}  - \frac{1}{\sqrt{J_{M,u^n,2}(\mu^n)}} \div((m - M(u^n)) \nabla \mu^{n})  \\   0  \end{pmatrix}.
$$
The problem above can be solved with the following two steps:
\begin{enumerate}
 \item Solve two linear systems by noticing that:
 $$u^{n+1} = u^{n+1}_1 + r^{n+1} u_2^{n+1}\quad  \text{ and }\quad  \mu^{n+1}=\mu^{n+1}_1 + r^{n+1} \mu_2^{n+1}  $$
where
$$ \begin{pmatrix}  u_1^{n+1} \\  \mu_1^{n+1}   \end{pmatrix} =  \begin{pmatrix} I_d  & - \delta_t m \Delta \\ \Delta - \alpha/\varepsilon^2 I_d & I_d  \end{pmatrix}^{-1}   \begin{pmatrix}  u^n \\    \frac{1}{\varepsilon^2}( W'(u^{n}) - \alpha u^n) \end{pmatrix},$$
and
$$
\begin{pmatrix}  u_2^{n+1} \\  \mu_2^{n+1}   \end{pmatrix}   =  \begin{pmatrix} I_d  & - \delta_t m \Delta \\  \Delta - \alpha/\varepsilon^2 I_d & I_d  \end{pmatrix}^{-1}  \begin{pmatrix}  - \frac{1}{\sqrt{J_{M,u^n,2}(\mu^n)}} \div((m - M(u^n)) \nabla \mu^{n})  \\   0  \end{pmatrix}.
$$
\item Estimate  $r^{n+1}$ by:
$$ r^{n+1} = \frac{ h_n(\mu_1^{n+1})}{ 1 -  h_n(\mu_2^{n+1})}\quad \text{ where } \quad h_n(\mu) := \frac{1}{2 \sqrt{J_{M,u^n,2}(\mu^n)}} \int_Q (m-M(u^n)) \nabla \mu^n \cdot \nabla \mu dx $$
 
\end{enumerate}

\subsection{Application to the \eqref{eq:NMN_CH} model}

For the \eqref{eq:NMN_CH} model, the scheme \eqref{eq:SAV_CH_NMN} can be specified analogously by considering the splitting \eqref{eq:split_NMN_CH_J} with:
$$ J_{M,N,u,1}(\mu) =  \int_Q \frac{m}{2} |\nabla (\mu)|^2 + \frac{\beta}{2} |\mu|^2 dx $$
and, for $N(u)=1/\sqrt{M(u)}$,
$$J_{M,N,u,2}(\mu) :=   \int_Q \frac{m}{2} |\nabla (\mu)|^2 + \frac{\beta}{2} |\mu|^2 - \frac{1}{2} M(u) | \nabla (N(u) \mu) |^2  dx $$
The corresponding scheme \eqref{eq:SAV_CH_NMN} takes the form:
\begin{equation} \label{eq:SAV_NMNCH}
\begin{cases}
  \tilde{r}^{n} &= \sqrt{J_{M,N,u^n,2}(\mu^n)} \\
  \frac{u^{n+1} - u^{n}}{\delta t} &=   \div(m \nabla \mu^{n+1})- \beta \mu^{n+1} \\
  &  ~ - \frac{r^{n+1}}{\sqrt{J_{M,N,u^n,2}(\mu^{n})}} \left( \div(m \nabla \mu^{n})  - \beta \mu^{n}  -    N(u^{n}) \div(M(u^{n}) \nabla(N(u^{n}) \mu^{n}) \right), \\
  r^{n+1} - \tilde{r}^{n} &=  h_n(\mu^{n+1}) -   h_n(\mu^{n})\\
 \mu^{n+1} &=  - \Delta u^{n+1} + \frac{\alpha}{\epsilon^2} u^{n+1}  + \frac{1}{\epsilon^2} \left(W'(u^{n}) - \alpha u^{n} \right),
\end{cases}
\tag{\textbf{NMN-CH-SAV1}}
\end{equation}
with
$$  h_n(\mu)  := \frac{1}{2\sqrt{J_{M,N,u^n,2}(\mu^n)}} \int_Q m \nabla \mu^{n} \cdot \nabla \mu +  \beta \mu^n \mu -  M(u^n) \nabla (N(u^n)\mu^n) \cdot \nabla (N(u^n) \mu ) dx, $$
and $ h_n(\mu^{n}) =  \sqrt{J_{M,N,u^n,2}(\mu^n)}$. Just as previously, this scheme can be solved in two steps
\begin{enumerate}
 \item Solve two linear systems by noticing that
 $$u^{n+1} = u^{n+1}_1 + r^{n+1} u_2^{n+1} \quad  \text{ and } \quad \mu^{n+1}= \mu^{n+1}_1 + r^{n+1} \mu_2^{n+1},  $$
where
$$ \begin{pmatrix}  u_1^{n+1} \\  \mu_1^{n+1}   \end{pmatrix} =  \begin{pmatrix} I_d  & - \delta_t (m \Delta - \beta I_d) \\ + \Delta - \alpha/\varepsilon^2 I_d & I_d  \end{pmatrix}^{-1}   \begin{pmatrix}  u^n \\    \frac{1}{\varepsilon^2}( W'(u^{n}) - \alpha u^n) \end{pmatrix},$$
and
$$
\begin{pmatrix}  u_2^{n+1} \\  \mu_2^{n+1}   \end{pmatrix}   =  \begin{pmatrix} I_d  & - \delta_t (m \Delta - \beta I_d) \\  \Delta - \alpha/\varepsilon^2 I_d & I_d  \end{pmatrix}^{-1}  \begin{pmatrix}  - \frac{ \left[ \div(m \nabla \mu^{n})  - \beta \mu^{n}  -    N(u^{n}) \div(M(u^{n}) \nabla(N(u^{n}) \mu^{n})) \right]}{\sqrt{J_{M,N,u^n,2}(\mu^{n})}}  \\   0  \end{pmatrix}.
$$
\item Estimate $r^{n+1}$ by:
$$ r^{n+1} = \frac{ h_n(\mu_1^{n+1})}{ 1 -  h_n(\mu_2^{n+1})}.
$$
 
\end{enumerate}

\subsection{Numerical experiments}
We now report a few numerical experiments using the schemes \eqref{eq:MCH} and \eqref{eq:NMN_CH}. The codes used for these experiments are available on the freely accessible GitHub repository \footnote{\url{https://github.com/lucala00/SAV_Cahn_Hilliard.git}}.

\subsubsection{Asymptotic expansion and flow: numerical comparison of the models}
The first numerical example is inspired from \cite{bretin2020approximation,refId0}
and  concerns the evolution of an initial connected set. The objective of this test is to show that the numerical solutions computed by \eqref{eq:MCH} and \eqref{eq:NMN_CH} are similar to those obtained  in \cite{bretin2020approximation}.  For both models,
we plot in Figure~\ref{fig_test1} the numerical phase field $u^n$ computed at different times. 
Each experiment is performed using the same numerical parameters: $N_1=N_2=2^8$, $\varepsilon = 2/N$, $\delta_t = \varepsilon^4$, $\alpha = 2/\varepsilon^2$, $m=1$, and $\beta= 2/\varepsilon^2$. The first, second and third line of Figure~\ref{fig_test1} show the approximate solutions computed by \eqref{eq:SAV_MCH} and \eqref{eq:SAV_NMNCH}, respectively. For both schemes, the stationary limit appears to correspond to a ball of the same mass as that of the initial set. 

The numerical flows obtained by by \eqref{eq:SAV_MCH} and \eqref{eq:SAV_NMNCH} are very similar and,  according to the asymptotic analysis of \cite{bretin2020approximation},  they are close to the surface diffusion flow.  
In the first two plots of Figure~\ref{fig_test1_profile}, we show  the slice $x_1 \mapsto u^T(x_1,0)$  of the solution computed at the final time $T = 10^{-4}$, where the middle plot represents a zoom in the right transition zone of the slice. The profile associated to the \eqref{eq:MCH} model is plotted in red and clearly shows that the solution $u^T$ does not remain in $[0,1]$  with an overshoot of order $O(\varepsilon)$.  In contrast, the profile obtained using the \eqref{eq:NMN_CH} model (in green) appears closer to the optimal profile $q(d(\cdot)/\varepsilon)$ and almost remains in $[0,1]$ up to an error of order $O(\varepsilon^2)$, see Section \ref{sec:doubly_mob}.
The last plot of Figure~\ref{fig_test1_profile} shows the decreasing of the Cahn-Hilliard energy $P_\varepsilon$
along the flow for each model. \\

These numerical experiments show the good approximation properties of both phase-field models.   Moreover, in spite of the apparent complexity of the \eqref{eq:NMN_CH} model, the proposed SAV approach allows to obtain relatively simple schemes with equivalent complexity for both models.

\begin{figure}[htbp]
\centering
	\includegraphics[width=3.5cm]{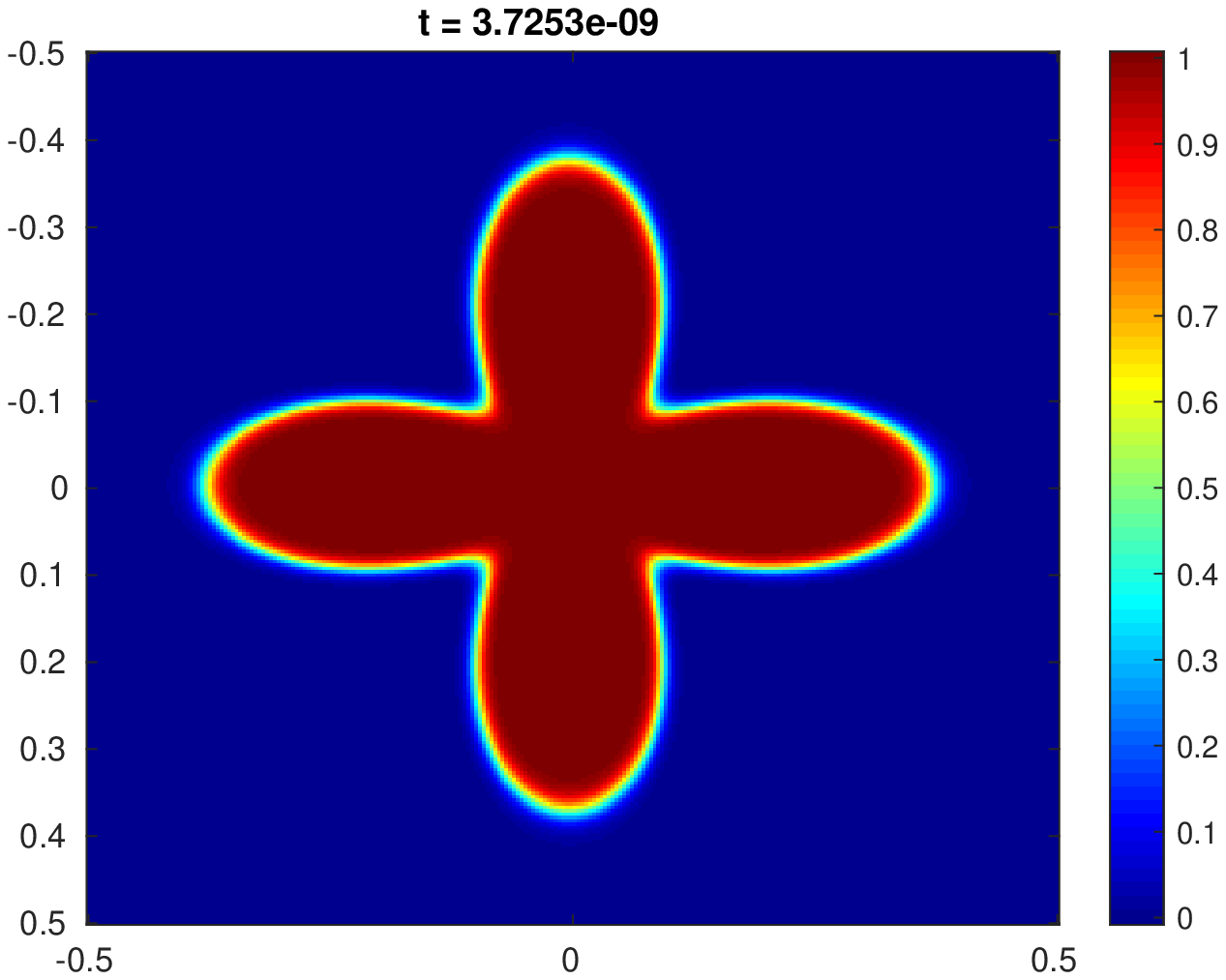}
	\includegraphics[width=3.5cm]{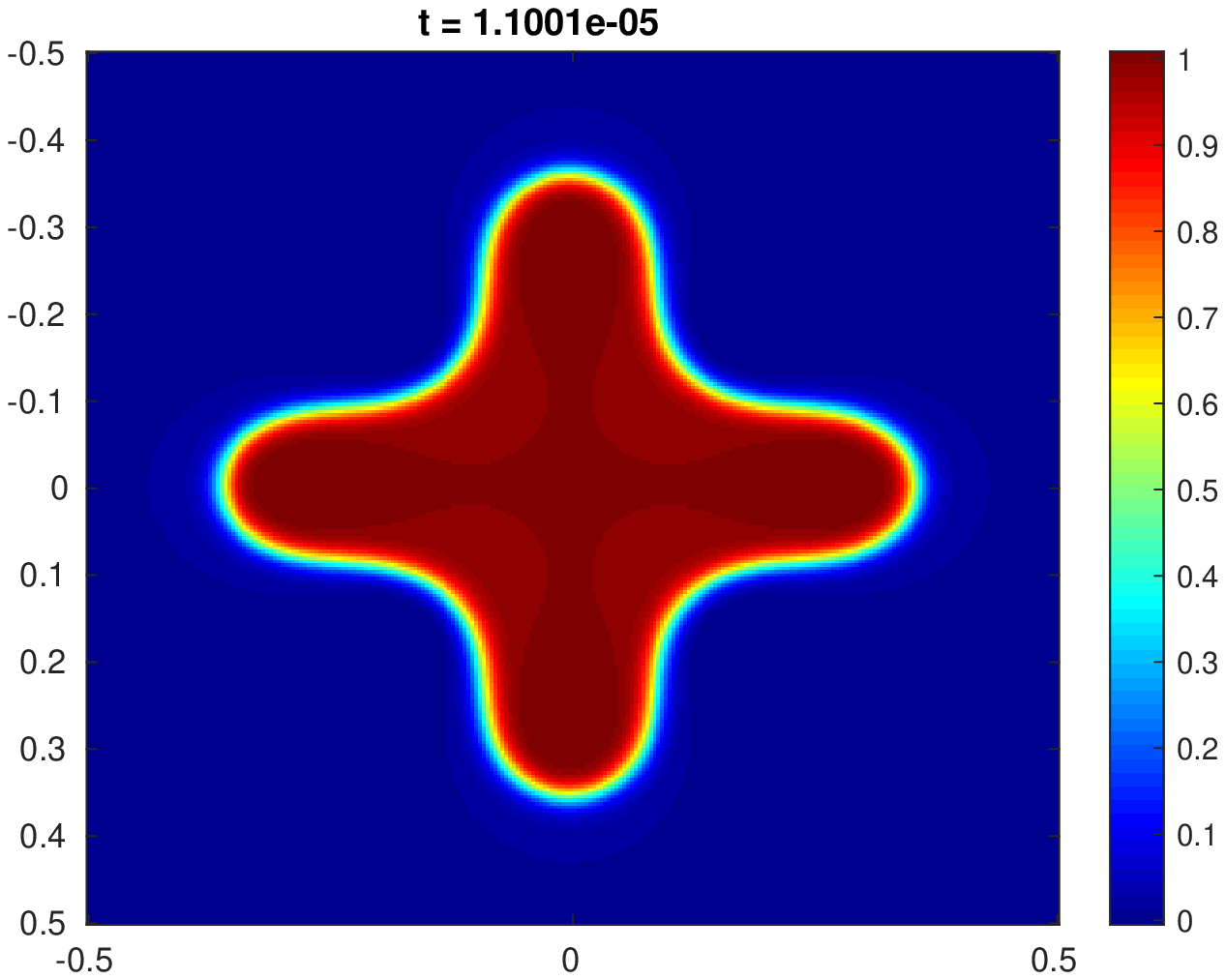}
	\includegraphics[width=3.5cm]{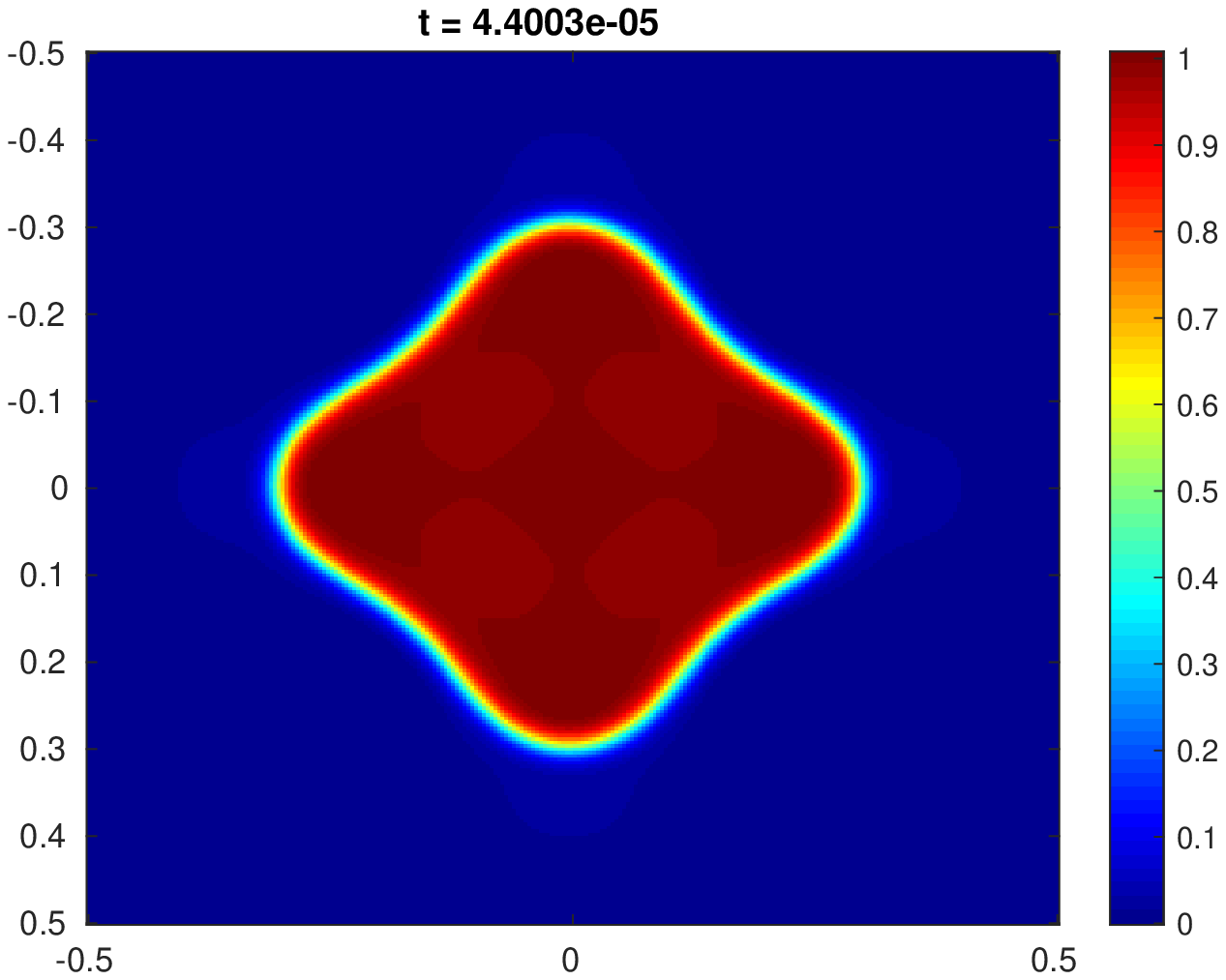}
	\includegraphics[width=3.5cm]{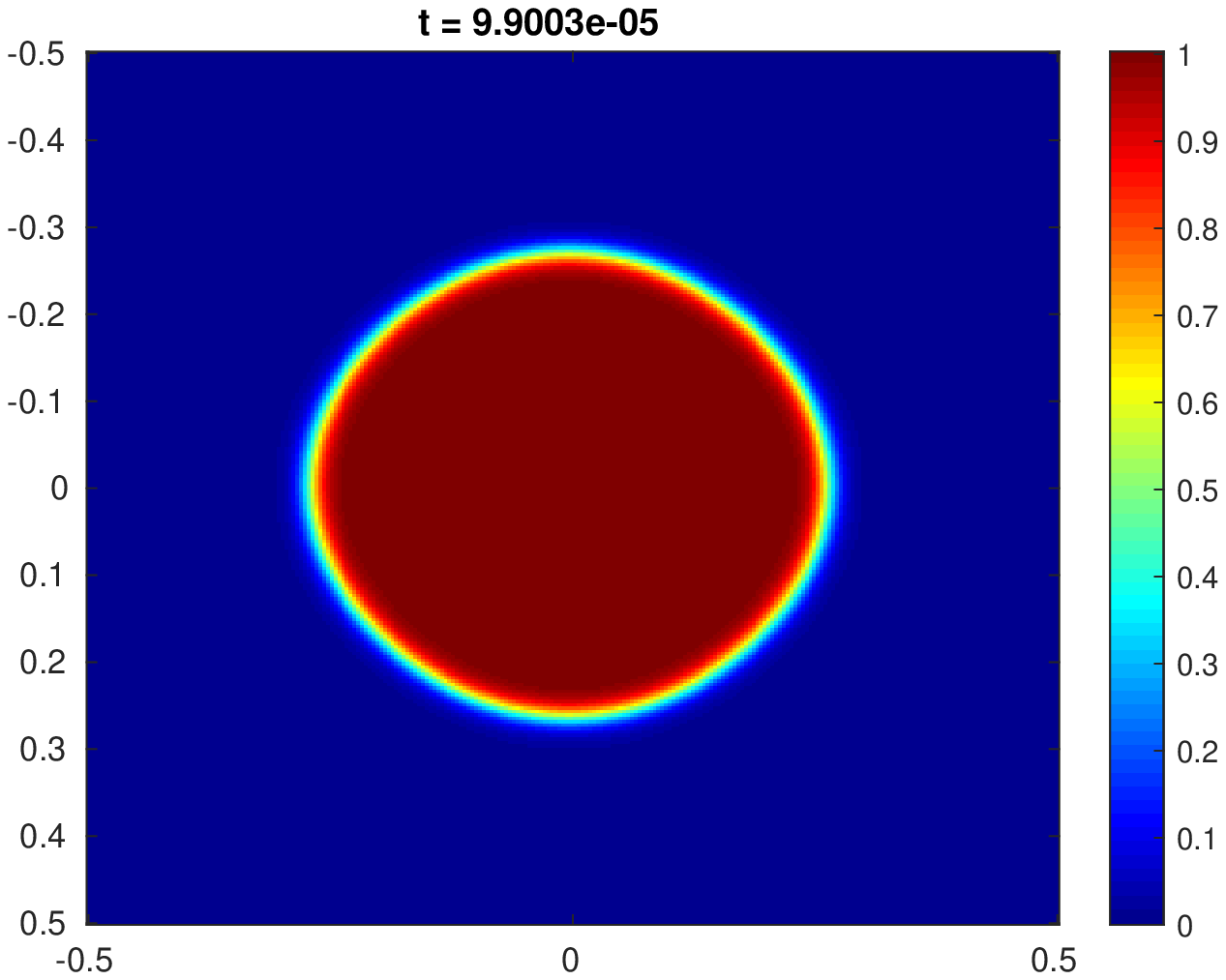} \\
	\includegraphics[width=3.5cm]{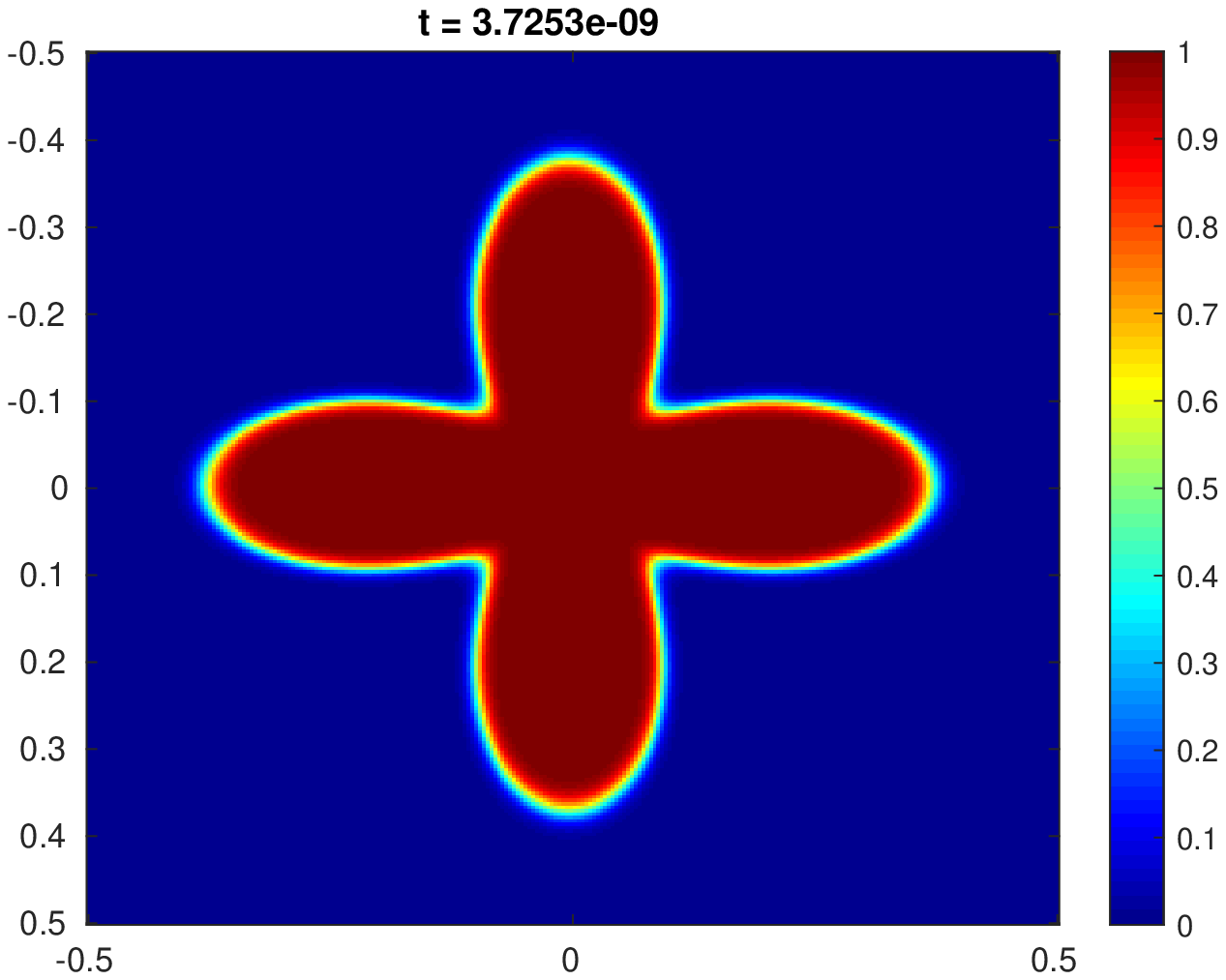}
	\includegraphics[width=3.5cm]{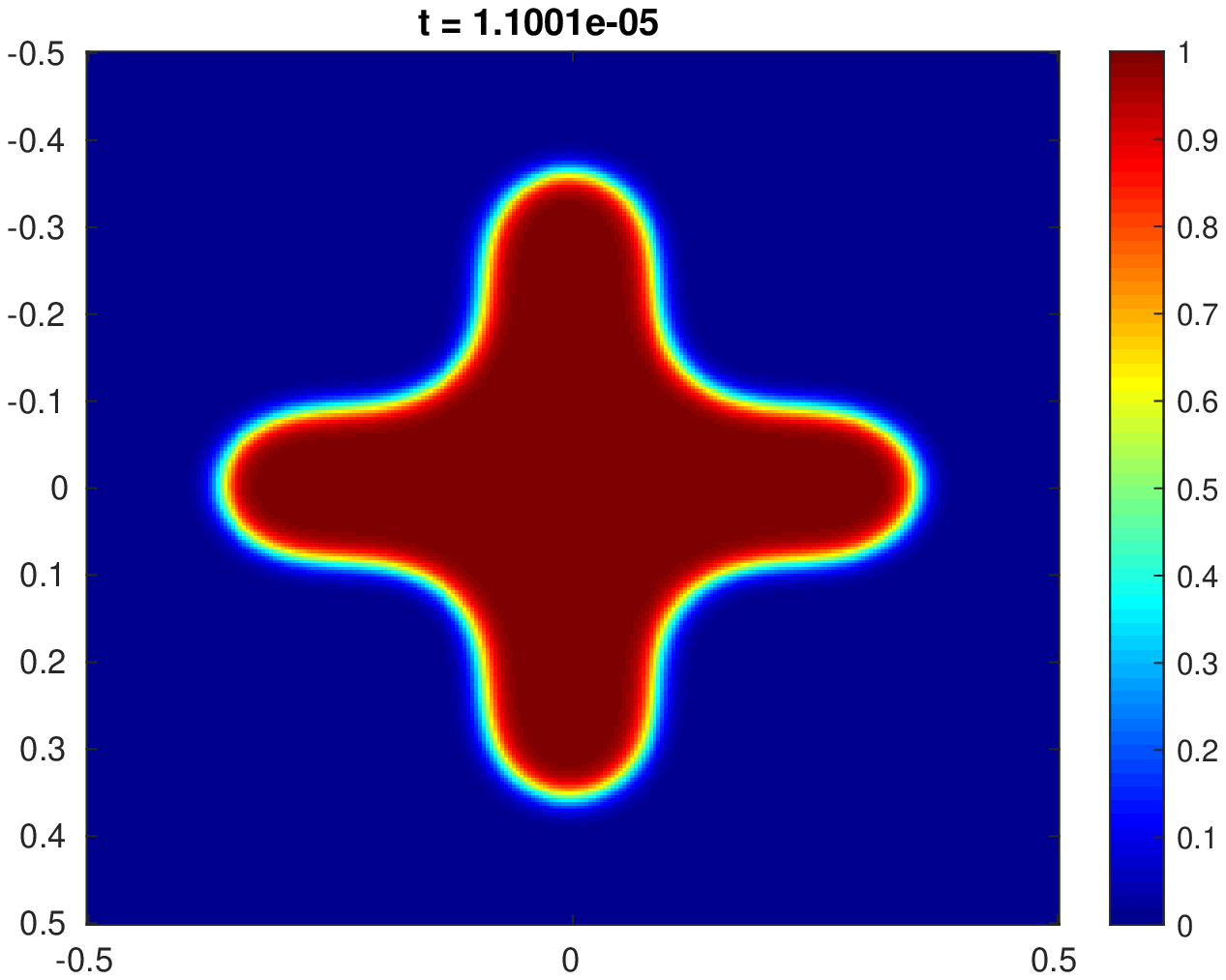}
	\includegraphics[width=3.5cm]{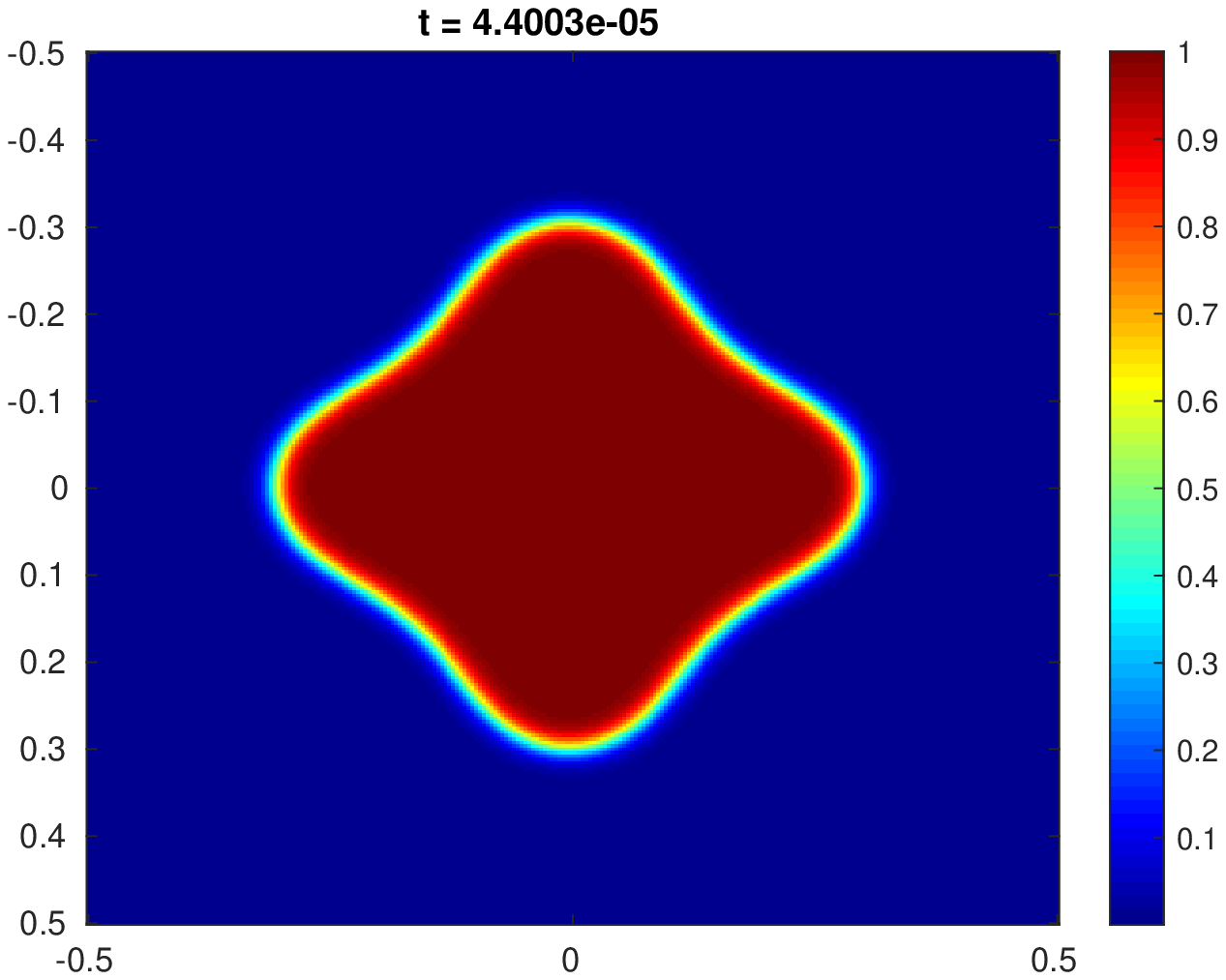}
	\includegraphics[width=3.5cm]{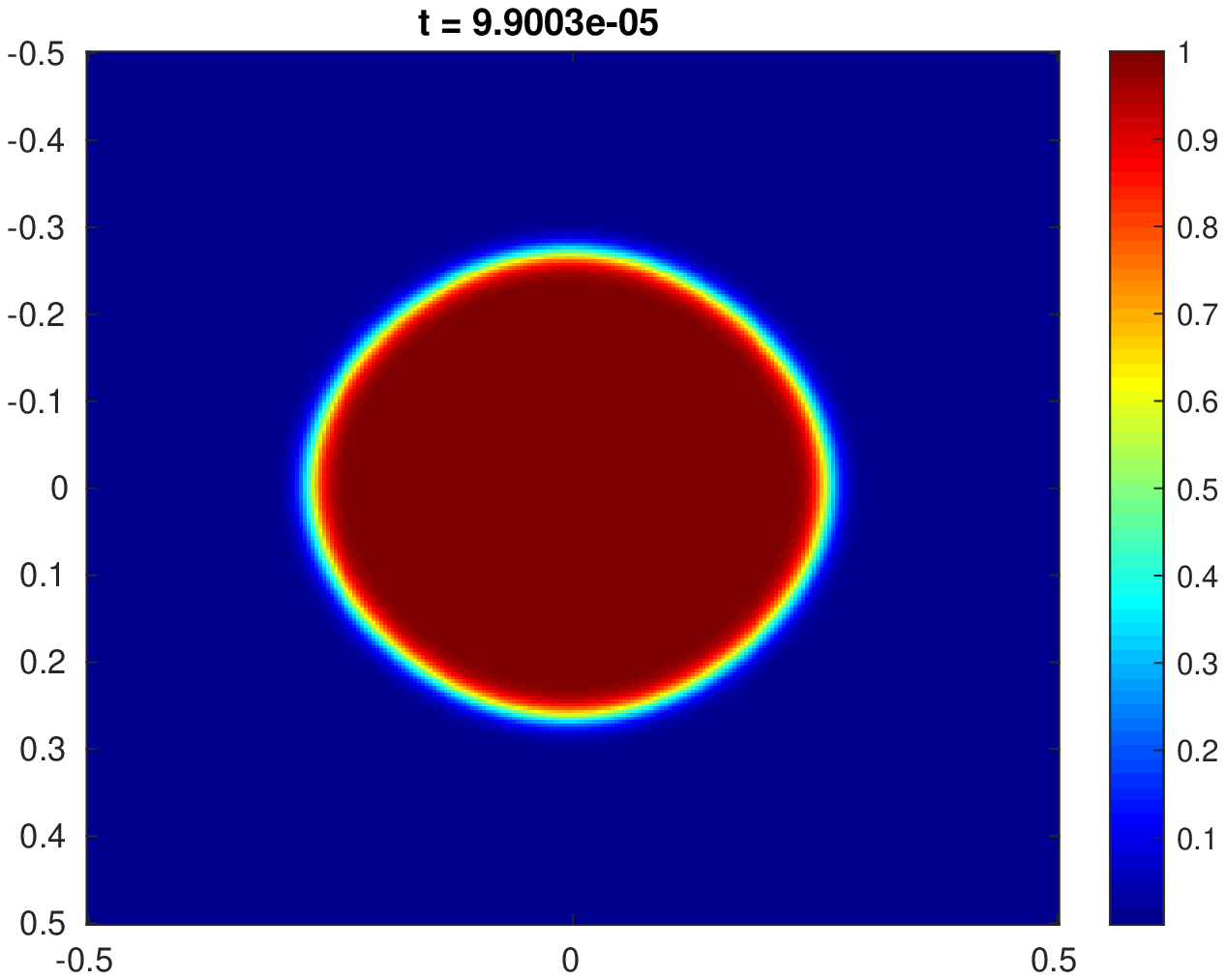}\\
\caption{Time evolution of the numerical phase field solutions of the \eqref{eq:MCH} model solved by \eqref{eq:SAV_MCH} (first row) and the \eqref{eq:NMN_CH}  model solved by \eqref{eq:SAV_NMNCH} (second row).}
\label{fig_test1}
\end{figure}

%

\begin{figure}[htbp]
\centering
		\includegraphics[height=4cm]{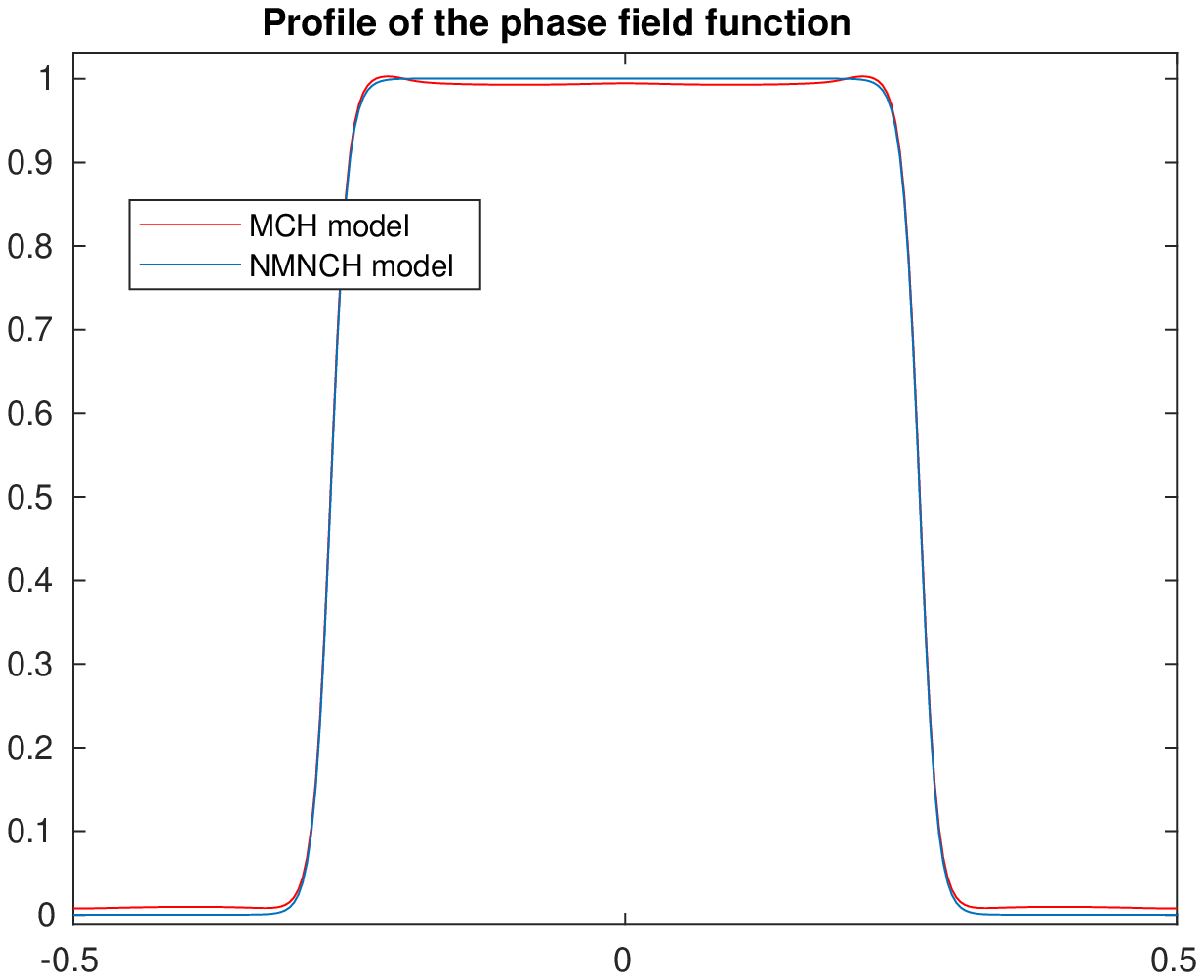}~
	\includegraphics[height=4cm]{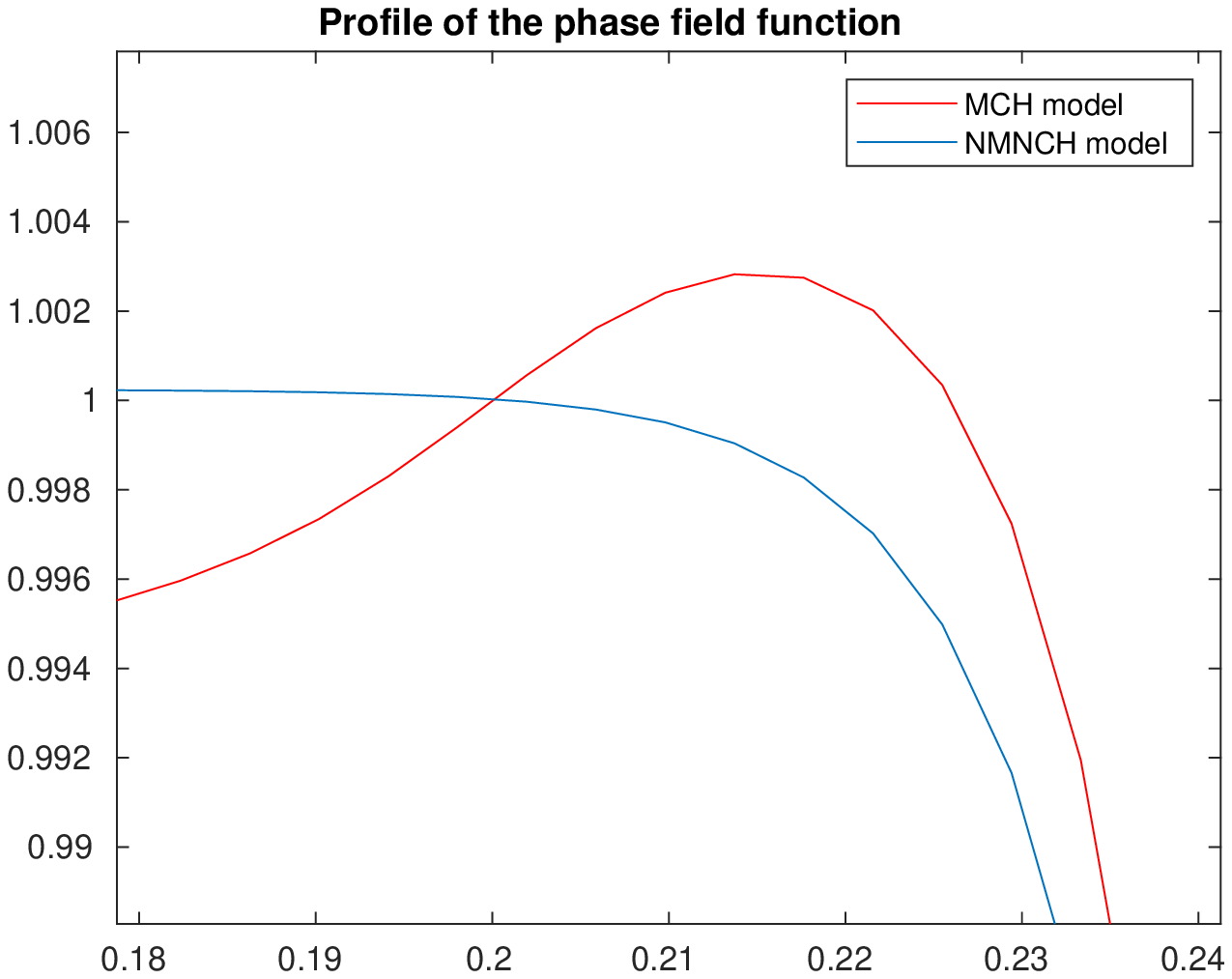}~
	\includegraphics[height=4cm]{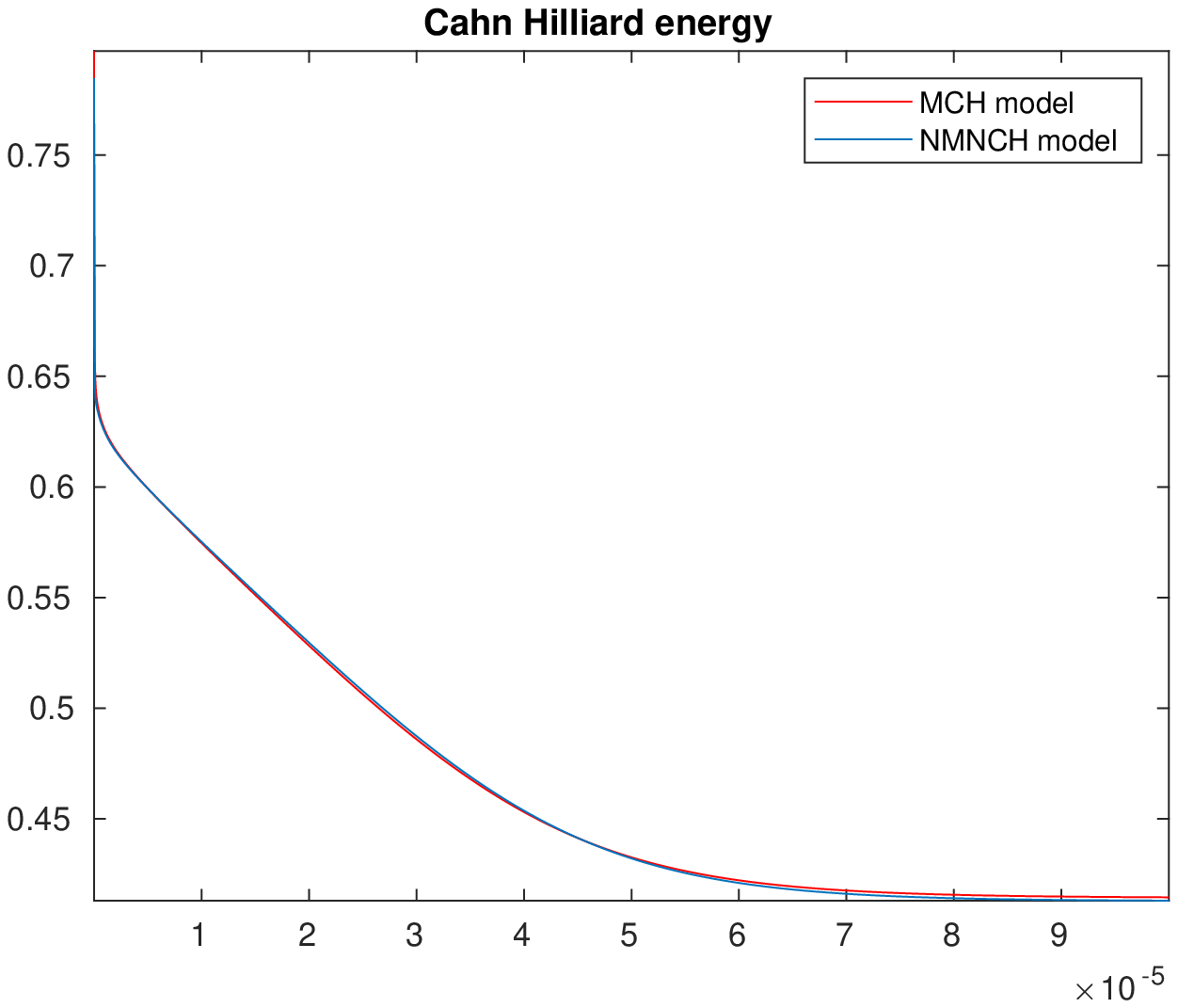}
\caption{Left: representation of the slice $x_1 \mapsto u^T(x_1,0)$ where $u^T$ is the numerical phase field solution at the final time to \eqref{eq:SAV_MCH} (in red) and \eqref{eq:SAV_NMNCH} (in blue). Middle:  Zoom of the right transition zone. Right: decrease of the Cahn-Hilliard energy for each model.}
\label{fig_test1_profile}
\end{figure}

\subsubsection{Evolution of a thin structure in dimension $3$}

Following \cite{bretin2020approximation}, we now validate the relevance of using the \eqref{eq:NMN_CH} model in comparison with \eqref{eq:MCH} for the evolution of thin structures. Indeed, the imprecise nature of the \eqref{eq:MCH} model results in volume losses of the order of $\varepsilon$, which are particularly significant when dealing with this type of problems. On the other hand, the \eqref{eq:NMN_CH} model suffers from these losses only at order  $O(\varepsilon^2)$, which seems to have a smaller impact on the surface diffusion flow approximation. A numerical validation of this is  presented in Figure \ref{fig_test3D_MCH} and  \ref{fig_test3D}, 
where the SAV schemes \eqref{eq:SAV_MCH} and \eqref{eq:SAV_NMNCH} are used instead of convex-concave splitting for  the treatment of the mobility. 
For the experiment showing the dewetting of a thin tube (Figures \ref{fig_test3D_MCH} and~\ref{fig_test3D}, top), we use  $\ell_1=1$, $\ell_2=\ell_3=0.25$, $N_1=2^8$, $N_2=N_3=2^6$.
For the experiment showing the dewetting of a thin plate (Figure~\ref{fig_test3D}, bottom), we use $\ell_1=\ell_2=1$, $\ell_3=0.25$, $N_1=N_2=2^8$, $N_3=2^6$. The other settings for both experiments are $\varepsilon = 2/N_1$, $\delta_t = \varepsilon^4$, $\alpha = 2/\varepsilon^2$, $m=1$, and $\beta= 2/\varepsilon^2$.  In terms of performance, the numerical approximation of the \eqref{eq:NMN_CH} is better with the SAV scheme \eqref{eq:SAV_NMNCH} in comparison with the convex-concave splitting \eqref{eq:Conv_concave1st} used in  \cite{bretin2020approximation}, in the sense that the latter shows volume losses, see in particular the central small ball which is preserved in Figure~\ref{fig_test3D} but has disappeared on Figure 8 in  \cite{bretin2020approximation}. This illustrates well the better accuracy of the SAV approach adapted to the \eqref{eq:NMN_CH} model.

\begin{figure}[htbp]
	\centering
	\includegraphics[width=3.5cm]{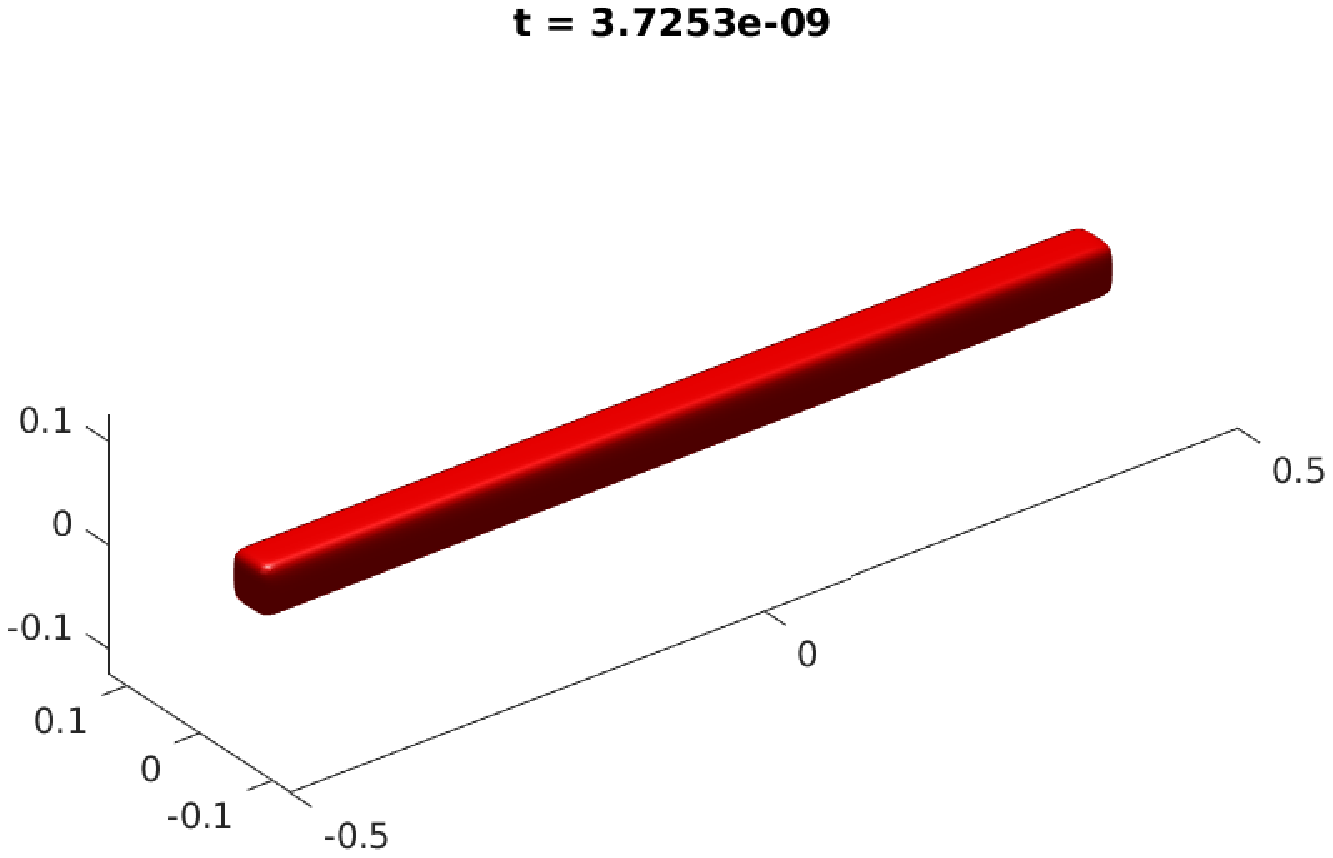}
	\includegraphics[width=3.5cm]{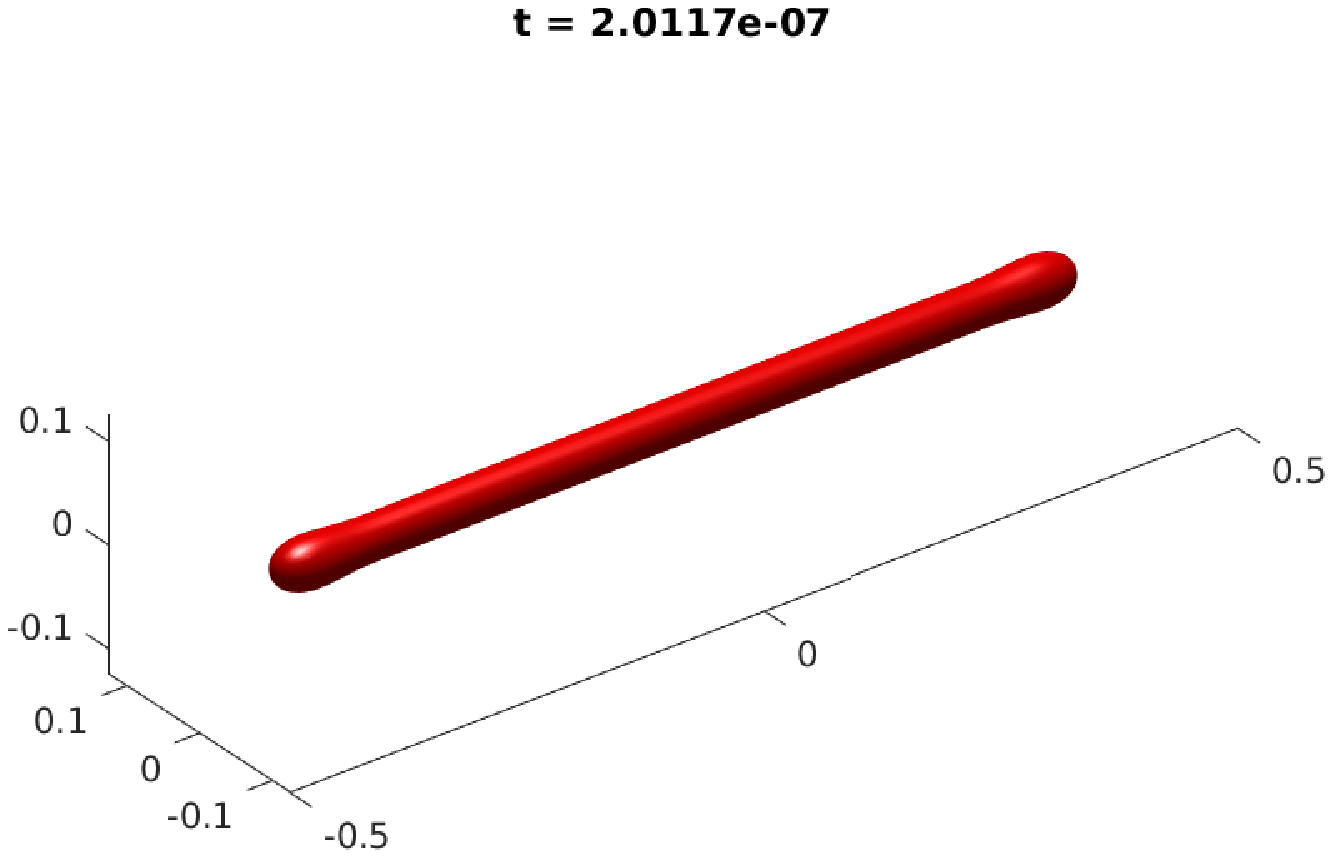}
	\includegraphics[width=3.5cm]{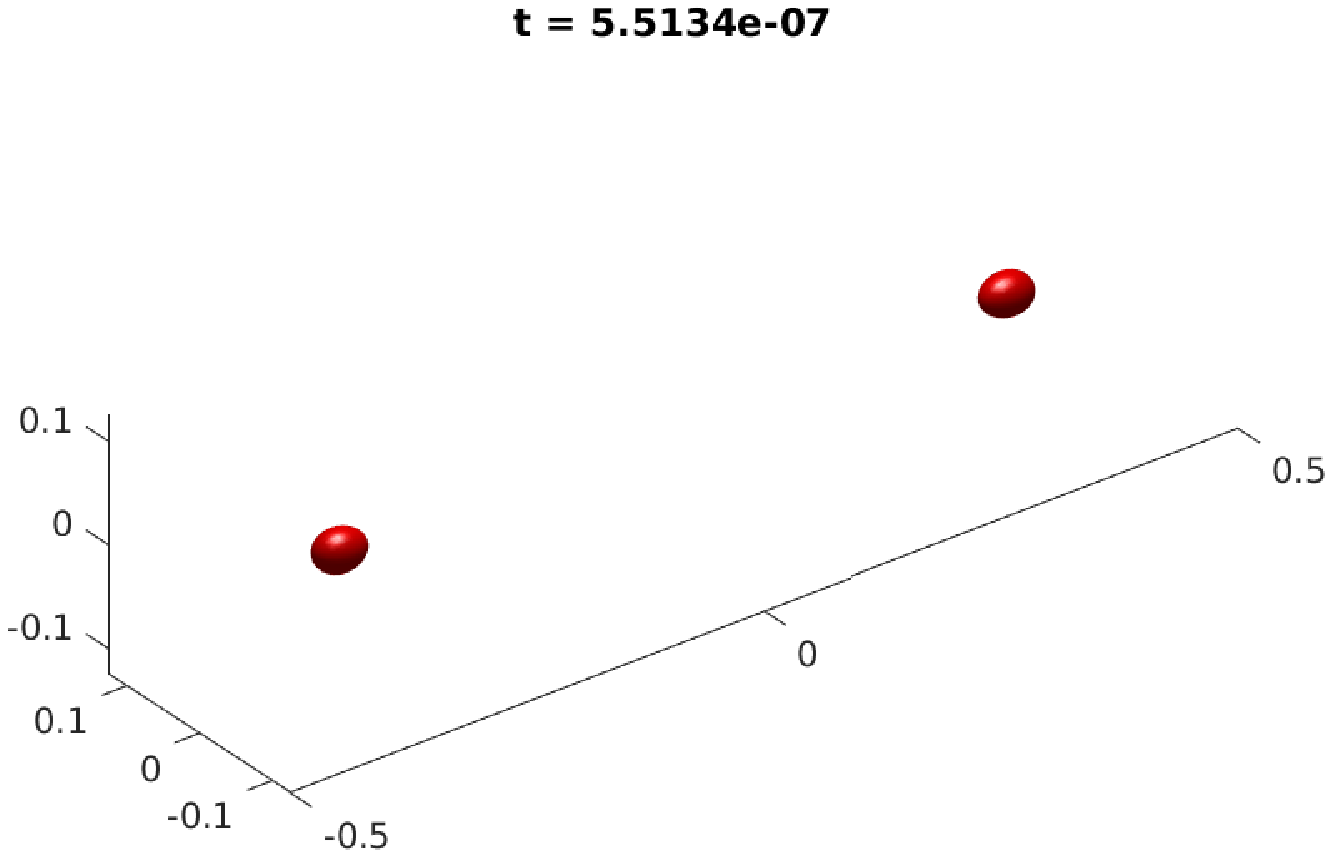}
	\includegraphics[width=3.5cm]{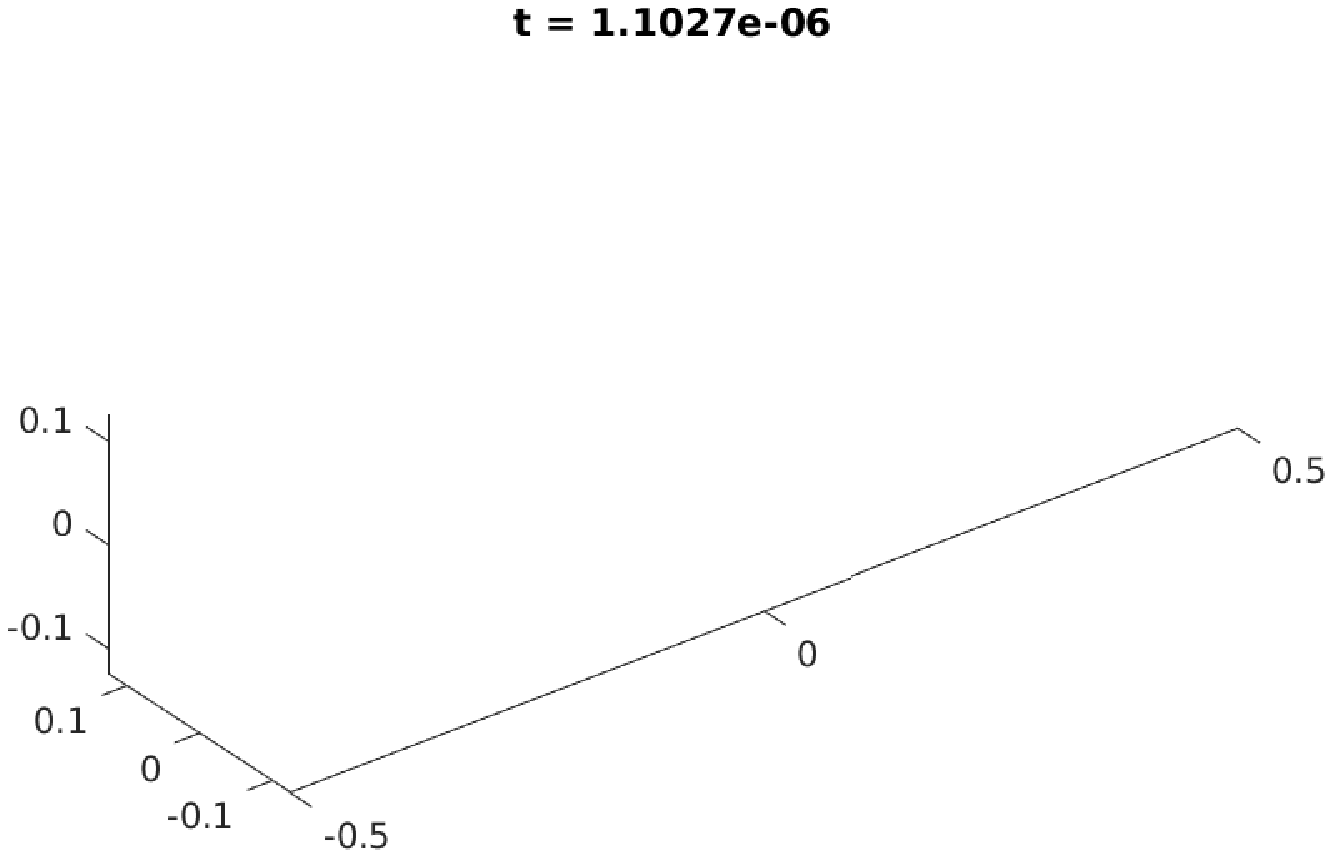} \\
	\caption{Example of dewetting in dimension $3$ using the \eqref{eq:MCH} model solved by \eqref{eq:SAV_MCH}. The figures show the evolution of $\{u_\varepsilon\leq 1/2\}$ over time.}
	\label{fig_test3D_MCH}
\end{figure}

\begin{figure}[htbp]
\centering
    \includegraphics[width=3.5cm]{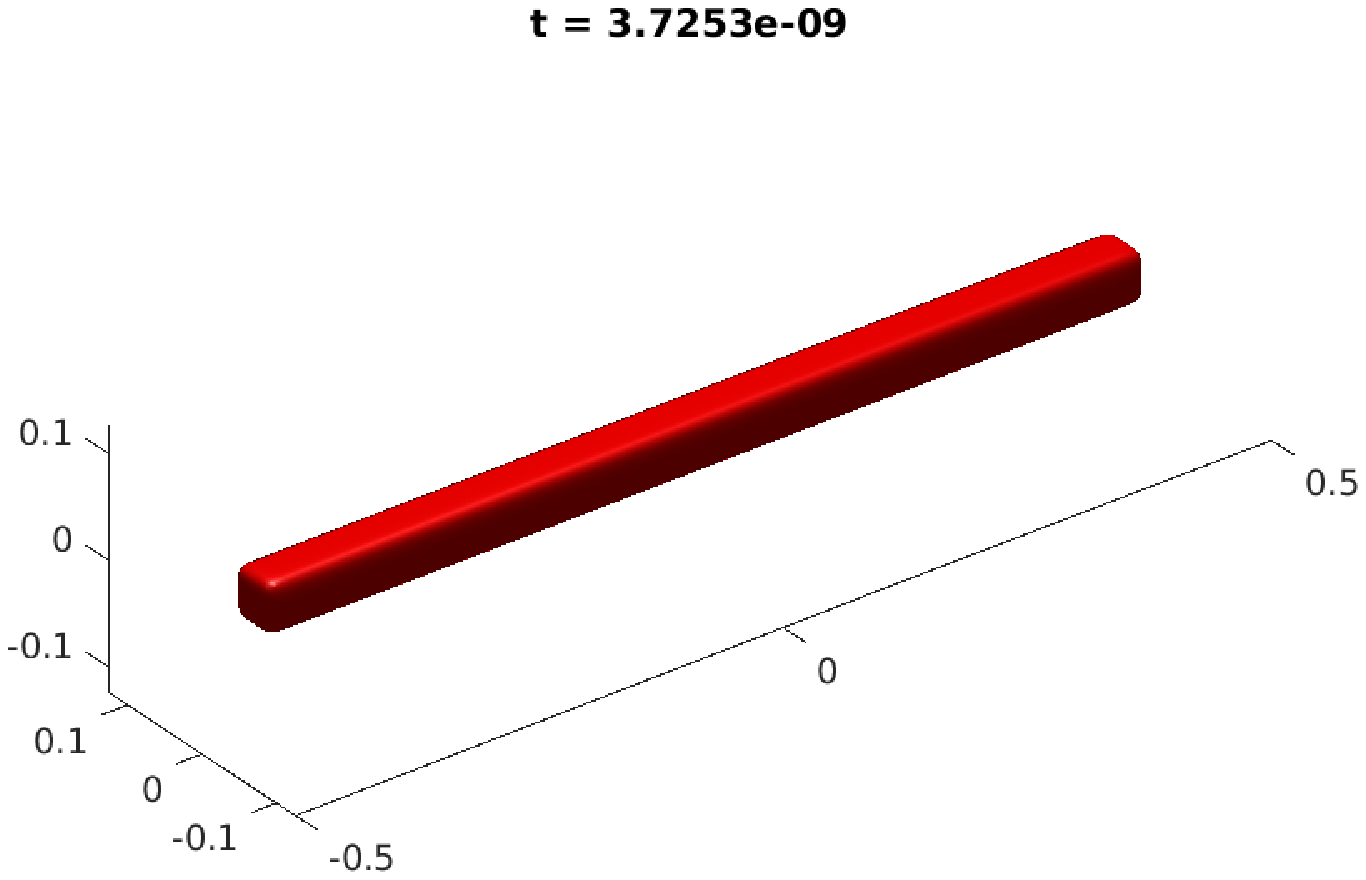}
	\includegraphics[width=3.5cm]{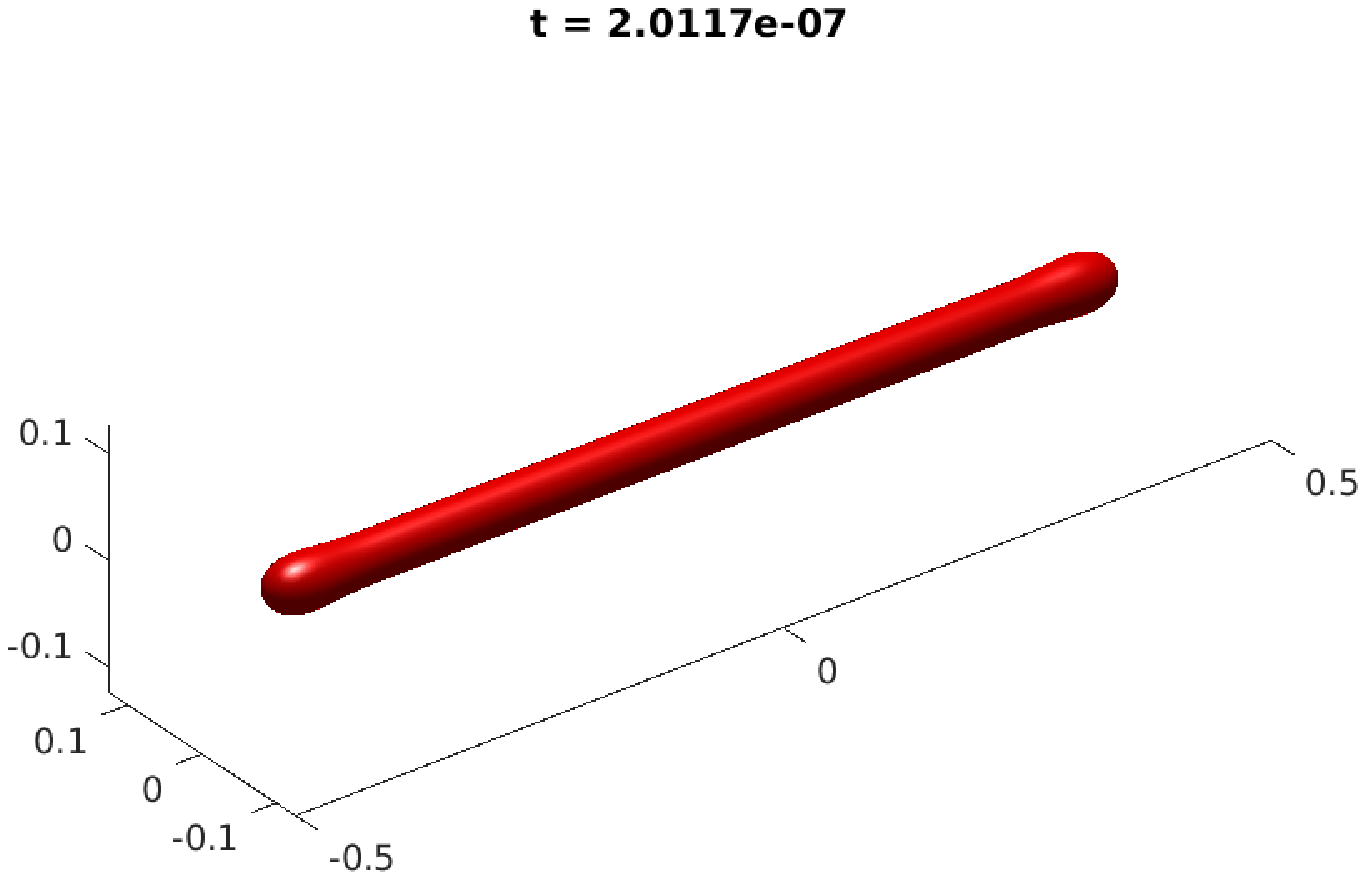}
	\includegraphics[width=3.5cm]{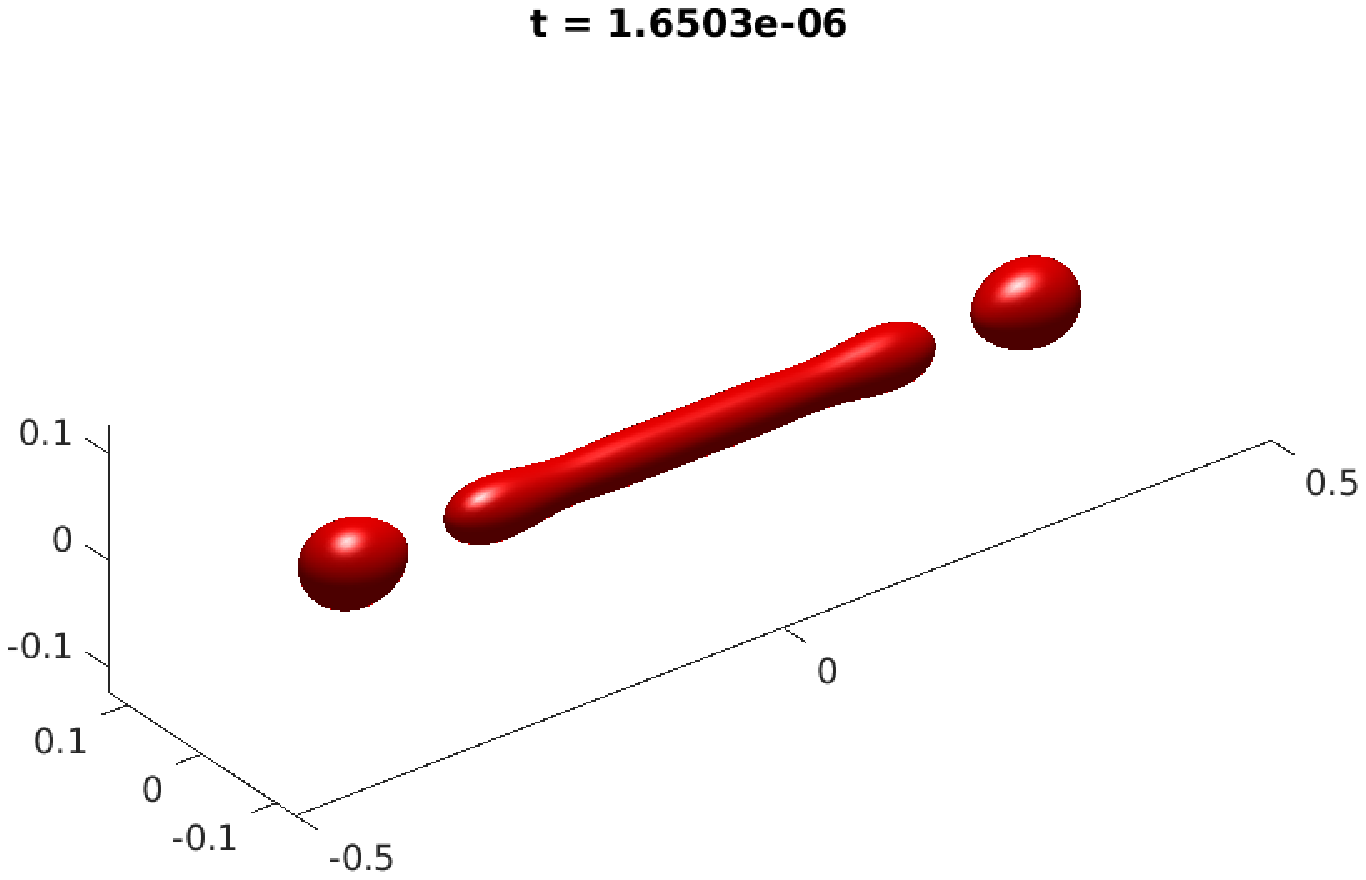}
	\includegraphics[width=3.5cm]{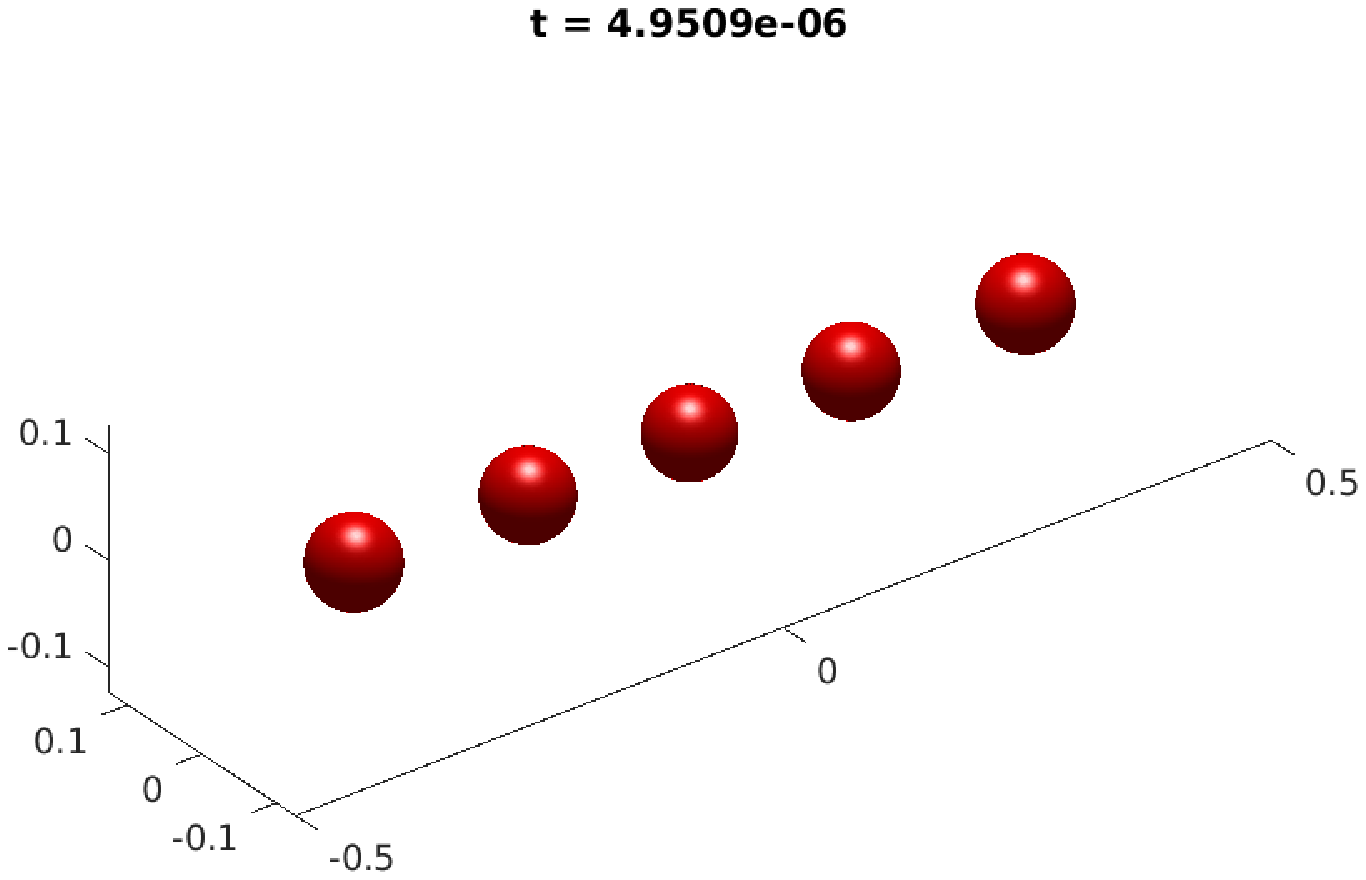} \\
	    \includegraphics[width=3.5cm]{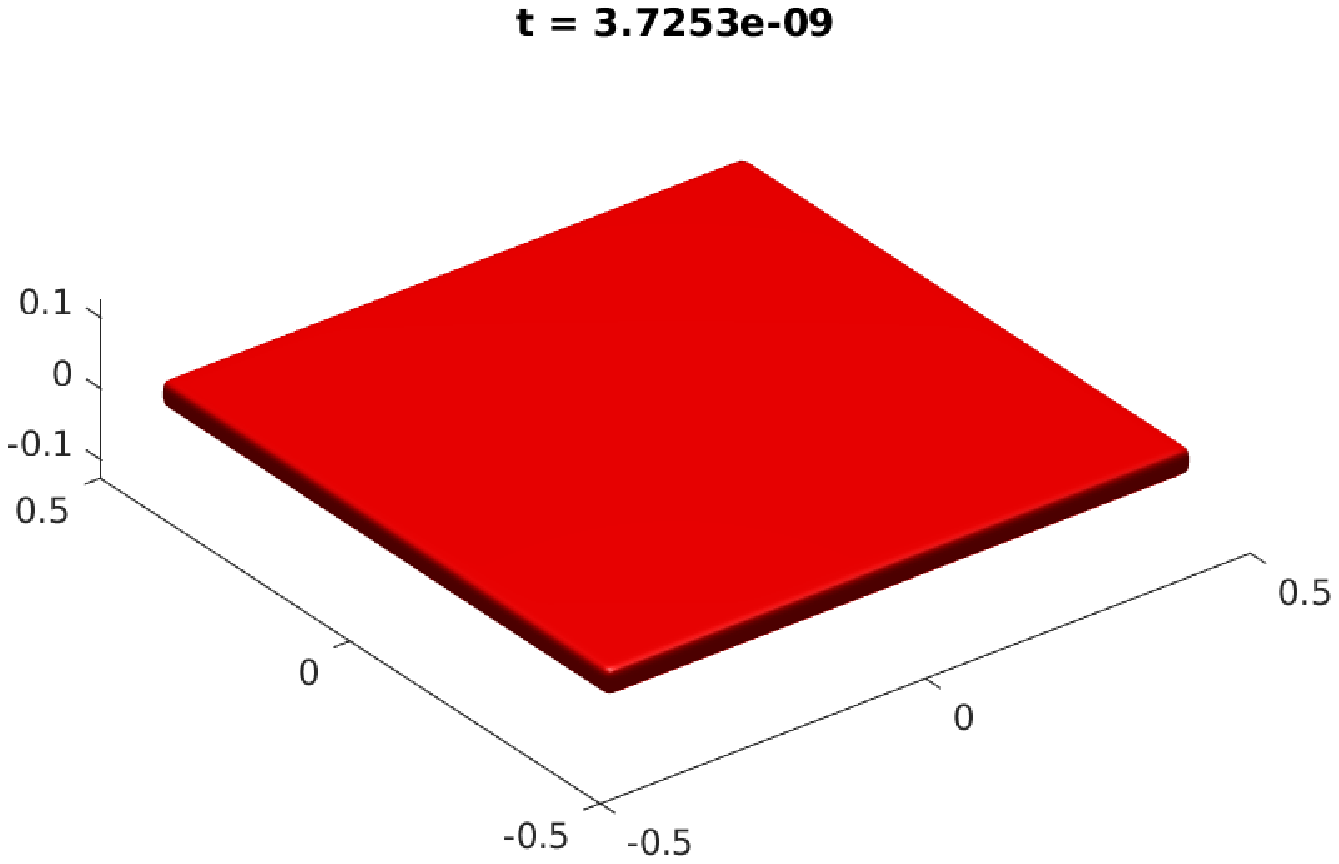}
	\includegraphics[width=3.5cm]{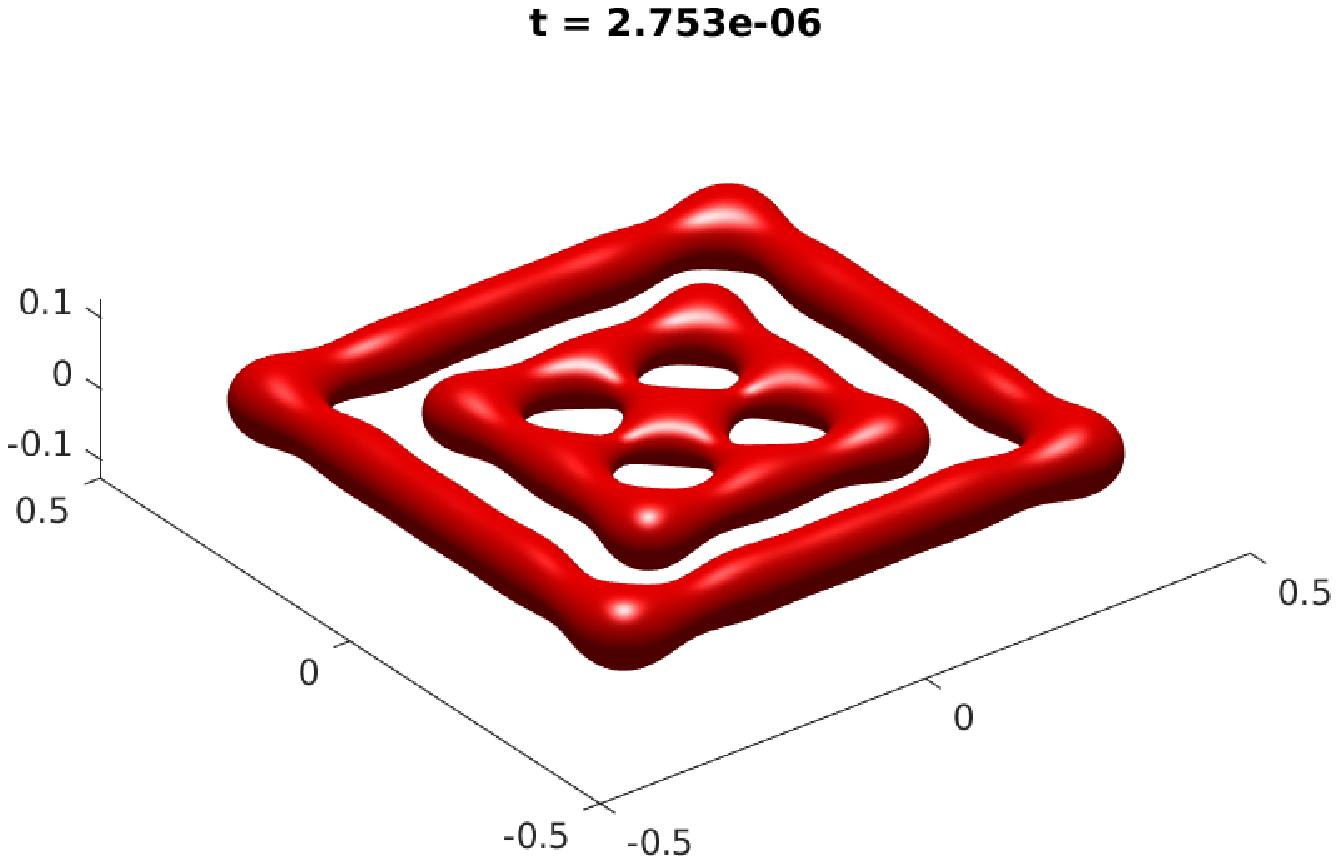}
	\includegraphics[width=3.5cm]{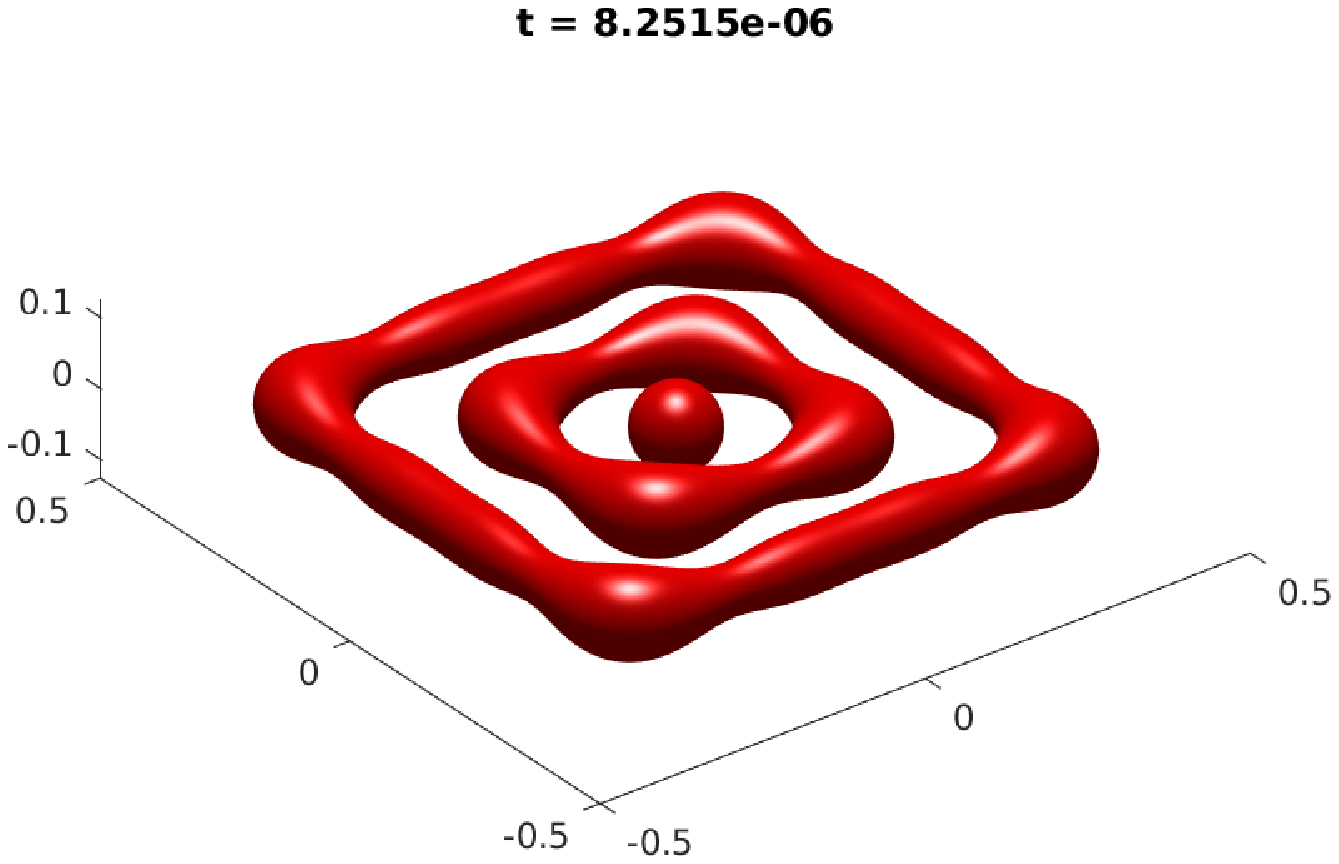}
	\includegraphics[width=3.5cm]{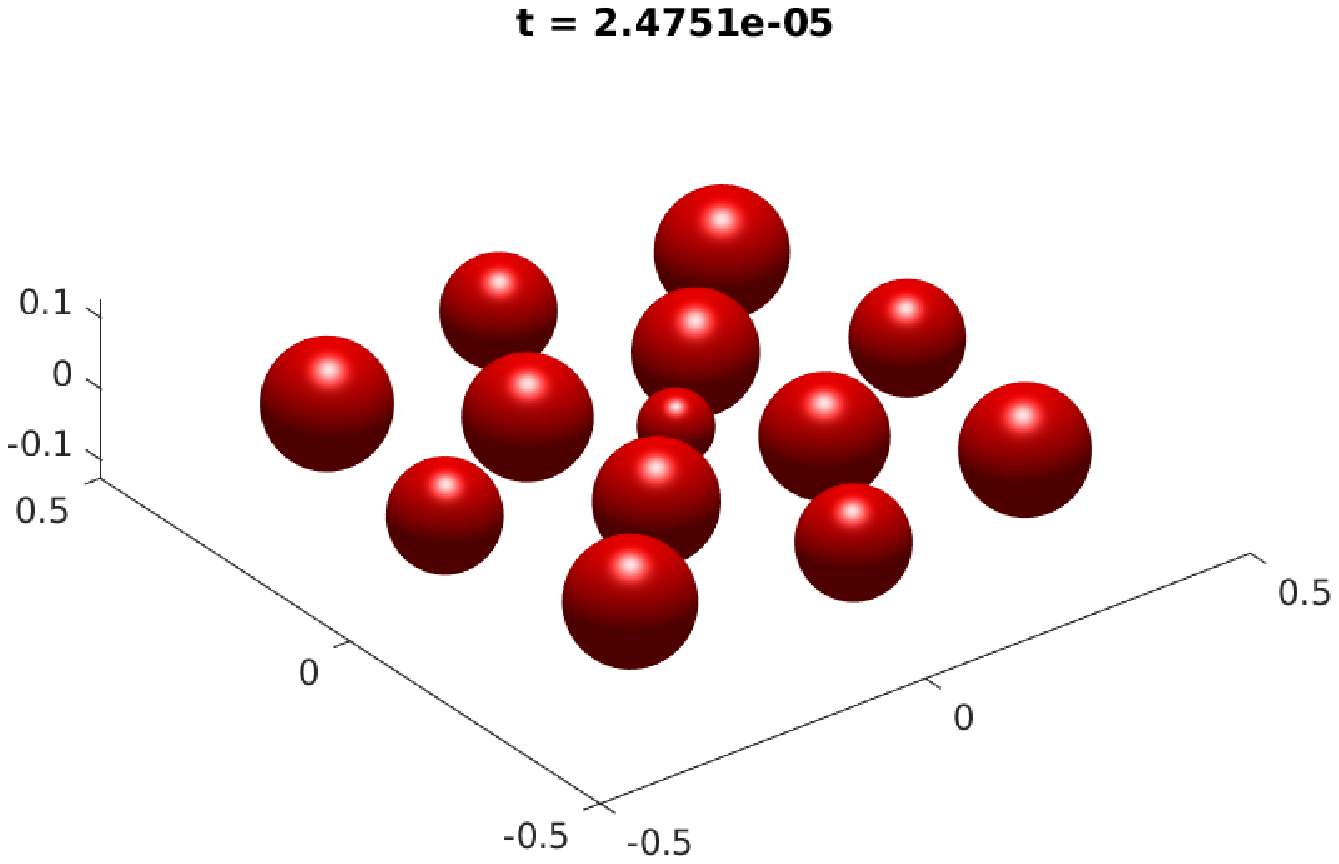} 
	\caption{Two examples of dewetting in dimension $3$ using the \eqref{eq:NMN_CH} model solved by \eqref{eq:SAV_NMNCH}. The figures show the evolution of $\{u_\varepsilon\leq 1/2\}$ over time.}
\label{fig_test3D}
\end{figure}

\section{Conclusion and perspectives}

We introduced, analysed and  validated numerically an SAV-type approach  providing first- and second-order numerical schemes the approximation of  solutions to fairly general gradient flows of convex functionals. 
We used this approach in the context of degenerate Cahn-Hilliard models by coupling the SAV relaxation with a convex-concave splitting of the associated energy. Theoretically, the resulting numerical scheme shows  unconditional stability and order-one accuracy. Numerically, this approach naturally leads to linear systems which can be efficiently solved in Fourier spaces and thus applies also to the approximation of surface diffusion flows in three dimensions.

Among the numerous perspectives to this work,  both in terms of applications and numerical modelling, let us mention for instance:
\begin{itemize}
	\item The design of a double SAV approach based on the splitting of both the energy and the mobility to obtain second-order schemes for general $J^{-1}$ gradient flows of non quadratic convex functionals;
	\item In the context of phase-field models, the adaptation of the method to deal with general (including crystalline) anisotropies or higher-order energies (such as the Willmore energy);
	\item The extension of the schemes to multiphase systems as in \cite{refId0}.
\end{itemize}

\section*{Acknowledgements}

EB and SM  acknowledge support from the French National Research Agency (ANR) under grants ANR-18-CE05-0017 (project BEEP) and ANR-19-CE01-0009-01 (project MIMESIS-3D).
LC acknowledges the support received by the ANR-22-CE48-0010 (project TASKABILE).  Part of this work was also supported by the LABEX MILYON (ANR-10-LABX-0070) of Universit\'e de Lyon, within the program "Investissements d'Avenir" (ANR-11-IDEX- 0007) operated by the French National Research Agency (ANR), and by the European Union Horizon 2020 research and innovation programme under the Marie Sklodowska-Curie grant agreement No 777826 (NoMADS).


\end{document}